\documentclass[10pt,reqno]{amsart}
\usepackage[colorlinks=true,
pdfstartview=FitV, linkcolor=cyan, citecolor=magenta,
urlcolor=]{hyperref}
\usepackage[dvipsnames]{xcolor}
\usepackage{amsmath,amsfonts,latexsym,amssymb}
\usepackage{amssymb,amsfonts, amsmath}
\usepackage{mathrsfs}
\usepackage[latin1]{inputenc}
\usepackage[T1]{fontenc}
\usepackage{comment}
 \usepackage{mathtools} 
\usepackage[all,arc]{xy}
\usepackage{enumerate}
\usepackage{mathrsfs}
\usepackage{mathabx}
\usepackage{amssymb}
\usepackage{graphicx}
\usepackage{subfig}
\usepackage{verbatim} 
\usepackage[all,arc]{xy}
\usepackage{enumerate}
\usepackage{mathrsfs}
\usepackage[bb = fourier,cal = euler,scr = rsfs]{mathalfa}	
\usepackage{hyperref}
\usepackage{amsmath}
\usepackage[makeroom]{cancel}
\usepackage{todonotes}
\makeatletter
\newsavebox{\@brx}
\newcommand{\llangle}[1][]{\savebox{\@brx}{\(\m@th{#1\langle}\)}%
  \mathopen{\copy\@brx\mkern2mu\kern-1\wd\@brx\usebox{\@brx}}}
\newcommand{\rrangle}[1][]{\savebox{\@brx}{\(\m@th{#1\rangle}\)}%
  \mathclose{\copy\@brx\mkern2mu\kern-1\wd\@brx\usebox{\@brx}}}
\makeatother
\newtheorem{theorem}{Theorem}[section]
\newtheorem{lemma}[theorem]{Lemma}
\newtheorem{proposition}[theorem]{Proposition}
\newtheorem{corollary}[theorem]{Corollary}

\newtheorem{remark}[theorem]{\normalfont{\em{Remark}}}
\newtheorem{Exmp}[theorem]{\normalfont{\em{Example}}}

\renewcommand{\leq}{\leqslant}
\renewcommand{\geq}{\geqslant}

\usepackage{braket}
\usepackage{amsmath}
\usepackage[title]{appendix}
\makeatletter
\renewcommand*\env@matrix[1][\arraystretch]{%
\edef\arraystretch{#1}%
\hskip -\arraycolsep
\let\@ifnextchar\new@ifnextchar
\array{*\c@MaxMatrixCols c}}
\makeatother
\title{Matrix entries, unipotents, and linearity of amalgams}
\date{\today}

\author{Sami Douba}
\address{Mathematisches Institut der Universit\"at Bonn, Endenicher Allee 60, 53115 Bonn, Germany}
\email{douba@math.uni-bonn.de}

\author{Konstantinos Tsouvalas}
\address{Max Planck Institute for Mathematics in the Sciences, Inselstrasse 22, 04103 Leipzig, Germany}
\email{konstantinos.tsouvalas@mis.mpg.de}

\begin{document}

\frenchspacing

\maketitle

\begin{abstract} 
We investigate linearity of amalgams of subgroups of algebraic groups along intersections with algebraic subgroups. In the process, we establish linearity of certain ``doubles'' of linear groups, and obtain new examples of finitely generated residually finite groups that fail to be linear.
\end{abstract}

\section{Introduction}
An abstract group $\Gamma$ is {\em linear over} an integral domain $\mathsf{R}$ (or is {\em $\mathsf{R}$-linear})  if $\Gamma$ embeds in $\mathsf{GL}_n(\mathsf{R})$ for some $n \in \mathbb{N}$. One says $\Gamma$ is {\em linear} if $\Gamma$ is linear over some integral domain. The goal of the current paper is to isolate contexts in which certain groups constructed from linear groups remain, or cease to be, linear.

It has been known since the 40's that a free product of two linear groups is linear \cite{Nisnevich, Shalen}. Determining the smallest integral domain over which a free product of two linear groups remains linear is however a much subtler problem. For example, it was only recently established that a free product of two $\mathbb{Z}$-linear groups is $\mathbb{Z}$-linear \cite[Cor.~1.10]{danciger2024combination}.

On the other hand, linearity of an amalgam of two linear groups may fail dramatically. For example, if~$\Lambda$ is a subgroup of a residually finite group $\Gamma$, then the {\em double $\Gamma \ast_\Lambda \Gamma$ of $\Gamma$ along~$\Lambda$}, that is, the amalgam given by the inclusion into either factor, is residually finite if and only if~$\Lambda$ is separable in $\Gamma$. (Recall that $\Lambda$ is said to be {\em separable} in $\Gamma$ if $\Lambda$ is an intersection of finite-index subgroups of $\Gamma$, and that $\Gamma$ is said to be {\em residually finite} if the trivial subgroup of $\Gamma$ is separable.) On the other hand, finitely generated linear groups are residually finite by Mal'cev's theorem~\cite{Malcev}. It is then not difficult to deduce from the Tits alternative~\cite{MR286898}, together with the abundance of finitely generated groups that are not residually finite \cite{MR2029029}, that any finitely generated linear group that is not virtually solvable possesses uncountably many subgroups the doubles along which fail to be residually finite, let alone linear.

If $\mathsf{R}$ is a finitely generated integral domain and $\mathsf{G}$ is an algebraic subgroup of $\mathsf{GL}_n$, then $\mathsf{G}(\mathsf{R})$ is separable in $\mathsf{GL}_n(\mathsf{R})$; see \cite{MR1769939, MR898729}. In particular, if $\mathsf{R}$ is an arbitrary integral domain and $\Gamma < \mathsf{GL}_n(\mathsf{R})$ is finitely generated, then any intersection $\Lambda$ of $\Gamma$ with an algebraic subgroup of $\mathsf{GL}_n(\mathsf{R})$ is separable in $\Gamma$. One may then ask whether there are examples of such pairs $(\Gamma, \Lambda)$ for which the double $\Gamma \ast_{\Lambda} \Gamma$ nevertheless fails to be linear. Such examples were supplied by Wehrfritz \cite[Cor.~2.4]{Wehrfritz} and rediscovered by Dru\textcommabelow{t}u--Sapir~\cite{MR2115010}. In the present work, we use the superrigidity theorems of Margulis \cite{Mar}, Corlette \cite{Corlette}, and Gromov--Schoen \cite{GS} to provide new examples via the following theorem.

\begin{theorem}\label{negative}
Let $\mathsf{G}$ be a semisimple real algebraic group with no compact factors that is not locally isomorphic to $\mathsf{O}(n,1)$ or $\mathsf{U}(n,1)$ for any $n\in \mathbb{N}$, and let $\Gamma < \mathsf{G}$ be an irreducible lattice. If~$\Lambda$ is an infinite-index subgroup of $\Gamma$ that is not cocompact in the Zariski-closure of $\Lambda$ in $\mathsf{G}$, then the double $\Gamma \ast_\Lambda \Gamma$ is not linear.
\end{theorem}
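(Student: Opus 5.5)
Suppose for contradiction that $\Delta = \Gamma \ast_\Lambda \Gamma$ embeds in $\mathsf{GL}_m(\mathsf{K})$ for some field $\mathsf{K}$; we may take $\mathsf{K}$ algebraically closed. Let $\rho_1, \rho_2 \colon \Gamma \to \mathsf{GL}_m(\mathsf{K})$ be the restrictions of this embedding to the two factors. They are injective and agree on $\Lambda$. The plan is to use superrigidity to show that $\rho_1$ and $\rho_2$ are ``essentially'' determined by their restriction to $\Lambda$. From this we will derive a nontrivial identification between the factors that is incompatible with injectivity of the amalgam.

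\textbf{Step 1: reduce to $\mathsf{K} = \mathbb{C}$ and invoke superrigidity.} Since $\Gamma$ is finitely generated, the image lies in $\mathsf{GL}_m$ of a finitely generated domain, so the positive-characteristic case and non-archimedean completions must be handled. The hypothesis excludes $\mathsf{O}(n,1)$ and $\mathsf{U}(n,1)$, so $\Gamma$ is superrigid by Margulis (higher rank) or by Corlette and Gromov--Schoen (rank one $\mathsf{Sp}(n,1)$, $\mathsf{F}_4^{-20}$). Consequently:
\begin{itemize}
\item[(a)] $\Gamma$ has no infinite-image representation in positive characteristic;
\item[(b)] every representation into $\mathsf{GL}_m$ over a non-archimedean field has bounded image;
\item[(c)] every complex representation $\rho$ virtually extends to a rational representation of $\mathsf{G}$, twisted by Galois conjugation of the defining field, times a representation with bounded image.
\end{itemize}
Since $\rho_i$ is faithful and $\Gamma$ is infinite, (a) forces characteristic zero, and we embed the finitely generated field in $\mathbb{C}$.

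\textbf{Step 2: detect the unipotents in $\Lambda$.} Here the non-cocompactness hypothesis enters. Let $\mathsf{H}$ be the Zariski closure of $\Lambda$ in $\mathsf{G}$. If $\Lambda$ is not cocompact in $\mathsf{H}$, I expect to extract an element, or at least an unbounded sequence, of $\Lambda$ that is ``unipotent-like.'' Concretely, if $\Gamma$ is non-uniform, I would aim for a nontrivial unipotent in $\Lambda$ (the title's ``unipotents''). More generally, I would use the matrix-entry growth of a sequence in $\Lambda$ escaping in $\mathsf{H}$. The point is that under $\rho_i$, the superrigid extension $\tilde\rho_i$ of $\mathsf{G}$ sends these elements to elements with the same Zariski-closure behavior. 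So the two extensions $\tilde\rho_1$ and $\tilde\rho_2$ of the identity components agree on $\mathsf{H}$ modulo the bounded parts.

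\textbf{Step 3: produce a relation and derive the contradiction.} Having $\rho_1 = \rho_2$ on $\Lambda$, I would exploit that $\Lambda$ has infinite index, using the standard dichotomy for Zariski-dense subgroups of simple groups. Pass to a finite-index subgroup $\Gamma'$ on which the virtual extensions are defined, and consider $\rho_1(\Gamma')$ and $\rho_2(\Gamma')$ inside the Zariski closure of the image of $\Delta$. The image of the amalgam contains both, and they share the image of the noncompact part of $\mathsf{H}$. I expect the argument to go through Zariski closures. The closure $\mathsf{L}$ of $\rho(\Delta)$ contains the images of $\mathsf{G}$ via $\tilde\rho_1$ and $\tilde\rho_2$. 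One then argues, e.g.\ via Margulis normal subgroup/commensurator-type arguments, that these two images generate something in which $\rho_1(\gamma)$ and $\rho_2(\gamma)$ coincide, or commute, for some $\gamma \notin \Lambda$. That is a relation that fails in $\Gamma \ast_\Lambda \Gamma$ by normal forms, since elements of $\Gamma\setminus\Lambda$ from the two factors neither coincide nor commute.

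\textbf{Main obstacle.} The hard step is Step 3. From ``the superrigid extensions agree on a noncompact algebraic subgroup $\mathsf{H}^\circ$'' one must conclude that they agree on a strictly larger set, such as the group generated by $\mathsf{H}^\circ$ and its conjugates or $\mathsf{G}$ itself. To do this I would use that the algebraic homomorphisms $\tilde\rho_1,\tilde\rho_2$ are, up to Galois twist and bounded part, conjugate compositions of projections. Agreement on an unbounded subgroup pins down the Galois twist and the conjugating element up to the centralizer of $\tilde\rho_1(\mathsf{H})$. I would then need a separate argument for the case where this centralizer is large.
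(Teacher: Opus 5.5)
\textbf{There is a genuine gap in Step 3.} The conclusion of your Steps 1--2 is fine: the virtual superrigid extensions $\overline{\rho}_1,\overline{\rho}_2$ agree on a finite-index subgroup of $\mathsf{H}=\overline{\Lambda}^{\mathrm{Zar}}$. But Step 3, which you flag as the main obstacle, is where the whole proof lives, and the route you sketch would not get there.

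First, agreement on $\mathsf{H}$ does not make $\overline{\rho}_2$ a conjugate of $\overline{\rho}_1$ by an element centralizing $\overline{\rho}_1(\mathsf{H})$. Take $\mathsf{G}=\mathsf{SL}_3(\mathbb{R})$ and $\mathsf{H}=\mathsf{SO}(J)$. The representations $x\mapsto x$ and $x\mapsto J^{-1}(x^t)^{-1}J$ agree on $\mathsf{H}$, yet they are inequivalent. This pair is a natural candidate representation of, e.g., $\mathsf{SL}_3(\mathbb{Z})\ast_{\mathsf{SO}(f;\mathbb{Z})}\mathsf{SL}_3(\mathbb{Z})$ with $f$ isotropic over $\mathbb{Q}$ and Gram matrix $J$.

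Second, the relation you aim for is the wrong one. Coincidence of $\rho_1(\gamma)$ and $\rho_2(\gamma)$ for some $\gamma\notin\Lambda$ is what you get when $\Lambda$ is small inside $\Gamma\cap\mathsf{H}$: pick $\gamma\in(\Gamma\cap\mathsf{H})\smallsetminus\Lambda$. That is the infinite-covolume situation, which the paper disposes of at the outset via \cite[Cor.~5.3]{TT-IMRN}. In the remaining case, where $\Lambda$ is a noncocompact lattice in $\mathsf{H}$, there is no reason for $\rho_1(\gamma)$ and $\rho_2(\gamma)$ to coincide or commute for any $\gamma\notin\Lambda$.

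\textbf{The missing idea.} Non-cocompactness should be used to produce a \emph{split} element of $\mathsf{H}$ that contracts a unipotent element of $\Gamma$ lying off $\mathsf{H}$. A unipotent element inside $\Lambda$ is not what the argument needs. The paper proceeds as follows.
\begin{enumerate}
\item Once $\Lambda$ is a noncocompact lattice in $\mathsf{H}$, $\Gamma$ is nonuniform. By arithmeticity it is commensurable with $\phi({\bf G}(\mathbb{Z}))$ for a simply connected $\mathbb{Q}$-group ${\bf G}$. So the superrigid extensions are honest algebraic representations, with no Galois twist or bounded part. Moreover, $\Lambda'$ has finite index in ${\bf H}(\mathbb{Z})$ for a $\mathbb{Q}$-isotropic ${\bf H}$.
\item Take $g\neq 1$ in ${\bf T}(\mathbb{R})^\circ$ for a $\mathbb{Q}$-split torus ${\bf T}<{\bf H}$. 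The horospherical subgroups $U(g)$ and $U(g^{-1})$ generate ${\bf G}(\mathbb{R})^\circ$. Hence, after possibly replacing $g$ by $g^{-1}$, the intersection ${\bf H}(\mathbb{R})\cap U(g)$ has positive codimension in $U(g)$.
\item Since $U(g)$ is defined over $\mathbb{Q}$, $\Gamma'\cap U(g)$ is a lattice in $U(g)$. So there is $u\in\Gamma'\cap U(g)$ with $\langle u\rangle\cap{\bf H}(\mathbb{R})=\{1\}$.
\item Since $\overline{\rho}_1(g)=\overline{\rho}_2(g)$, take $m$ with $u^m\in\Gamma_1\cap\Gamma_2$. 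Both $\rho_1(u^m)=\overline{\rho}_1(u^m)$ and $\rho_2(u^m)=\overline{\rho}_2(u^m)$ are contracted to $1$ under conjugation by powers of this common element. Thus they lie in its horospherical subgroup in $\mathsf{GL}_d(\mathbb{R})$, which is unipotent and hence nilpotent.
\item No nontrivial power of $u$ lies in $\Lambda'$. So normal forms in the amalgam show that the two copies of $u^m$ generate a free group of rank $2$. That group cannot embed in a nilpotent group, which is the contradiction.
\end{enumerate}
The relation you need is therefore a nilpotency law, not coincidence or commutation. It requires nothing about $\overline{\rho}_2$ beyond its agreement with $\overline{\rho}_1$ at the single element $g$.
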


Thus, for example, the double $\mathsf{SL}_n(\mathbb{Z}) \ast_{\mathsf{SL}_m(\mathbb{Z})} \mathsf{SL}_n(\mathbb{Z})$ is residually finite but fails to be linear for $2 \leq m \leq n-1$ (where $\mathsf{SL}_m(\mathbb{Z})$ is embedded in $\mathsf{SL}_n(\mathbb{Z})$, for instance, by adding $1$'s on the diagonal), as does the residually finite double $\mathsf{SL}_2(\mathbb{Z}[\sqrt{2}]) \ast_{\mathsf{SL}_2(\mathbb{Z})} \mathsf{SL}_2(\mathbb{Z}[\sqrt{2}])$. We refer to~\cite{MR764305, MR4388367, MR4653708, MR4862320} for other examples of nonlinear finitely generated residually finite groups.

On the other hand, the following positive result establishes linearity of a large class of doubles of the form $\Gamma \ast_\Lambda \Gamma$, where $\Gamma$ is arithmetic and $\Lambda < \Gamma$ is \hbox{cocompact in the Zariski-closure of~$\Lambda$.}

\begin{theorem}\label{compact-1}  Let $n\geq 2$ be an integer and $\mathsf{C}<\mathsf{SL}_n(\mathbb{R})$ a compact subgroup. 
Let $\mathsf{C}$ be defined by a set of polynomials of degree at most $r$ in the matrix entries and with coefficients in a subfield $K \subset \mathbb{R}$, and let $\Lambda := \mathsf{SL}_n(K)\cap \mathsf{C}$. Then there is a faithful representation of the double ${\mathsf{SL}_n(K) \ast_\Lambda \mathsf{SL}_n(K})$ into $\mathsf{SL}_m(K[s,t])$, where $K[s,t]$ denotes the ring of polynomials in two variables over~$K$ and ${m=2^{4\binom{n^2+r}{r}}}$.
\end{theorem}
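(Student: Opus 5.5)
The plan is to realize the double as the subgroup of $\mathsf{SL}_m(K[s,t])$ generated by $\rho(\mathsf{SL}_n(K))$ and its conjugate $T\rho(\mathsf{SL}_n(K))T^{-1}$. Here $\rho$ is a faithful representation and $T=T(t)$ is a unipotent-type matrix, polynomial in $t$, that centralizes $\rho(\Lambda)$. Faithfulness will then be proved by a leading-coefficient (degree in $t$) argument on reduced words, in the spirit of Shalen's and Nisnevich's proofs for free products.

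\emph{Step 1: Chevalley-type realization of $\mathsf{C}$.} Let $V$ be the $K$-space of polynomials of degree $\le r$ in the $n^2$ matrix entries; $\dim V=N:=\binom{n^2+r}{r}$. The group $\mathsf{SL}_n$ acts on $V$ by right translation. Let $W\subset V$ be the subspace cut out by the defining polynomials of $\mathsf{C}$, i.e.\ the degree-$\le r$ part of its ideal; since the polynomials have coefficients in $K$, $W$ is defined over $K$. The stabilizer of $W$ in $\mathsf{SL}_n$ is exactly $\mathsf{C}$: an element $g$ preserving $W$ maps the zero set $\mathsf{C}$ to itself, and $1\in\mathsf{C}$ forces $g\in\mathsf{C}$. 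Passing to $\bigwedge^{d}V$ with $d=\dim W$, the group $\mathsf{C}$ becomes the stabilizer of the line $\ell=\bigwedge^d W$. To keep everything inside a single exterior algebra, I would work with $E=\bigwedge^{\bullet}(V^{\oplus 4})$. This has dimension $2^{4N}=m$ and contains $\bigwedge^{\bullet}V$ together with enough extra copies to carry the auxiliary constructions below. The copies of $V$ are twisted by $s$: for instance, one copy carries the contragredient, and a diagonal rescaling by powers of $s$ separates weights. The resulting faithful representation $\rho$ has image in $\mathsf{SL}_m(K[s])$.

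\emph{Step 2: the centralizing element.} Compactness of $\mathsf{C}$ gives a $\mathsf{C}$-invariant inner product on $V\otimes\mathbb{R}$. The orthogonal projection $P$ onto $\ell$ (or onto $W$) then commutes with $\rho(\Lambda)$. I would set $T=I+tP'$, where $P'$ is a nilpotent operator built from $P$ and the duplicated copies, so that $\det T=1$, $T^{-1}=I-tP'$, and $T$ commutes with $\rho(\Lambda)$. This makes the assignment $g\mapsto\rho(g)$ on the first factor and $g\mapsto T\rho(g)T^{-1}$ on the second a well-defined homomorphism from the double.

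\emph{Step 3: injectivity.} Take a reduced word $g_1\,(Th_1T^{-1})\,g_2\cdots$ with all $g_i,h_i\notin\Lambda$. Its image has $t$-degree equal to twice the number of $T$-syllables. The top coefficient is a product of the form $P'\rho(g_1)P'\rho(h_1)P'\cdots$. Nonvanishing of this product reduces to nonvanishing of the matrix coefficients $\langle\rho(g)v,v\rangle$ for $g\notin\Lambda$, where $v$ spans $\ell$ or its pairing with the contragredient copy. I expect this to be the main obstacle. The condition $g\notin\Lambda$ only guarantees $\rho(g)\ell\neq\ell$, not that the coefficient is nonzero. My first attempt is to use the auxiliary variable $s$ to make the relevant coefficient a nonzero polynomial in $s$: conjugating one copy by a diagonal matrix in $s$ should turn the vanishing of a single number into the vanishing of a family of coefficients, which would force $\rho(g)\ell=\ell$. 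Compactness should also enter here through positivity. Pairing a copy with its conjugate via the invariant inner product yields $\|P\rho(g)v\|^2$-type expressions, which vanish only in degenerate situations that the $s$-twist excludes. If the top coefficient is nonzero, the word is nontrivial, and the double embeds in $\mathsf{SL}_m(K[s,t])$.
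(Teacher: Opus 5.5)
The outer layer of your plan (Steps 2--3) is the same as the paper's Theorem~\ref{main}(\ref{main-item1}). After conjugating by $u_t^{-1}$, the representation there is $\gamma_1\mapsto\gamma_1$, $\gamma_2\mapsto S\gamma_2S^{-1}$ with $S=u_t^{-1}\tau u_t$ centralizing $\Lambda$, and faithfulness is a leading-coefficient count on reduced words. But the step you flag as the main obstacle is the entire content of the theorem, and none of the devices you suggest for it works.

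Make the requirement precise. Since $P'^2=0$ and $P'$ commutes with $\rho(\Lambda)$, one has $P'\rho(\lambda)P'=\rho(\lambda)P'^2=0$ for $\lambda\in\Lambda$. For $P'=v\otimes\xi$ of rank one (so $\xi(v)=0$), this says that $f(g):=\xi(\rho(g)v)$ vanishes on all of $\Lambda$. The $t^{2k}$-coefficient of $g_1(Th_1T^{-1})\cdots g_k(Th_kT^{-1})$ is then $\pm f(h_1)f(g_2)\cdots f(g_k)f(h_k)\,\rho(g_1)v\otimes\xi$. So you need a single matrix coefficient, with $\Lambda$-invariant $v$ and $\xi$, whose zero set in $\mathsf{SL}_n(K)$ is exactly $\Lambda$.

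Your proposed devices do not supply this:
\begin{itemize}
\item Your candidate $\langle\rho(g)v,v\rangle$ with $v$ spanning $\ell$ is not of this kind. It equals $\pm\|v\|^2\neq 0$ on $\Lambda$, and the corresponding $P'$ is a multiple of a projection, so $\det T\neq 1$ and $T^{-1}$ is not polynomial in $t$.
\item The condition $\rho(g)\ell\neq\ell$ gives no control over any individual coefficient.
\item $\|P\rho(g)v\|^2$ vanishes whenever $\rho(g)v\perp\ell$, which can happen for $g\notin\Lambda$.
\item The $s$-twist fails as well. The natural way to turn ``$\rho(g)v\notin\ell$'' into a nonzero polynomial is a covector $\sum_j s^j\xi_j$ with the $\xi_j$ spanning the annihilator of $\ell$. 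That covector is not $\Lambda$-invariant once $\Lambda$ acts nontrivially on $\ell^\perp$, so $P'$ would stop commuting with $\rho(\Lambda)$.
\end{itemize}
In the paper, $s$ enters only through a unipotent $g_s$ supported on a block on which $\Lambda$ acts trivially (Theorem~\ref{main}(\ref{main-item2})), where this problem does not arise.

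The missing idea is how compactness, via positivity, produces a detecting coefficient. The paper works with ``an entry equals $1$''. Theorem~\ref{main}(\ref{main-item1}) needs only that $\gamma\in\Lambda$ if and only if $\gamma_{11}=1$, the leading coefficient being $((\gamma_1)_{11}-1)((\gamma_2)_{11}-1)$; in your language, $f(\gamma)=\gamma_{11}-1$ on $1\oplus\gamma$. Compactness enters through AM--GM. For $g\in\mathsf{SL}_D(\mathbb{R})$ the matrix $gg^t$ is positive definite of determinant $1$, so $\mathrm{tr}(gg^t)\geq D$, with equality if and only if $g\in\mathsf{SO}(D)$. This is read off from the $(1,1)$-entry of $M\mapsto gMg^t$ in a basis of symmetric matrices beginning with $I$. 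A basis beginning with $E_{11}$ gives $g_{11}^2$, which for orthogonal $g$ detects the stabilizer of $\mathbb{R}e_1$ (Theorem~\ref{compact-1'}). These two conditions are merged using $s$ via Lemma~\ref{multiple}. A general compact $\mathsf{C}$ is reduced to this case by Chevalley's theorem, an exterior power, and a $\mathsf{C}$-invariant form. (For your $P$ to have entries in $K$, the invariant inner product must likewise be chosen $K$-rational, which you do not address.)

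With this idea your framework does close, even without $s$. Arrange $\rho:\mathsf{SL}_n(K)\to\mathsf{SL}_D(K)$ so that $\Lambda$ is the preimage of the stabilizer of $\mathbb{R}e_1$ in $\mathsf{SO}(D)$. Then the function $f(g)=\big(\mathrm{tr}(\rho(g)\rho(g)^t)-D\big)+\sum_{i\geq 2}\rho(g)_{i1}^2$ is the coefficient for $v=(I+E_{11},E_{11},1)$ and $\xi=(\mathrm{tr},-(\cdot)_{11},-D)$. The ambient representation is two copies of $M\mapsto\rho(g)M\rho(g)^t$ plus a trivial summand (add a copy of $\rho$ for faithfulness). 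This $f$ is nonnegative and vanishes exactly on $\Lambda$.

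As written, however, Step 3 is unproved. The dimension $m=2^{4N}$ you attach to $\bigwedge^{\bullet}(V^{\oplus 4})$ is read off from the statement rather than produced by a construction. In the paper it comes from bounding $((d_n')^2+3d_n'+2)^2+1$ using $d_n'\leq 2^{d_n}$.
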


The following corollary is a consequence.

 \begin{corollary}\label{compactcorollaries} \begin{enumerate}\item\label{compact-2} Let $V$ be a finite-dimensional real vector space, let $\Gamma < \mathsf{GL}(V)$, and let $\mathsf{L} < \mathsf{GL}(V)$ be compact. Then for every finite-index subgroup $\Lambda$ of $\Gamma \cap \mathsf{L}$ that is separable in $\Gamma$, the double $\Gamma \ast_{\Lambda}\Gamma$ is  linear over $\mathbb{R}$. 
\item \label{precompactgaloisconjugate}
Let $\Gamma < \mathsf{GL}_n(\mathbb{C})$ be a subgroup. Let $\mathsf{G}$ be a $\mathbb{C}$-algebraic subgroup of $\mathsf{GL}_n(\mathbb{C})$, and suppose~$\Lambda$ is a finite-index subgroup of $\Gamma \cap \mathsf{G}$ that is separable in $\Gamma$. If there is an embedding $\sigma: K \rightarrow \mathbb{C}$ such that $(\Gamma\cap \mathsf{G})^\sigma$ is precompact in $\mathsf{GL}_n(\mathbb{C})$, where $K$ denotes the entry field of $\Gamma$, then the double $\Gamma \ast_\Lambda \Gamma$ is linear over $\mathbb{R}$.\end{enumerate}
\end{corollary}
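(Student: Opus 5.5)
The plan is to derive both parts from Theorem~\ref{compact-1}, using two standard facts about amalgams: (a) if $H \le A$ and $H\cap B = D$, then the natural map $H\ast_D H \to A\ast_B A$ is injective (normal forms); and (b) linearity over a field passes to finite-index overgroups (induced representations).

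\emph{Part (\ref{compact-2}).} First take $\Lambda = \Gamma\cap \mathsf{L}$.
\begin{enumerate}
\item Embed $\mathsf{GL}(V)\cong \mathsf{GL}_d(\mathbb{R})$ into $\mathsf{SL}_{d+1}(\mathbb{R})$ via $g\mapsto \mathrm{diag}(g,\det g^{-1})$. The image $\mathsf{C}$ of $\mathsf{L}$ is a compact subgroup of $\mathsf{SL}_{d+1}(\mathbb{R})$.
\item A compact subgroup of a real algebraic group is Zariski closed, i.e.\ it is the set of real points of an algebraic subgroup. So $\mathsf{C}$ is cut out by finitely many real polynomials, and Theorem~\ref{compact-1} applies with $K=\mathbb{R}$.
\item This gives an embedding $\mathsf{SL}_{d+1}(\mathbb{R})\ast_{\mathsf{C}}\mathsf{SL}_{d+1}(\mathbb{R}) \hookrightarrow \mathsf{SL}_m(\mathbb{R}[s,t])$.
\item Since $\Gamma\cap\mathsf{C}=\Gamma\cap\mathsf{L}$, fact (a) embeds $\Gamma\ast_{\Gamma\cap\mathsf L}\Gamma$ in this amalgam.
\item To land over $\mathbb{R}$: the field $\mathbb{R}(s,t)$ has characteristic $0$ and cardinality continuum, so its algebraic closure is abstractly isomorphic to $\mathbb{C}$. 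This gives $\mathsf{SL}_m(\mathbb{R}[s,t])\hookrightarrow \mathsf{GL}_m(\mathbb{C})\hookrightarrow \mathsf{GL}_{2m}(\mathbb{R})$.
\end{enumerate}

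For general $\Lambda$ of finite index in $\Lambda':=\Gamma\cap\mathsf L$ and separable in $\Gamma$:
\begin{enumerate}
\item Separability lets us exclude the finitely many nontrivial cosets of $\Lambda$ in $\Lambda'$. This yields a finite-index subgroup $\Gamma_0<\Gamma$ with $\Gamma_0\supseteq\Lambda$ and $\Gamma_0\cap\Lambda'=\Lambda$.
\item By the first case applied to $\Gamma_0$, together with fact (a), the group $\Gamma_0\ast_\Lambda\Gamma_0$ is $\mathbb{R}$-linear.
\item Consider the folding map $\Gamma\ast_\Lambda\Gamma\to\Gamma$ and the finite-index preimage $H$ of $\Gamma_0$. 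By fact (b) it suffices to show $H$ is linear.
\item By Bass--Serre theory, $H$ is the fundamental group of a finite graph of groups. Its vertex groups are $\Gamma_0\cap\gamma\Gamma\gamma^{-1}=\Gamma_0$ if $\Gamma_0$ is chosen normal. Its edge groups are $\Gamma_0\cap\gamma\Lambda\gamma^{-1}=\gamma(\Gamma_0\cap\Lambda)\gamma^{-1}$, which are intersections of $\Gamma_0$ with the compact groups $\gamma\mathsf L\gamma^{-1}$ (up to the same separability adjustment).
\end{enumerate}
I would pass to a normal $\Gamma_0$ contained in the previous one, re-checking that the condition $\Gamma_0\cap\Lambda'=\Lambda$ can be kept up to replacing $\Lambda$ by $\Gamma_0\cap \Lambda$, which is harmless by fact (a).

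I expect the main obstacle to be this last step: realizing a finite graph of groups with vertex groups $\Gamma_0$ and edge groups of the form $\Gamma_0\cap(\text{compact})$ faithfully. I would try first to iterate the construction behind Theorem~\ref{compact-1}, adding one fresh pair of transcendental parameters for each edge of the graph. Failing that, I would look for a direct argument that $\Gamma\ast_\Lambda\Gamma$ embeds in $\Gamma_1\ast_{\Gamma_1\cap \mathsf L'}\Gamma_1$ for a suitable compact $\mathsf L'$ obtained by enlarging $V$ (e.g.\ inducing up from $\Gamma_0$).

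\emph{Part (\ref{precompactgaloisconjugate}).} Apply $\sigma$ entrywise.
\begin{enumerate}
\item Entrywise $\sigma$ gives an isomorphism $\Gamma\to\Gamma^\sigma$ carrying $\Gamma\cap\mathsf G$ onto $\Gamma^\sigma\cap\mathsf G^\sigma$. Here $\mathsf G^\sigma$ is the conjugate algebraic group; enlarge $K$ to contain the defining coefficients of $\mathsf{G}$, extending $\sigma$ to $K$ so enlarged.
\item Let $\mathsf L$ be the closure of $(\Gamma\cap\mathsf G)^\sigma$. It is compact in $\mathsf{GL}_n(\mathbb{C})\subset\mathsf{GL}_{2n}(\mathbb{R})$.
\item Since $\mathsf G^\sigma$ is Zariski closed, hence closed, we have $\mathsf L\subset\mathsf G^\sigma$. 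Therefore $\Gamma^\sigma\cap\mathsf L=(\Gamma\cap\mathsf G)^\sigma$.
\item Separability and finite index transfer to $\Lambda^\sigma$ under the isomorphism.
\end{enumerate}
Part (\ref{compact-2}) applied to $\Gamma^\sigma<\mathsf{GL}_{2n}(\mathbb{R})$ then gives $\mathbb{R}$-linearity of $\Gamma^\sigma\ast_{\Lambda^\sigma}\Gamma^\sigma\cong\Gamma\ast_\Lambda\Gamma$.
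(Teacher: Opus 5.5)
\textbf{There is a genuine gap in the general case of Part~(\ref{compact-2}).}

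Your first case ($\Lambda=\Gamma\cap\mathsf L$) is correct and fills in the details of the final step of the paper's argument. You pass to $\mathsf{SL}_{d+1}(\mathbb{R})$ via $g\mapsto\mathrm{diag}(g,\det g^{-1})$, use that compact subgroups are Zariski closed, apply Theorem~\ref{compact-1} with $K=\mathbb{R}$, restrict to $\Gamma\ast_{\Gamma\cap\mathsf L}\Gamma$ by normal forms, and embed $\mathbb{R}[s,t]$ in $\mathbb{C}$. Your reduction of Part~(\ref{precompactgaloisconjugate}) to Part~(\ref{compact-2}), via the compact group $\overline{(\Gamma\cap\mathsf G)^\sigma}\subset\mathsf G^\sigma$, is also essentially the paper's.

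The gap is the general case of Part~(\ref{compact-2}), which Part~(\ref{precompactgaloisconjugate}) also needs, since there $\Lambda$ is only of finite index in $\Gamma\cap\mathsf G$. Passing to the preimage $H$ of $\Gamma_0$ under the folding map makes no progress. Linearity over a field passes both to finite-index subgroups and to finite-index overgroups, so linearity of $H$ is \emph{equivalent} to linearity of $\Gamma\ast_\Lambda\Gamma$. Moreover, $H$ is no longer a double. For $\Gamma_0$ normal, as you arrange, it is the fundamental group of a graph of groups with two vertex groups $\Gamma_0$. That is, it is a double of $\Gamma_0$ along $\Gamma_0\cap\Lambda$, in general followed by further HNN extensions along conjugates of $\Gamma_0\cap\Lambda$. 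Neither Theorem~\ref{main} nor Theorem~\ref{compact-1} says anything about HNN extensions. ``Adding fresh transcendental parameters for each edge'' is not an argument, so the main step of the general case is missing.

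The fallback you mention is the right idea. It is what the paper does, and it needs no Bass--Serre theory or normality of $\Gamma_0$. Let $\rho\colon\Gamma\to\mathsf{GL}(W)$ be the representation induced from the inclusion $\Gamma_0\hookrightarrow\mathsf{GL}(V)$. Then $W=\bigoplus_i g_iV$ for coset representatives $g_1=1,g_2,\dots$ of $\Gamma/\Gamma_0$. The map $\rho$ is faithful because its restriction to $\Gamma_0$ contains $V$.

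For $\lambda\in\Lambda$, $\rho(\lambda)$ is a block-permutation matrix whose nonzero blocks $g_j^{-1}\lambda g_i$ lie in the compact sets $g_j^{-1}\mathsf L g_i$. Hence $\mathsf L':=\overline{\rho(\Lambda)}$ is compact. Preserving $V$ and restricting to an element of $\mathsf L$ are closed conditions, so every element of $\mathsf L'$ preserves the summand $g_1V=V$ and acts on it by an element of $\mathsf L$.

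On the other hand, $\rho(\gamma)$ preserves $V$ only if $\gamma\in\Gamma_0$, and then it acts on $V$ as $\gamma$. Hence $\rho(\Gamma)\cap\mathsf L'\subseteq\rho(\Gamma_0\cap\mathsf L)=\rho(\Lambda)$, using $\Gamma_0\cap\mathsf L=\Gamma_0\cap(\Gamma\cap\mathsf L)=\Lambda$. So $\rho(\Gamma)\cap\mathsf L'=\rho(\Lambda)$. Your first case, applied to $\rho(\Gamma)<\mathsf{GL}(W)$ and $\mathsf L'$, then finishes the proof.
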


It thus follows, for example, that if $\Gamma$ is a finitely generated subgroup of a compact Lie group, then the double $\Gamma \ast_{\Lambda}\Gamma$ is linear for any maximal abelian subgroup $\Lambda$ of $\Gamma$. Note also that, in the setting of Corollary~\ref{compactcorollaries}, separability of $\Lambda$ in $\Gamma$ is not automatic even for finitely generated~$\Gamma$. Indeed, if $m \geq 5$ and one takes $\Gamma:= \mathsf{Spin}(f_m; \mathbb{Z}[\sqrt{2}])$, where $f_m$ is the quadratic form $\sqrt{2}x_1^2+\sqrt{2}x_2^2+x_3^2+\ldots+x_m^2$, and $\Lambda$ to be the stabilizer in $\Gamma$ of the standard basis vector~$e_1$, then it follows from work of Kneser~\cite{MR549966} and Millson~\cite{MR422501} related to the congruence subgroup property that $\Lambda$ possesses finite-index subgroups that fail to be separable in $\Gamma$. This example also demonstrates the importance of the separability assumptions in Theorems~\ref{absolutelysimple} and~\ref{anisotropic} to follow.

While the stipulation that a finitely generated group embed in a compact Lie group may appear very restrictive, the authors are for instance not aware of any linear (Gromov-)hyperbolic group that does not embed in a compact Lie group. Indeed, celebrated work of Agol \cite{MR3104553} established that any hyperbolic group that acts properly and cocompactly on a $\mathrm{CAT}(0)$ cube complex is virtually compact special. Following Haglund--Wise \cite{HW08}, a finitely generated group~$\Gamma$ is said to be {\em special} (respectively, {\em compact special}) if $\Gamma$ embeds in a right-angled Coxeter group (resp., embeds as a quasiconvex subgroup of a right-angled Coxeter group $W$ with respect to a Coxeter basis for $W$); see \cite{HW08} for several equivalent definitions. Agol \cite{agol2018hyperbolic} showed that any finitely generated right-angled Coxeter group, and hence any finitely generated virtually special group, embeds in a compact Lie group. In \S \ref{amalgamsalongprecompact}, we apply the latter fact together with Corollary~\ref{compactcorollaries} to prove the following.

\begin{theorem}\label{specialoverQC}
Let $\Gamma$ be a virtually compact special Gromov-hyperbolic group and $\Lambda < \Gamma$ a quasiconvex subgroup. Then the double $\Gamma \ast_\Lambda \Gamma$ is linear over $\mathbb{R}$.
\end{theorem}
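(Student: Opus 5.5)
The plan is to reduce Theorem~\ref{specialoverQC} to Corollary~\ref{compactcorollaries}(\ref{compact-2}). I will find a faithful representation of $\Gamma$ into a compact Lie group $\mathsf{U}(N)$ and a compact subgroup $\mathsf{L}'<\mathsf{U}(N)$ such that $\Gamma\cap\mathsf{L}'=\Lambda$. Separability of $\Lambda$ in $\Gamma$ is standard in this setting: quasiconvex subgroups of virtually compact special hyperbolic groups are separable by Haglund--Wise \cite{HW08}. Corollary~\ref{compactcorollaries}(\ref{compact-2}) will then give linearity of $\Gamma\ast_\Lambda\Gamma$ over $\mathbb{R}$.

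\emph{Step 1 (virtual retraction).} I will use the Haglund--Wise fact that a quasiconvex subgroup of a virtually compact special hyperbolic group is a virtual retract. That is, there is a finite-index subgroup $\Gamma_0<\Gamma$ with $\Lambda<\Gamma_0$, together with a retraction $\rho\colon\Gamma_0\to\Lambda$. Shrinking if needed, I will also take $\Gamma_0$ to be special, so that it embeds in a finitely generated right-angled Coxeter group. By Agol \cite{agol2018hyperbolic}, $\Gamma_0$ then admits a faithful representation $\varphi\colon\Gamma_0\to\mathsf{K}$ into a compact Lie group, which we may take to be $\mathsf{K}<\mathsf{U}(d)$.

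I expect this step to be the main obstacle. Haglund--Wise directly give a retraction only from a finite-index special subgroup $\Gamma'$ onto $\Lambda\cap\Gamma'$. Upgrading this to a finite-index $\Gamma_0$ containing all of $\Lambda$ needs an extra argument. My first attempt would use separability of $\Lambda$ and of its finite-index subgroups, together with the standard characterization of virtual retracts via separability in the special setting. If that fails, I would instead choose $\mathsf{L}'$ so that $\Lambda$ is merely a finite-index subgroup of $\Gamma\cap\mathsf{L}'$, which Corollary~\ref{compactcorollaries}(\ref{compact-2}) explicitly allows.

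\emph{Step 2 (cutting out $\Lambda$ in $\Gamma_0$).} Define $\psi\colon\Gamma_0\to\mathsf{K}\times\mathsf{K}$ by $\psi(g)=(\varphi(g),\varphi(\rho(g)))$; it is faithful because $\varphi$ is. Let $\mathsf{L}=\{(k,k):k\in\mathsf{K}\}$, the compact diagonal subgroup. Then $\psi(g)\in\mathsf{L}$ if and only if $\varphi(g)=\varphi(\rho(g))$, that is, $g=\rho(g)$, that is, $g\in\Lambda$. Hence $\psi(\Gamma_0)\cap\mathsf{L}=\psi(\Lambda)$.

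\emph{Step 3 (inducing up to $\Gamma$).} Let $\pi=\mathrm{Ind}_{\Gamma_0}^{\Gamma}\psi$. This is unitary on $W=\bigoplus_{g\Gamma_0\in\Gamma/\Gamma_0}V_{g\Gamma_0}$ with $V_{\Gamma_0}=\mathbb{C}^{2d}$. It is faithful, since its restriction to $\Gamma_0$ contains $\psi$ and $\ker \pi$ is normal and contained in $\ker\psi$. The elements of $\Gamma$ preserving the block $V_{\Gamma_0}$ are exactly those of $\Gamma_0$, and on this block $\Gamma_0$ acts by $\psi$. Set
\[
\mathsf{L}'=\{A\in\mathsf{U}(W): A V_{\Gamma_0}=V_{\Gamma_0},\ A|_{V_{\Gamma_0}}\in\mathsf{L}\}.
\]
This is a closed, hence compact, subgroup of $\mathsf{U}(W)$, and by Step 2 we have $\pi(\Gamma)\cap\mathsf{L}'=\pi(\Lambda)$.

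\emph{Conclusion.} Viewing $\mathsf{U}(W)$ inside $\mathsf{GL}$ of the underlying real vector space of $W$, we have a real representation in which $\Lambda=\Gamma\cap\mathsf{L}'$ with $\mathsf{L}'$ compact, and $\Lambda$ is separable in $\Gamma$. Corollary~\ref{compactcorollaries}(\ref{compact-2}) then yields that $\Gamma\ast_\Lambda\Gamma$ is linear over $\mathbb{R}$.
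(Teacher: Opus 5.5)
\textbf{There is a genuine gap in Step~1, exactly where you suspected.} Your overall route (retraction, diagonal embedding into $\mathsf{K}\times\mathsf{K}$, induction, then Corollary~\ref{compactcorollaries}(\ref{compact-2})) is the paper's route through Lemma~\ref{retractsarelagebraic} and Corollary~\ref{retract}. But Haglund--Wise only give a finite-index $\Gamma_0<\Gamma$ and a retraction $\rho\colon\Gamma_0\to\Lambda\cap\Gamma_0$. Nothing in your sketch produces a finite-index subgroup containing all of $\Lambda$ that retracts onto $\Lambda$. Extending $\rho$ over $\Lambda\Gamma_0$ would need a retraction compatible with conjugation by $\Lambda$, and separability gives no such thing.

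Your fallback does not repair this as written. If you run Steps~2 and~3 with this $\Gamma_0$, you get $\pi(\Gamma)\cap\mathsf{L}'=\pi(\Lambda\cap\Gamma_0)$. That is a finite-index \emph{subgroup} of $\pi(\Lambda)$. Corollary~\ref{compactcorollaries}(\ref{compact-2}) instead needs a compact group whose intersection with $\pi(\Gamma)$ \emph{contains} $\pi(\Lambda)$ with finite index. Moreover, $\mathsf{L}'$ does not contain $\pi(\Lambda)$, since elements of $\Lambda\smallsetminus\Gamma_0$ move the block $V_{\Gamma_0}$. (Separately, "shrinking $\Gamma_0$ to be special" would break $\Lambda<\Gamma_0$. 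It is also unnecessary, since Agol's embedding applies to any finitely generated virtually special group.)

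\textbf{The missing idea is to pass to the closure of $\pi(\Lambda)$ and use a coset decomposition.} Set $\mathsf{C}:=\overline{\pi(\Lambda)}$, which is compact because $\pi(\Gamma)\subset\mathsf{U}(W)$. Write $\Lambda=\bigcup_i\lambda_i(\Lambda\cap\Gamma_0)$ for finitely many $\lambda_i\in\Lambda$. Since $\mathsf{L}'$ is closed and contains $\pi(\Lambda\cap\Gamma_0)$, you get $\mathsf{C}=\bigcup_i\pi(\lambda_i)\overline{\pi(\Lambda\cap\Gamma_0)}\subset\bigcup_i\pi(\lambda_i)\mathsf{L}'$. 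Hence $\pi(\Gamma)\cap\mathsf{C}\subset\bigcup_i\pi(\lambda_i)\big(\pi(\Gamma)\cap\mathsf{L}'\big)=\bigcup_i\pi(\lambda_i\Lambda\cap\Gamma_0)=\pi(\Lambda)$, so $\pi(\Gamma)\cap\mathsf{C}=\pi(\Lambda)$.

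This is precisely the argument of Lemma~\ref{cosets}. The paper phrases it with Zariski closures, which coincide with Euclidean closures for precompact groups, and this is why it works with the weaker notion of $\Lambda$ being \emph{virtually} a virtual retract. With $\mathsf{L}'$ replaced by $\mathsf{C}$, your Steps~2--3 applied to $\Lambda\cap\Gamma_0$, together with separability of $\Lambda$ in $\Gamma$, complete the proof.
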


In the sequel, a generalization of Theorem~\ref{specialoverQC} (Theorem~\ref{virtuallyspecialrelativelyhyperbolic}) will be used, together with much existing knowledge, to show that the double of a finitely generated Kleinian group along an arbitrary finitely generated subgroup is linear (Corollary~\ref{kleinian}). Note that in the statement of Theorem~\ref{specialoverQC} there is no malnormality assumption on $\Lambda$, so that the double $\Gamma \ast_\Lambda \Gamma$ need not be hyperbolic. In a previous version of this article, we remarked that we nevertheless expected a double $\Gamma \ast_\Lambda \Gamma$ as in Theorem \ref{specialoverQC} to remain virtually compact special (compare, for instance,~\cite{MR2979855}), giving an alternative proof of linearity of such doubles. Since then, Changqian Li has communicated to us a proof, relying on work of Huang--Wise~\cite{MR4693936}, that such doubles are indeed virtually compact special, and also made public a preprint \cite{arXiv:2605.21734} where he proves the latter directly by showing that appropriately defined doubles of virtually special compact cube complexes (whose fundamental groups are not necessarily hyperbolic) are themselves virtually special.

Given a subgroup $\Lambda$ of a (linear) real algebraic group $\mathsf{G}$, we will denote by $\overline{\Lambda}^{\mathrm{Zar}}$ the Zariski-closure of $\Lambda$ in $\mathsf{G}$. As another application of Corollary~\ref{compactcorollaries}, we establish the following.

\begin{theorem}\label{absolutelysimple}
Let $\mathsf{G}$ be an absolutely almost simple real algebraic group, let $\Gamma < \mathsf{G}$ be a  lattice in $\mathsf{G}$ with adjoint trace field $\neq \mathbb{Q}$, and let $\Lambda < \Gamma$ be a subgroup. Then

\begin{enumerate}
\item\label{absolutelysimple1}  if $\Gamma$ is arithmetic and $\Lambda$ is cocompact in $\overline{\Lambda}^{\mathrm{Zar}}$ and separable in $\Gamma$, then the double $\Gamma \ast_{\Lambda} \Gamma$ is linear; 

\item\label{absolutelysimple2} if $\mathsf{G}$ is not locally isomorphic to $\mathsf{O}(n,1)$ or $\mathsf{U}(n,1)$ for any $n \in \mathbb{N}$, then the double $\Gamma \ast_\Lambda \Gamma$ is linear if and only if $\Lambda$ is cocompact in $\overline{\Lambda}^{\mathrm{Zar}}$ and separable in $\Gamma$.
\end{enumerate}
\end{theorem}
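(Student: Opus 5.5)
Part (1) will follow from Corollary~\ref{compactcorollaries}\eqref{precompactgaloisconjugate}, and part (2) will combine part (1) with Theorem~\ref{negative}.

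\textbf{Part (1).} Since $\Gamma$ is arithmetic, after conjugation it is commensurable with $\mathbf{G}(\mathcal{O}_k)$ for a $k$-form $\mathbf{G}$. Here $k$ is the adjoint trace field, a totally real number field (or a CM-type situation handled the same way). Because $\mathsf{G}$ is absolutely almost simple, we may pass to the adjoint representation and assume the entry field $K$ of $\Gamma$ is $k$ itself, up to finite index. Replacing $\Gamma$ by a finite-index subgroup needs care for the double, so instead we work directly with the adjoint image $\mathrm{Ad}(\Gamma)$. Its kernel is central and finite, which is harmless by the separability hypothesis.

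Let $\mathsf{H} := \overline{\Lambda}^{\mathrm{Zar}}$, a $k$-defined subgroup since $\Lambda \subset \mathbf{G}(k)$. By hypothesis $\Lambda$ is a lattice in $\mathsf{H}$ that is cocompact in $\mathsf{H}(\mathbb{R})$. By Borel--Harish-Chandra and Godement's criterion, $\Lambda$ is commensurable with $\mathbf{H}(\mathcal{O}_k)$, so $\mathbf{H}$ is reductive and $k$-anisotropic, its radical is anisotropic, and $\mathbf{H}(\mathcal{O}_k)$ has no unipotents. The key step is to choose a Galois embedding $\sigma\colon k\to\mathbb{R}$ or $\mathbb{C}$, $\sigma\neq\mathrm{id}$ (it exists since $k\neq\mathbb{Q}$), such that $\mathbf{H}^\sigma(\mathbb{R})$ is compact. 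Then $\Lambda^\sigma$, and hence $(\Gamma\cap\mathsf{H})^\sigma$, is precompact. Applying Corollary~\ref{compactcorollaries}\eqref{precompactgaloisconjugate} with the algebraic group $\mathsf{H}$ gives linearity, since $\Lambda$ has finite index in $\Gamma\cap\mathsf{H}$ and is separable.

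\textbf{Main obstacle.} The difficulty is exactly this choice of $\sigma$. Compactness of $\mathbf{H}$ at the identity place is not given; only $\Lambda$ being a lattice in $\mathsf{H}$ is. A natural attempt uses restriction of scalars: $\mathbf{H}(\mathcal{O}_k)$ is a lattice in $\prod_\tau \mathbf{H}^\tau(k_\tau)$, while $\Lambda$ projects as a lattice into the identity factor. So the remaining factors must be compact; otherwise the image in the identity factor would not be discrete. This yields compactness at every $\sigma\neq\mathrm{id}$, so any nontrivial embedding works. The same reasoning also shows that $k$ need not be totally real for the argument, provided complex places are handled by precompactness in $\mathsf{GL}_n(\mathbb{C})$.

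\textbf{Part (2).} By Margulis arithmeticity ($\mathrm{rank}\geq 2$) and Corlette/Gromov--Schoen (for $\mathsf{Sp}(n,1)$ and $F_4^{-20}$), $\Gamma$ is arithmetic, so part (1) gives the "if" direction. For "only if": if $\Lambda$ is not separable, the double is not residually finite (being finitely generated, as $\Gamma$ is), hence not linear by Mal'cev. If $\Lambda$ has finite index, it is cocompact in its Zariski closure $\mathsf{G}$ precisely when $\Gamma$ is cocompact; the noncocompact case is handled by Theorem~\ref{negative}'s proof idea, with the finite-index case checked directly. If $\Lambda$ has infinite index and is not cocompact in its Zariski closure, Theorem~\ref{negative} applies directly.
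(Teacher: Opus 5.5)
There is a genuine gap in part (2), and the step you flag as the main obstacle in part (1) is argued through the wrong group. Your outline otherwise matches the paper's: a nonidentity Galois conjugate plus Corollary~\ref{compactcorollaries}(\ref{precompactgaloisconjugate}) for (1), and arithmeticity, Mal'cev, and a nonlinearity criterion for (2).

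\textbf{Part (1).} The fact you need concerns ${\bf G}$, not ${\bf H}$. Since $\mathsf{G}$ is absolutely almost simple and $\Gamma$ is an arithmetic lattice in it, $k$ is totally real and ${\bf G}^\sigma(\mathbb{R})$ is compact for \emph{every} embedding $\sigma\neq\mathrm{id}$. Your restriction-of-scalars argument proves exactly this when run for the semisimple group ${\bf G}$: there ${\bf G}(\mathcal{O}_k)$ really is a lattice in $\prod_\tau{\bf G}^\tau(\mathbb{R})$, and it projects injectively onto a lattice in the identity factor. Hence the entire conjugate of the image of $\Gamma$ is precompact, and $\overline{\Lambda}^{\mathrm{Zar}}$ never needs to be analyzed.

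Run for ${\bf H}$ alone, your argument is unsound. When $k\neq\mathbb{Q}$, cocompactness of $\Lambda$ in ${\bf H}(\mathbb{R})$ does not give $k$-anisotropy of ${\bf H}$, nor that ${\bf H}(\mathcal{O}_k)$ is a lattice in the product. For example, take ${\bf H}=\mathbb{G}_m$ over $\mathbb{Q}(\sqrt2)$: the units are cocompact in $\mathbb{R}^\times$, but their Galois conjugate is unbounded. What actually saves you is ${\bf H}^\sigma\subset{\bf G}^\sigma$.

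Separately, the finite central kernel of $\mathrm{Ad}$ on $\Gamma$ is not disposed of by separability. If that kernel is nontrivial, $\mathrm{Ad}$ does not induce an injection of the double. The paper instead:
\begin{enumerate}
\item takes a finite-index normal $\Gamma'$ on which $\phi$ is injective;
\item uses the faithful representation $\varphi$ of $\Gamma$ induced from $\phi|_{\Gamma'}$;
\item checks that $\varphi(\Lambda)$ has finite index in $\varphi(\Gamma)\cap\overline{\varphi(\Lambda)}^{\mathrm{Zar}}$.
\end{enumerate}

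\textbf{Part (2).} Your treatment of finite-index $\Lambda$ cannot work as stated. If $\Gamma$ were not cocompact, a finite-index $\Lambda$ would fail to be cocompact in $\overline{\Lambda}^{\mathrm{Zar}}$, yet $\Gamma\ast_\Lambda\Gamma$ would be linear (see the remark after the proof of Theorem~\ref{negative}). So the ``only if'' direction would simply be false there. No adaptation of the proof of Theorem~\ref{negative} helps, since that proof needs $\Lambda$ of infinite index to find $u$ with $\langle u\rangle\cap{\bf H}(\mathbb{R})=\{1\}$.

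The missing idea is that the hypothesis on the adjoint trace field forces $\Gamma$ to be cocompact. For $\sigma_0\neq\mathrm{id}$, the group $\phi(\Gamma')^{\sigma_0}$ is precompact, so it has no nontrivial unipotents. Hence neither does $\Gamma'$, and Godement's criterion applies. With that, a finite-index $\Lambda$ is automatically cocompact in its Zariski closure and separable, so there is nothing to prove.

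For infinite-index $\Lambda$ not cocompact in $\overline{\Lambda}^{\mathrm{Zar}}$, invoking Theorem~\ref{negative} is a legitimate alternative to the paper's route. The paper instead observes that, since $\Gamma$ is cocompact, such $\Lambda$ has infinite covolume in $\overline{\Lambda}^{\mathrm{Zar}}$, and applies \cite[Cor.~5.3]{TT-IMRN} directly.
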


We remark for example that if $p \geq q \geq 2$ with $p+q \geq 5$ odd, then the adjoint trace field of any cocompact lattice in $\mathsf{O}(p,q)$ differs from $\mathbb{Q}$; see Remark \ref{indefiniteorthogonal}.

Corollary~\ref{compactcorollaries} also has applications to doubles of certain $S$-arithmetic groups, since the latter groups often embed in compact Lie groups. Throughout this article, given a number field $K$ and a place $v$ of $K$, we denote by $K_v$ the completion of $K$ with respect to $v$, and given a finite set $S$ of places of $K$, we denote by $\mathcal{O}_{K,S}$ the ring of $S$-integers of $K$. 

\begin{theorem}\label{anisotropic}
Let ${\bf G}$ be an almost simple algebraic group over a number field $K$, and $S$ be a finite set of places of $K$ containing all archimedean places $v$ of $K$ such that ${\bf G}$ is isotropic over $K_v$. Suppose ${\bf H}$ is a reductive $K$-subgroup of ${\bf G}$ that is anisotropic over $K_w$ for some archimedean place $w$ of $K$. Set $\mathsf{G} = \prod_{v \in S} {\bf G}(K_v)$ and $\mathsf{H} = \prod_{v \in S}{\bf H}(K_v)$.
Then for any lattice $\Gamma < \mathsf{G}$ commensurable with ${\bf G}(\mathcal{O}_{K,S})$ and any finite-index subgroup $\Lambda$ of $\Gamma \cap \mathsf{H}$ that is separable in $\Gamma$, the double $\Gamma \ast_\Lambda \Gamma$ is linear over $\mathbb{R}$. 
\end{theorem}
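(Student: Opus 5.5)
Theorem~\ref{anisotropic} will follow from Corollary~\ref{compactcorollaries}(\ref{precompactgaloisconjugate}). The idea is to find an embedding of the relevant entry field into $\mathbb{C}$ under which $\Gamma\cap\mathsf{H}$ becomes precompact; the archimedean place $w$ at which ${\bf H}$ is anisotropic supplies it.

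\textbf{Realizing $\Gamma$ in a single $\mathsf{GL}_N$.} Fix a faithful $K$-representation ${\bf G}\hookrightarrow \mathsf{GL}_N$ under which ${\bf H}$ is the intersection of ${\bf G}$ with a $K$-algebraic subgroup. The diagonal map realizes ${\bf G}(\mathcal{O}_{K,S})$ as an abstract subgroup of $\mathsf{GL}_N(K)$. I first reduce to $\Gamma_0:=\Gamma\cap{\bf G}(\mathcal{O}_{K,S})$, which has finite index in $\Gamma$. The lattice $\Gamma$ sits inside the commensurator of ${\bf G}(\mathcal{O}_{K,S})$ in $\mathsf{G}$. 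Since ${\bf G}$ is almost simple and $S$ contains the isotropic archimedean places, the arithmeticity and commensurator results should place $\Gamma$ inside ${\bf G}(K)$ up to the center, after passing to the adjoint group if necessary. If that fails, I will argue directly with finite-index subgroups.

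I then need doubles to be compatible with passage to finite-index subgroups. Assuming $\Gamma\subset{\bf G}(K)\subset\mathsf{GL}_N(K)$, set $\Gamma\cap\mathsf{H}=\Gamma\cap{\bf H}(K)$. This is the intersection of $\Gamma$ with the $\mathbb{C}$-algebraic group $\mathsf{G}':={\bf H}(\mathbb{C})$, embedded via any complex embedding of $K$. If instead only a finite-index subgroup lies in ${\bf G}(K)$, I would use the standard fact that linearity of $\Gamma_0\ast_{\Lambda_0}\Gamma_0$, for $\Lambda_0=\Lambda\cap\Gamma_0$, gives linearity of $\Gamma\ast_\Lambda\Gamma$. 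I expect to check this via an induced representation, together with the separability of $\Lambda$ in $\Gamma$.

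\textbf{Precompactness at $w$.} The entry field $K'$ of $\Gamma$ is contained in $K$. Take $\sigma:K\to\mathbb{C}$ to be the embedding corresponding to the place $w$. Then $(\Gamma\cap{\bf H}(K))^\sigma$ is contained in ${\bf H}(K_w)$. Because ${\bf H}$ is reductive and anisotropic over $K_w$ with $K_w\in\{\mathbb{R},\mathbb{C}\}$, the group ${\bf H}(K_w)$ is compact.

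Here I must be careful. A reductive group that is anisotropic over $\mathbb{R}$ has compact real points. Over $\mathbb{C}$ anisotropy forces ${\bf H}$ to be trivial or finite, which is fine. I also need anisotropy of the center, which holds because ${\bf H}$ is anisotropic. Hence $(\Gamma\cap{\bf H}(K))^\sigma$ is precompact in $\mathsf{GL}_N(\mathbb{C})$. Restricting $\sigma$ to $K'$ gives the required embedding, and Corollary~\ref{compactcorollaries}(\ref{precompactgaloisconjugate}) yields linearity over $\mathbb{R}$ of the double for every finite-index, separable $\Lambda<\Gamma\cap\mathsf{H}$.

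\textbf{Main obstacle.} The main obstacle is the first step: ensuring that $\Gamma$, which is merely commensurable with ${\bf G}(\mathcal{O}_{K,S})$, lands in ${\bf G}(K)$ so that Galois conjugation makes sense. My first attempt is the finite-index reduction, comparing $\Gamma\ast_\Lambda\Gamma$ with the finite-index-subgroup double. A double along $\Lambda$ with $\Lambda$ separable should contain the corresponding double along $\Lambda\cap\Gamma_0$ in a finite-index way, modulo the kernel of the map to a finite quotient of $\Gamma$ separating the relevant cosets. Failing that, I would replace ${\bf G}$ by its adjoint group, where commensurators of arithmetic groups are exactly ${\bf G}^{\mathrm{ad}}(K)$ by Borel's theorem. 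The center is finite, so passing to the adjoint group can be handled by a separate finite-kernel argument.
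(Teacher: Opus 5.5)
Your core mechanism matches the paper's. Anisotropy of ${\bf H}$ at $w$ makes ${\bf H}(K_w)$ compact, so the image of $\Gamma\cap\mathsf{H}$ under the place $w$ is precompact, and the compact-doubling corollary finishes. When $\Gamma\subset{\bf G}(K)$, your application of Corollary~\ref{compactcorollaries}(\ref{precompactgaloisconjugate}) with $\sigma$ attached to $w$ is complete.

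The gap is the general case, which you flag but do not close, and the obstacle is real. The commensurator theorem gives only $\mathrm{Ad}(\Gamma)\subset{\bf G}^{\mathrm{ad}}(K)$, not $\Gamma\subset{\bf G}(K)$ up to the center. For instance, the lattice generated by the level-$2$ Hecke congruence subgroup and $\frac{1}{\sqrt{2}}\begin{pmatrix}0&-1\\2&0\end{pmatrix}$ is commensurable with $\mathsf{SL}_2(\mathbb{Z})$ but not contained in $\pm\mathsf{SL}_2(\mathbb{Q})$.

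Your main remedy is the claim that linearity of $\Gamma_0\ast_{\Lambda_0}\Gamma_0$ yields linearity of $\Gamma\ast_\Lambda\Gamma$, because the former sits in the latter ``in a finite-index way''. This is not a standard fact, and the justification fails. As soon as $\Gamma_0\Lambda\neq\Gamma$, the invariant subtree of $\Gamma_0\ast_{\Lambda_0}\Gamma_0$ in the Bass--Serre tree of $\Gamma\ast_\Lambda\Gamma$ misses the edges $ge$ ($g\in\Gamma\smallsetminus\Gamma_0\Lambda$) at the base vertex. Distance to that subtree is invariant, so the infinite branches beyond these edges give infinitely many vertex orbits, and the index is infinite. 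The finite-index subgroups that do exist, e.g.\ the preimage of $\Gamma_0$ under the folding map $\Gamma\ast_\Lambda\Gamma\to\Gamma$, are graphs of groups with two vertex groups $\Gamma_0$ and one edge for each double coset $\Gamma_0 g\Lambda$, with edge group $\Gamma_0\cap g\Lambda g^{-1}$. These are not doubles, and neither Theorem~\ref{main} nor Lemma~\ref{multiple} applies to them. Your fallback through ${\bf G}^{\mathrm{ad}}$ is plausible, but the ``finite-kernel argument'' is exactly where a faithful representation of all of $\Gamma$ with the required precompactness has to be built, and you leave it unspecified.

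The missing idea is to push the \emph{representation} up from a finite-index subgroup, instead of pushing the \emph{double} down; you mention induction but apply it to the wrong object. The paper proceeds as follows.
\begin{enumerate}
\item Take a finite-index normal subgroup $\Gamma'<\Gamma$ contained in ${\bf G}(\mathcal{O}_{K,S})$. Let $\phi$ be the $w$-coordinate: the projection $\mathsf{G}\to{\bf G}(K_w)$ if $w\in S$, or the inclusion ${\bf G}(K)\subset{\bf G}(K_w)$ if $w\notin S$, in which case ${\bf G}(K_w)$ is itself compact. Then $\phi$ is faithful on $\Gamma'$.
\item Let $\varphi$ be the representation of $\Gamma$ induced from $\phi|_{\Gamma'}$; it is faithful.
\item Set $\Lambda'=\Lambda\cap\Gamma'$. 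For $w\in S$, the blocks of $\varphi|_{\Lambda'}$ are $\lambda\mapsto\phi(g_i\lambda g_i^{-1})=\phi(g_i)\phi(\lambda)\phi(g_i)^{-1}$, which take values in conjugates of the compact group ${\bf H}(K_w)$. Hence $\overline{\varphi(\Lambda)}$ is compact.
\item $\varphi(\Lambda)$ has finite index in $\varphi(\Gamma)\cap\overline{\varphi(\Lambda)}$. Indeed, suppose $\varphi(\gamma)\in\overline{\varphi(\Lambda)}=\bigcup_j\varphi(\lambda_j)\overline{\varphi(\Lambda')}$. Then $\varphi(\lambda_j^{-1}\gamma)$ is block-diagonal, so $\lambda_j^{-1}\gamma\in\Gamma'$. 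Its block at the trivial coset lies in ${\bf H}(K_w)\cap{\bf G}(K)={\bf H}(K)$. Hence $\gamma\in\Gamma\cap\mathsf{H}$, which contains $\Lambda$ with finite index.
\end{enumerate}
Corollary~\ref{compactcorollaries}(\ref{compact-2}) then applies to $\varphi(\Gamma)$ directly. Its proof already absorbs the finite-index and separability bookkeeping you were worried about, and no commensurator theorem, adjoint group, or Galois conjugation is needed.
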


It is not clear to us whether in Theorem \ref{anisotropic} one can replace the condition that ${\bf H}$ be anisotropic over $K_w$ for some archimedean place $v_w$ of $K$, with the weaker condition that~${\bf H}$ be anisotropic over $K$ itself. We remark that if $\Gamma$ is an irreducible lattice in a product $\mathsf{G}$ of almost simple groups over nonarchimedean local fields of characteristic $0$, and if the sum of the ranks of the factors of $\mathsf{G}$ is $\geq 2$, then, by Margulis's arithmeticity theorem \cite{Mar}, we have that $\mathsf{G} = \prod_{v \in S} {\bf G}(K_v)$ and that~$\Gamma$ is commensurable with ${\bf G}(\mathcal{O}_{K,S})$ for some ${\bf G}$, $K$, and $S$ as in Theorem~\ref{anisotropic}, where $S$ consists entirely of nonarchimedean places of $K$, and in this case, one knows that~${\bf G}$ is $K_w$-anisotropic for any archimedean place $w$ of $K$, so that the latter also holds for any $K$-subgroup~${\bf H}$ of~${\bf G}$.

Our methods also allow us to establish linearity of doubles of certain matrix groups along unipotent-free subgroups in the following geometric setting. A submanifold $Y$ of an Hadamard manifold $X$ is said to be {\em reflective} if $Y$ is the fixed point set of an isometric involution of $X$. Note that reflective submanifolds are automatically totally geodesic.

\begin{theorem}\label{reflective}
Let $X$ be a Riemannian symmetric space of noncompact type and $Y$ a reflective submanifold of $X$. Let $\Gamma < \mathrm{Isom}(X)$ be a discrete subgroup, and $\Lambda < \Gamma$ be the stabilizer of $Y$ in~$\Gamma$. If $\Lambda$ acts cocompactly on $Y$, then the double $\Gamma \ast_\Lambda \Gamma$ embeds in $\mathsf{GL}_n(\mathbb{R})$ for some dimension $n=n(X)$ depending only on $X$.
\end{theorem}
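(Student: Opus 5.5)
The approach is to reduce to the compact setting of Theorem~\ref{compact-1} by exploiting the involution $\theta$ fixing $Y$. Write $\mathsf{G}=\mathrm{Isom}(X)^\circ$, or the real points of the adjoint algebraic group, realized in $\mathsf{GL}(\mathfrak g)$ through the adjoint representation. The reflection $\theta$ acts by an involutive automorphism $\sigma$ of $\mathsf{G}$. The stabilizer $\mathsf{H}=\mathrm{Stab}_{\mathsf G}(Y)$ is then, up to finite index, the set of $g$ with $\sigma(g)=g$. This is a real algebraic subgroup, cut out by degree-one polynomial equations in the adjoint matrix entries. Since $\Gamma$ is discrete and $\Lambda$ acts cocompactly on $Y=\mathsf H/(\mathsf H\cap K)$, the group $\Lambda$ is a uniform lattice in $\mathsf H$.

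The key trick is a Cartan-type twist that converts $\mathsf H$ into a compact group. Choose a basepoint $o\in Y$ and let $\tau$ be the Cartan involution at $o$. Then $\tau$ and $\sigma$ commute, and $\mathfrak g=\mathfrak k\oplus\mathfrak p$ splits further into the $\pm1$ eigenspaces of $\sigma$. Consider the dual (compact) real form $\mathfrak u=\mathfrak k\oplus i\mathfrak p$ inside $\mathfrak g_{\mathbb C}$. Replacing the real structure is not what we want, however, since we need the real group $\Gamma$ itself. Instead, I would find an integral domain with a Galois-type embedding, in the spirit of Corollary~\ref{compactcorollaries}\eqref{precompactgaloisconjugate}. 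I am uncertain that this route works directly.

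A cleaner alternative is to use Corollary~\ref{compactcorollaries}\eqref{compact-2} or Theorem~\ref{compact-1} only after passing to a representation in which $\Lambda$ becomes precompact. A more robust plan is to mimic the proof of Theorem~\ref{compact-1} directly. Namely, I would construct the double's representation as $\rho_1=\iota$ on the first factor and $\rho_2=\mathrm{conj}_{u(s,t)}\circ\iota$ on the second. Here $u$ is a polynomial family of matrices commuting with $\mathsf H$, for instance the one-parameter group $\exp(s\,\mathrm{d}\sigma$-related element$)$ built from the $(-1)$-eigenspace in a suitable tensor-power representation $V$ whose dimension depends only on $\mathfrak g$. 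One then applies a ping-pong/Zariski-density argument. For generic parameters, alternating words $g_1h_1g_2h_2\cdots$ with $g_i,h_i\notin\Lambda$ act nontrivially. The point is that the polynomial entries in $s$ have a nonvanishing leading coefficient, which is exactly the ``matrix entries'' mechanism of Theorem~\ref{compact-1}. Cocompactness of $\Lambda$ in $\mathsf H$ enters to guarantee that elements of $\Gamma\smallsetminus\Lambda$ are uniformly far from $\mathsf H$, in the sense that the relevant $(-1)$-block of $\mathrm{Ad}(g)$ is nonzero. Because $\Gamma$ is discrete and $\Lambda$ is the full stabilizer of $Y$, any $g\in\Gamma$ with vanishing block would preserve $Y$ and hence lie in $\Lambda$. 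Finally, one specializes $s,t$ to algebraically independent real numbers over the (countable) entry field. This yields an embedding into $\mathsf{GL}_n(\mathbb R)$, with $n$ depending only on the dimension of the chosen representation, hence only on $X$.

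The main obstacle is the leading-term argument. One must show that in a product of alternating elements, the top-degree coefficient in the twisting parameter is a product of the nonzero off-diagonal blocks. One must also show that this coefficient does not vanish, which requires that no cancellation occurs. I would first try to arrange this by taking $V$ to be an exterior power of $\mathfrak g$ in which $\mathfrak h$ determines a line $\ell$ fixed exactly by $\mathsf H$ (via Chevalley's theorem). This reduces the nonvanishing to showing that the coefficient $\langle g\ell,\ell\rangle$ is nonzero for $g\notin\Lambda$, which plays the role of the compactness/positivity input used in Theorem~\ref{compact-1}.
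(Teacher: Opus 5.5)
There is a genuine gap, and it sits at the step you yourself flag as the main obstacle.

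Your first two routes cannot work in general. Take $\Gamma=\mathsf{SL}_3(\mathbb{Z})$ and $\Lambda=\mathsf{SO}(f;\mathbb{Z})$ with $f$ indefinite and $\mathbb{Q}$-anisotropic. Margulis superrigidity rules out any faithful real representation of $\Gamma$ in which $\Lambda$ is precompact, and the entry field is $\mathbb{Q}$, so no Galois twist helps either.

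Your third route has the right skeleton: it is the twisting/leading-coefficient mechanism of Theorem~\ref{main}. But that mechanism only runs once you have a \emph{detection property}. You need representations in which $\Lambda$ fixes the first basis vector and preserves a complementary hyperplane, and in which, for $\gamma\in\Gamma$, the relevant top-left entries all equal $1$ only if $\gamma\in\Lambda$. The leading coefficient in Theorem~\ref{main}(\ref{main-item1}) is a product of factors $(\gamma_i)_{11}-1$. So what you need off $\Lambda$ is that the entry differ from its value on $\Lambda$. Your condition $\langle g\ell,\ell\rangle\neq 0$ also holds on $\Lambda$, so it is not the relevant one.

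You never produce this detection property:
\begin{itemize}
\item A nonzero off-diagonal block of $\mathrm{Ad}(g)$ is a purely algebraic restatement of $g\notin\mathrm{Stab}(Y)$. As you note, it does not survive products.
\item The coefficient attached to a Chevalley line is an $\mathsf{H}$-bi-invariant function on $\mathsf{G}$. Its level set through $e$ is a hypersurface, much larger than $\mathsf{H}$. For instance, take $\mathsf{H}=\mathsf{SO}(2,1)<\mathsf{SL}_3(\mathbb{R})$ and the line spanned by the defining form in $\mathrm{Sym}^2$. The coefficient is $\frac13\mathrm{tr}\big(g\sigma(g)^{-1}\big)$, which equals $1$ at many $g$ for which $g\sigma(g)^{-1}$ is nontrivial semisimple (e.g.\ with non-real eigenvalues).
\end{itemize}
You give no argument that $\Gamma\smallsetminus\Lambda$ avoids this locus. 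In Theorem~\ref{compact-1} positivity did that job: $\mathrm{tr}(gg^t)=n$ and $\det(gg^t)=1$ force $gg^t=I$. Positivity is unavailable for noncompact $\mathsf{H}$, which is exactly the case Theorem~\ref{reflective} is meant to cover.

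Relatedly, cocompactness plays no actual role in your sketch, although it must. By Theorem~\ref{negative}, $\mathsf{SL}_3(\mathbb{Z})\ast_{\mathsf{SO}(f;\mathbb{Z})}\mathsf{SL}_3(\mathbb{Z})$ is not linear when $f$ is $\mathbb{Q}$-isotropic and indefinite. There $\Lambda$ is a noncocompact lattice in the same reflective $\mathsf{H}$.

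The missing idea is to use all bi-invariant coefficients at once, packaged as a unipotence test. Let $\sigma$ be the involution fixing $Y$, and set $\phi=\mathrm{Ad}$ and $\psi=\mathrm{Ad}\circ c_\sigma$, where $c_\sigma$ is conjugation by $\sigma$. These agree on $\Lambda$ because $\lambda\sigma\lambda^{-1}=\sigma$ for $\lambda\in\Lambda$. On $\mathrm{End}(\wedge^i)$, consider the representation $M\mapsto \wedge^i\phi(\gamma)\,M\,\wedge^i\psi(\gamma)^{-1}$. Here $\Lambda$ fixes $\mathrm{Id}$ and preserves the traceless hyperplane. In a basis beginning with $\mathrm{Id}$, the top-left entry is a normalized $\mathrm{tr}\,\wedge^i\big(\phi(\gamma)\psi(\gamma)^{-1}\big)$. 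These entries all equal $1$ if and only if $\mathrm{Ad}(\gamma^{-1}\sigma\gamma\sigma)$ is unipotent.

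The geometric input, and the place where discreteness and cocompactness enter, is this: for $\gamma\in\Gamma\smallsetminus\Lambda$, the isometry $\gamma^{-1}\sigma\gamma\sigma$ is nontrivial and semisimple. If $Y\cap\gamma^{-1}Y$ is nonempty, it fixes that set pointwise. Otherwise it translates along a common perpendicular to $Y$ and $\gamma^{-1}Y$, by an amount bounded below by the positive length of a shortest orthogeodesic to $\Lambda\backslash Y$ in $\Gamma\backslash X$.

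This is precisely what fails in the noncocompact examples above. There, as in the proof of Theorem~\ref{negative}, some $\gamma\in\Gamma$ lies in the horospherical subgroup of an element of a $\mathbb{Q}$-split torus of $\mathsf{H}$. For such $\gamma$, $\gamma^{-1}\sigma\gamma\sigma$ is a nontrivial unipotent.

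Feeding these representations into Lemma~\ref{multiple}, i.e.\ into Theorem~\ref{main}(\ref{main-item2}), yields a faithful representation over $\mathsf{R}[s,t]$ whose dimension depends only on $\dim\mathfrak{g}$. Specializing $s,t$ as you suggest then gives the embedding into $\mathsf{GL}_n(\mathbb{R})$.
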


Note that in Theorem~\ref{reflective} there is no arithmeticity assumption on the ambient discrete group~$\Gamma$, nor is $\Gamma$ even required to be of finite covolume.

Reflective submanifolds of symmetric spaces, which appear elsewhere in the literature under the guise of affine symmetric pairs, have been classified \cite{zbMATH03152734, MR367873, MR587545}. For instance, if $f$ is a nondegenerate quadratic form on $\mathbb{R}^3$, then $(\mathsf{SL}_3(\mathbb{R}), \mathsf{SO}(f; \mathbb{R}))$ is an affine symmetric pair. If $f$ is moreover defined over $\mathbb{Q}$ and is $\mathbb{Q}$-anisotropic, then $\mathsf{SO}(f; \mathbb{Z})$ is cocompact in $\mathsf{SO}(f; \mathbb{R})$ \cite{MR147566, MR141672}, and one deduces from Theorem~\ref{reflective} that in this case the double $\mathsf{SL}_3(\mathbb{Z}) \ast_{\mathsf{SO}(f; \mathbb{Z})}\mathsf{SL}_3(\mathbb{Z})$ is linear. Note that if $f$ is in addition $\mathbb{R}$-isotropic (i.e., indefinite), then it follows from Margulis superrigidity that the image of $\mathsf{SO}(f; \mathbb{Z})$ under no faithful (indeed, infinite-image) finite-dimensional real representation of $\Gamma$ is precompact, so that linearity of this particular double cannot be deduced from Theorem~\ref{compact-1}.

For $\mathbb{K} = \mathbb{R}, \mathbb{C}, \mathbb{H},$ or $\mathbb{O}$, any totally geodesic $\mathbb{K}$-subspace of a $\mathbb{K}$-hyperbolic space is reflective. In particular, any totally geodesic subspace of a real hyperbolic space is reflective, and Theorem~\ref{reflective} is novel even in this setting, though Baker and Cooper \cite{MR2417445} show in this case that if one moreover assumes that the compact subspace $\Lambda \backslash Y \subset \Gamma \backslash X$ is embedded, that the normal bundle of $\Lambda \backslash Y$ in $\Gamma \backslash X$ has trivial holonomy, and that the length of a shortest orthogeodesic to $\Lambda \backslash Y$ in $\Gamma \backslash X$ is sufficiently large, then the double ${\Gamma \ast_\Lambda \Gamma}$ embeds discretely in the isometry group of some real hyperbolic space whose dimension depends only on the dimensions of $X$ and $Y$, and hence only on the dimension of $X$; a version of this statement also follows from work of Danciger--Gu\'eritaud--Kassel~\cite{danciger2024combination}. (The approach of Baker--Cooper should also yield a similar statement to their result in the case that $X$ is a $\mathbb{K}$-hyperbolic space for $\mathbb{K}= \mathbb{R}, \mathbb{C},$ or $\mathbb{H}$, and $Y \subset X$ is a $\mathbb{K}$-subspace, where the target of the output discrete and faithful representation of the double $\Gamma \ast_\Lambda \Gamma$ is now the isometry group of some $\mathbb{K}$-hyperbolic space whose dimension depends only on those of $X$ and $Y$. We thank Nicolas Tholozan for pointing this out to us.) On the other hand, every compact arithmetic real hyperbolic manifold $\Gamma \backslash X$ of simplest type contains nonembedded compact immersed totally geodesic submanifolds $\Lambda \backslash Y$ of each dimension $0 < \dim(Y) < \dim(X)$. Moreover, it follows from work of Agol~\cite{arXiv:math/0612290}, Belolipetsky--Thomson \cite{MR2821431}, and Bergeron--Haglund--Wise \cite{MR2776645} that, for any $\epsilon > 0$, there are closed real hyperbolic manifolds $\Gamma \backslash X$ of arbitrary dimension, at least some of which are nonarithmetic, possessing embedded compact totally geodesic hypersurfaces $\Lambda \backslash Y$ the shortest orthogeodesics to which are of length $< \epsilon$. These examples~$\Gamma$ are known to be virtually compact special~\cite{MR2776645}, so that the doubles $\Gamma \ast_\Lambda \Gamma$ are in this case again virtually compact special and hence linear~\cite{MR4693936, arXiv:2605.21734}, though the approach via specialness provides no control on the dimension of a faithful representation.

As another sample application of Theorem~\ref{reflective}, if $\Gamma \backslash X$ is a Picard modular surface and ${\Lambda \backslash Y \subset \Gamma \backslash X}$ is any compact totally geodesic complex curve (such curves always exist; see~\cite{MR3705536, MR3708964}), then the double $\Gamma \ast_\Lambda \Gamma$ is linear. Note that linearity of such a double cannot be deduced for instance from Theorem~\ref{absolutelysimple}(\ref{absolutelysimple1}).

It is straightforward to check that a {\em maximal} totally complex subspace $Y$ of a quaternionic hyperbolic space $X$ is reflective. In this setting, if $\Gamma \backslash X$ is compact and the projection $\Lambda \backslash Y$  of~$Y$ to $\Gamma \backslash X$ is compact and embedded, then the double $\Gamma \ast_\Lambda \Gamma$, while Gromov-hyperbolic by the combination theorem of Bestvina and Feighn \cite{BF}, fails to embed discretely in any simple Lie group of real rank one, as can be deduced from the superrigidity theorem of Corlette \cite{Corlette} by an argument similar to the proof of \cite[Thm.~1.7]{TT-IMRN}. On the other hand, it follows from Theorem~\ref{reflective} that such doubles are nevertheless linear, and are thus perhaps good candidates for examples of linear Gromov-hyperbolic groups that fail to embed discretely in any connected Lie group. (Linearity of such doubles in fact also follows from Theorem~\ref{absolutelysimple}(\ref{absolutelysimple1}) in light of \cite[Prop.~2.8]{MR4474914}.) It is indeed open whether there is a linear Gromov-hyperbolic group that fails to admit an Anosov representation in the sense of Labourie \cite{MR2221137} and Guichard--Wienhard~\cite{MR2981818}; see~\cite[pg.~5]{arXiv:2510.21334} and~\cite[Rmk.~3.6]{MR4012341}. We remark however that it follows from work~\cite{arXiv:2504.21802} of Dey with the second-named author that, even in the setting of this paragraph, one can always find a finite-index subgroup $\Gamma'<\Gamma$ containing $\Lambda$ such that the double $\Gamma' \ast_\Lambda \Gamma'$ indeed admits an Anosov representation.

\subsection*{Organization of the paper} We postpone the proof of the negative statement (Theorem~\ref{negative}) to Section~\ref{nonlinear-amalgams}. In Section~\ref{motherlemma}, we prove a key result (Theorem~\ref{main}), to which all subsequent positive statements will be reduced, regarding linearity of doubles of matrix groups along certain subgroups for which membership is detected by the top-left entry. Section~\ref{amalgamsalongprecompact} is devoted to the proof of Theorem~\ref{compact-1} and its consequences, while Section~\ref{doublingalongreflective} contains the proof of Theorem~\ref{reflective}. 

\subsection*{Acknowledgements} We thank Nikolay Bogachev, Fanny Kassel, Amir Mohammadi, Hee Oh, Piotr Przytycki, Jean Raimbault, Eduardo Reyes, Matthew Stover, Nicolas Tholozan, and Henry Wilton for helpful discussions. We are also grateful to Changqian Li for sharing with us his alternative proofs of Theorem~\ref{specialoverQC}. SD is supported by Deutsche Forschungsgemeinschaft (DFG, German Research Foundation) under Germany's Excellence Strategy - EXC-2047/1 - 390685813. The final stage of this research was conducted at the Simons Laufer Mathematical Sciences Institute in Berkeley, California, during the Spring 2026 semester (National Science Foundation Grant No. DMS-2424139); the authors are grateful to the organizers of the program Geometry and Dynamics for Discrete Subgroups of Higher Rank Lie Groups, and to the institute for its hospitality and support.

\section{Doubling along subgroups detected by the top-left entry}\label{motherlemma}

This section is devoted to the proof of Theorem~\ref{main}, to which many of the statements in the sequel will be reduced.

First, we establish some notation. Given an integral domain $\mathsf{R}$, we denote by $\mathsf{Mat}_{m}(\mathsf{R})$ (respectively, by $\mathsf{Mat}_{m\times r}(\mathsf{R})$) the free $\mathsf{R}$-module of $m\times m$ (resp., $m\times r$) matrices with entries in~$\mathsf{R}$. For $1\leq i,j \leq m$, we will denote by $E_{ij}(m)\in \mathsf{Mat}_m(\mathsf{R})$ the matrix whose $(i,j)$-entry  is~$1$ and the rest of whose entries are zero. For $d,k \in \mathbb{N}$, we view the product $\mathsf{GL}_k(\mathsf{R})\times \mathsf{GL}_d(\mathsf{R})$ as a subgroup of $\mathsf{GL}_{k+d}(\mathsf{R})$ via the block-diagonal embedding $(A,B) \mapsto \begin{pmatrix}[0.6]
A & \\ 
 & B
\end{pmatrix}$. For a matrix $g=(g_{ij})_{i,j=1}^{n}$ in $\mathsf{GL}_n(\mathsf{R})$, we define the {\em top-left $k\times k$ block of} $g$ to be the matrix $(g_{ij})_{i,j=1}^{k}$.

\begin{theorem}\label{main} Let $\mathsf{R}$ be an integral domain, and $\Gamma_1, \Gamma_2$ be subgroups of $ \mathsf{GL}_n(\mathsf{R})$, $n \geq 2$.
\begin{enumerate}

\item \label{main-item1} Suppose that $\Lambda < \Gamma_1 \cap \Gamma_2$ is a subgroup of $\{1\}\times \mathsf{GL}_{n-1}(\mathsf{R})$ such that for $i=1,2$, a matrix $\gamma \in \Gamma_i$ lies in $\Lambda$ if and only if the top-left entry of $\gamma$ is $1$. Then the amalgam $\Gamma_1 \ast_\Lambda \Gamma_2$ admits a faithful representation into $\mathsf{GL}_{n+1}(\mathsf{R}[t])$, where $\mathsf{R}[t]$ denotes the ring of polynomials over $\mathsf{R}$.

\item \label{main-item2} Suppose that $\Lambda< \Gamma_1 \cap \Gamma_2$ is a subgroup of $\{I_k\}\times \mathsf{GL}_{n-k}(\mathsf{R})$, $2\leq k\leq n-1$, such that for $i =1,2$, a matrix $\gamma \in \Gamma_i$ lies in $\Lambda$ if and only if the top-left $k\times k$ block of $\gamma$ is the identity. Then the amalgam $\Gamma_1\ast_{\Lambda} \Gamma_{2}$ admits a faithful representation into $\mathsf{GL}_{n^2+1}(\mathsf{R}[s,t])$,  where $\mathsf{R}[s,t]$ denotes the ring of polynomials over $\mathsf{R}$ in two variables.
\end{enumerate}
\end{theorem}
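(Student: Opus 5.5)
The plan is to realize the amalgam inside a single matrix group. We embed $\Gamma_1$ in a standard way, embed $\Gamma_2$ the same way but conjugated by a unipotent matrix $u$ depending polynomially on $t$ that centralizes $\Lambda$, and then prove faithfulness by a leading-degree argument in $t$.

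For part (\ref{main-item1}), define $\iota:\mathsf{GL}_n(\mathsf{R})\to\mathsf{GL}_{n+1}(\mathsf{R})$ by $\iota(g)=\begin{pmatrix}[0.6] g & \\ & 1\end{pmatrix}$. Put $v=e_1-e_{n+1}$, $\varphi=e_1^*+e_{n+1}^*$ and $N=v\varphi$. Since $\varphi(v)=0$ we have $N^2=0$, so $u:=I+tN$ is unipotent with $u^{-1}=I-tN$. Every $\lambda\in\Lambda\subset\{1\}\times\mathsf{GL}_{n-1}(\mathsf{R})$ satisfies $\iota(\lambda)v=v$ and $\varphi\circ\iota(\lambda)=\varphi$, hence $\iota(\lambda)$ commutes with $u$. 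Therefore $\rho|_{\Gamma_1}=\iota$, $\rho|_{\Gamma_2}=u\,\iota(\cdot)\,u^{-1}$ defines a homomorphism $\rho:\Gamma_1\ast_\Lambda\Gamma_2\to\mathsf{GL}_{n+1}(\mathsf{R}[t])$.

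The key identity is $\varphi(\iota(g)v)=g_{11}-1$, which vanishes exactly when $g\in\Lambda$. This holds for $g\in\Gamma_1$ or $\Gamma_2$, since $g\in\Gamma_i$ lies in $\Lambda$ iff $g_{11}=1$. Any nontrivial element outside the factors is conjugate to a cyclically reduced word $w=g_1h_1\cdots g_mh_m$ with $g_j\in\Gamma_1\smallsetminus\Lambda$ and $h_j\in\Gamma_2\smallsetminus\Lambda$. Then
\[\rho(w)=\iota(g_1)(I+tN)\iota(h_1)(I-tN)\iota(g_2)\cdots(I-tN).\]
Expanding, the coefficient of the maximal power $t^{2m}$ is $\pm\,\iota(g_1)N\iota(h_1)N\cdots\iota(h_m)N$. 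Using $N\iota(x)N=(x_{11}-1)N$, this equals
\[\pm\prod_j(h_{j,11}-1)\prod_{j\ge2}(g_{j,11}-1)\cdot\iota(g_1)N.\]
Every factor is nonzero, $\mathsf{R}$ is a domain and $N\neq0$, so this coefficient is nonzero. Hence $\rho(w)$ has positive $t$-degree and $\rho(w)\neq I$. Elements in a single factor are handled directly, since $\iota$ is injective.

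For part (\ref{main-item2}) we reduce to part (\ref{main-item1}) over the ring $\mathsf{R}[s]$. Let $\mathsf{GL}_n$ act on $\mathsf{Mat}_n\cong\mathsf{R}^{n^2}$ by $X\mapsto gXg^{-1}$, which gives $\pi:\mathsf{GL}_n(\mathsf{R})\to\mathsf{GL}_{n^2}(\mathsf{R})$. We need a vector $x_s\in\mathsf{Mat}_n(\mathsf{R}[s])$ and a covector $\psi_s$ that are $\pi(\Lambda)$-invariant and satisfy $\psi_s(\pi(\gamma)x_s)=\psi_s(x_s)$ iff the top-left $k\times k$ block of $\gamma$ is the identity. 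As a first try, take $x_s=\sum_{i\le k}s^{i}E_{ii}(n)$, or a more generic diagonal-in-$s$ choice. For $\psi_s$, take a pairing such as $X\mapsto\sum_{i\le k}s^{-i}X_{ii}$, suitably cleared of denominators, or a pairing using left multiplication instead of conjugation. Both are fixed by $\Lambda\subset\{I_k\}\times\mathsf{GL}_{n-k}$. One then checks that, as a polynomial identity in $s$, equality of $\psi_s(\pi(\gamma)x_s)$ with its value at $\gamma=I$ forces the block to be $I_k$. After a change of basis sending $x_s$ to the first basis vector and $\psi_s$ to the first coordinate functional (normalized so the value is $1$), the hypothesis of part (\ref{main-item1}) is satisfied over $\mathsf{R}[s]$ in dimension $n^2$, giving $\mathsf{GL}_{n^2+1}(\mathsf{R}[s,t])$. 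Alternatively, we can run the degree argument above directly with $v,\varphi$ built from $x_s,\psi_s$, which avoids having to change basis over $\mathsf{R}[s]$.

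The main obstacle is exactly this choice of $(x_s,\psi_s)$. The plain conjugation action might not detect the whole $k\times k$ block, so we may need $\pi(g)X=gX$ on $\mathsf{Mat}_{n}$ together with a generic $s$-weighted combination of the $k^2$ conditions $\gamma_{ij}=\delta_{ij}$. The $s$-weights must be chosen so that a single scalar condition, holding identically in $s$, encodes all $k^2$ conditions. In addition, the resulting matrix must still have a vanishing pairing $\varphi(v)=0$ after embedding, so that $N$ is nilpotent. If the normalization to "entry $=1$" over $\mathsf{R}[s]$ is problematic, we would instead apply the degree argument of part (\ref{main-item1}) verbatim. That argument only uses that $\varphi(\iota(g)v)\neq0$ for $g\notin\Lambda$, and that $\Lambda$ fixes $v$ and $\varphi$.
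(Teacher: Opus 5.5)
Your part~(\ref{main-item1}) is correct, and it takes a different route from the paper. The paper places $\mathsf{GL}_n$ in $\{1\}\times\mathsf{GL}_n$ and conjugates $\Gamma_1$ by $u_t=I+tE_{12}(n+1)$ and $\Gamma_2$ by $\tau u_t$. It then argues by a degree count that alternating products are ``sufficiently transcendental'': the $(1,1)$ entry has maximal $t$-degree, strictly larger than that of every entry outside the first row, and this property is stable under products. You instead use the square-zero rank-one matrix $N=v\varphi$, and the identity $N\iota(x)N=(x_{11}-1)N$ computes the top $t$-coefficient of a cyclically reduced word exactly. Your version also isolates what is really needed: a $\Lambda$-fixed vector $v$ and a $\Lambda$-fixed covector $\varphi$ with $\varphi(v)=0$ and $\varphi(gv)\neq0$ for $g\in\Gamma_i\smallsetminus\Lambda$.

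Part~(\ref{main-item2}), however, is not proved. Its only real content is the choice of $(x_s,\psi_s)$, which you leave open, and the candidates you write down fail.

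\begin{itemize}
\item The conjugation action $X\mapsto gXg^{-1}$ cannot work for any choice, because it is trivial on scalars. For example, $\Gamma_1=\mathsf{GL}_k(\mathsf{R})\times\mathsf{GL}_{n-k}(\mathsf{R})$ and $\Lambda=\{I_k\}\times\mathsf{GL}_{n-k}(\mathsf{R})$ satisfy the hypotheses, yet $-I\in\Gamma_1\smallsetminus\Lambda$ passes every test when $\mathrm{char}\,\mathsf{R}\neq2$. More concretely, your $x_s=\sum_{i\le k}s^iE_{ii}(n)$ is fixed by every diagonal $g$.
\item Under left multiplication, your pairing gives $\psi_s(gx_s)=\sum_{i\le k}g_{ii}$, which is only the trace of the block. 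It misses, for example, $g=I+E_{12}(n)\in\Gamma_1$.
\end{itemize}

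What is missing is this: let $g$ act by left multiplication on $x=\mathrm{diag}(I_k,0)$, so that $gx$ records the first $k$ columns of $g$. Then weight the $k^2$ relevant entries by pairwise distinct powers of $s$, for instance
\[
\psi_s(X)=\sum_{i,j\le k}s^{k(i-1)+j-1}X_{ij},\qquad \psi_s(gx)-\psi_s(x)=\sum_{i,j\le k}(g_{ij}-\delta_{ij})\,s^{k(i-1)+j-1}.
\]
This difference vanishes if and only if the top-left $k\times k$ block of $g$ is $I_k$. Both $x$ and $\psi_s$ are $\Lambda$-invariant, since $\mathrm{diag}(I_k,\lambda')$ fixes $e_1,\ldots,e_k$ and does not change the first $k$ rows of $X$.

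Your fallback then goes through with $v=x-\psi_s(x)e_{n^2+1}$ and $\varphi=\psi_s+e_{n^2+1}^*$. This gives $\varphi(v)=0$ without dividing by $\psi_s(x)$, and lands in $\mathsf{GL}_{n^2+1}(\mathsf{R}[s,t])$.

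This is essentially the paper's proof. It uses left translation on $\mathsf{Mat}_n(\mathsf{R}[s])$, with a basis of the top-left block beginning $\mathcal{E}=\mathrm{diag}(I_k,0),E_{22}(n),\ldots,E_{kk}(n)$ and followed by the off-diagonal $E_{ij}(n)$, $i,j\le k$. It then conjugates by a unipotent $g_s$ whose first row is $(1,s,\ldots,s^{k^2-1},0,\ldots,0)$. The resulting top-left entry is $h_{11}+\sum_{i\ge2}(h_{ii}-h_{11})s^{i-1}$ plus the off-diagonal $h_{ij}$ times $s^k,\ldots,s^{k^2-1}$. This equals $1$ exactly when the block is $I_k$, so part~(\ref{main-item1}) applies directly over $\mathsf{R}[s]$, with the normalization built into the choice of basis.
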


\begin{proof}[Proof of Theorem \ref{main}(\ref{main-item1})] View $\mathsf{GL}_n(\mathsf{R})$ as the subgroup $\{1\} \times \mathsf{GL}_n(\mathsf{R}) < \mathsf{GL}_{n+1}(\mathsf{R}[t])$. Consider the matrices $u_t, \tau \in \mathsf{GL}_{n+1}(\mathsf{R}[t])$ given by

\[
u_t := \begin{pmatrix} 1 & t & \\ 0  & 1 &  \\  &  & I_{n-1} \end{pmatrix}, \> 
\tau : = \begin{pmatrix} 0 & 1 & \\  1 & 0 &  \\  &  & I_{n-1} \end{pmatrix},
\]
and let $\rho_t :\Gamma_1 \ast_\Lambda \Gamma_2 \rightarrow \mathsf{GL}_{n+1}(\mathsf{R}[t])$ be the representation induced by $\gamma_1 \mapsto u_t \gamma_1 u_t^{-1}$ on $\Gamma_1$ and $\gamma_2 \mapsto \tau u_t \gamma_2 u_t^{-1} \tau$ on $\Gamma_2$.

Say a matrix $M = (m_{ij})_{i,j = 1}^{n+1} \in \mathsf{Mat}_{n+1}(\mathsf{R}[t])$ is {\em sufficiently transcendental} if $\deg(m_{ij}) \leq \deg(m_{11})$ for $1 \leq i,j \leq n+1$ and $\deg(m_{ij}) < \deg(m_{11})$ for $i > 1$. Note that a product of two sufficiently transcendental matrices is sufficiently transcendental. Note also that for $i=1,2$, and $\gamma_i \in \Gamma_i \smallsetminus \Lambda$, the product $(u_t \gamma_1 u_t^{-1})(\tau u_t \gamma_2 u_t^{-1}\tau)$ is sufficiently transcendental; indeed, a direct computation shows that the top-left entry of $(u_t \gamma_1 u_t^{-1})(\tau u_t \gamma_2 u_t^{-1}\tau)$ is a degree-$2$ polynomial in $t$ whose leading coefficient is $\big((\gamma_1)_{11}-1\big)\big((\gamma_2)_{11}-1\big)$, while the remaining entries of $(u_t \gamma_1 u_t^{-1})(\tau u_t \gamma_2 u_t^{-1}\tau)$ (respectively, the entries of $(u_t \gamma_1 u_t^{-1})(\tau u_t \gamma_2 u_t^{-1}\tau)$ below the first column) are of degree $\leq 2$ (resp., are of degree $\leq 1$). It thus follows that for each element $\gamma \in \Gamma_1 \ast_\Lambda \Gamma_2$ of the form
\begin{equation*}\label{normalform}
\gamma = \gamma_1^{(1)}\gamma_2^{(1)}\cdots\gamma_1^{(r)}\gamma_2^{(r)},
\end{equation*}
where $\gamma_i^{(1)}, \ldots, \gamma_i^{(r)} \in \Gamma_i \smallsetminus \Lambda$ for $i=1,2$, the matrix $\rho_t(\gamma)$ is sufficiently transcendental, and hence different from the identity. Since every element of $\Gamma_1 \ast_\Lambda \Gamma_2$ is either conjugate into one of the factors or conjugate to an alternating product of even length (see, for instance, \cite[\S1.2,~Thm.~1]{MR607504}), we conclude that the representation $\rho_t$ is faithful.
\end{proof}

\begin{proof}[Proof of Theorem \ref{main} (\ref{main-item2})]

Consider the left translation representation $L:\mathsf{GL}_n(\mathsf{R}[s])\rightarrow \mathsf{GL}(\mathsf{Mat}_{n}(\mathsf{R}[s]))$, $$L(h)M=hM, \ M\in \mathsf{Mat}_n(\mathsf{R}[s]), h \in \mathsf{GL}_n(\mathsf{R}[s]).$$ Let $W,W'\subset \mathsf{Mat}_n(\mathsf{R}[s])$ be the complementary free $\mathsf{R}[s]$-submodules given by  \begin{align*} 
W &:=\Big\{\begin{pmatrix}[0.6]
X &  \\ 
 & 0
\end{pmatrix}: X\in \mathsf{Mat}_{k}(\mathsf{R}[s]) \Big\},\\  W'&:=\Big\{\begin{pmatrix}[0.6]
0 & \ast \\ 
\ast & Y
\end{pmatrix}: Y \in \mathsf{Mat}_{n-k}(\mathsf{R}[s]) \Big\}.\end{align*} Fix the ordered basis $\mathcal{B}_{1}:=\big(\mathcal{E}, E_{22}(n),\ldots, E_{kk}(n), E_{12}(n),\ldots,E_{k(k-1)}(n)\big)$ for $W$, where $\mathcal{E}:=\begin{pmatrix}[0.8]
I_{k} &  \\ 
 & 0
\end{pmatrix}$, and some ordered basis $\mathcal{B}_{2}$ for $W'$. Consider the unipotent element $g_{s}\in \mathsf{GL}(\mathsf{Mat}_n(\mathsf{R}[s]))$, $$g_{s}:=I_{n^2}+\sum_{j=1}^{k^2-1}s^{j}E_{1j}(k^2),$$ and the representation $L_{s}:\mathsf{GL}_n(\mathsf{R}[s]) \rightarrow \mathsf{GL}( \mathsf{Mat}_n(\mathsf{R}[s]))$ given by 
\begin{align*}L_{s}(h):=g_{s} L(h)g_{s}^{-1}, \ \  h \in \mathsf{GL}_n(\mathsf{R}[s]).\end{align*} For $h \in \mathsf{GL}_n(\mathsf{R})$, we may write $$L(h)\mathcal{E}=h_{11}\mathcal{E}+\sum_{i=2}^k(h_{ii}-h_{11})E_{ii}(n)+\sum_{i\neq j}h_{ij}E_{ij}(n)+M_{h}$$ for some $M_{h}\in W'$. In particular, with respect to the ordered basis $(\mathcal{B}_1,\mathcal{B}_2)$ for $\mathsf{Mat}_n(\mathsf{R}[s])$, the top-left entry of the matrix $L_{s}(h)\in \mathsf{GL}(\mathsf{Mat}_n(\mathsf{R}[s]))$ is the polynomial \begin{align*} \big( L_{s}(h)\big)_{11}=h_{11}+\sum_{i=2}^{k}\big(h_{ii}-h_{11}\big)s^{i-1}+h_{12}s^k+\cdots+h_{k(k-1)}s^{k^2-1}.\end{align*} 
Hence, for $h \in \mathsf{GL}_n(\mathsf{R})$, the top-left entry $\big(L_{s}(\gamma)\big)_{11}=1$ if and only if the top-left $k\times k$ block of $h$ is the identity. By assumption, for $\gamma \in \Gamma_i$, $i=1,2$, the latter holds if and only if $\gamma \in \Lambda$. Moreover, on $\Lambda$, the representations $L$ and $L_s$ coincide, act as the identity on $W$, and preserve the subspace~$W'$. Therefore, by applying Theorem \ref{main} (\ref {main-item1}) for the groups $L_{s}(\Gamma_{1})$ and~$L_{s}(\Gamma_2)$ and their common subgroup $L_{s}(\Lambda)$, we obtain a faithful representation  $\rho:\Gamma_1 \ast_{\Lambda} \Gamma_{2} \rightarrow \mathsf{GL}_{n^2+1}(\mathsf{R}[s,t])$. This completes the proof of the theorem. \end{proof}

\begin{Exmp}
\normalfont{
Let $K$ be a number field whose stufe is $4$, so that $K$ is in particular totally complex, and denote by $\mathcal{O}_K$ the ring of integers of $K$. Then, since the quadratic form ${x_1^2+x_2^2+x_3^2}$ is anisotropic over $K$, the Borel--Harish-Chandra theorem \cite{MR147566,MR141672} implies that the group $\Gamma := \mathsf{SO}_3(\mathcal{O}_K)$ embeds as an irreducible cocompact lattice in a product of $d/2$ copies of $\mathsf{SO}_3(\mathbb{C}) \cong \mathsf{PSL}_2(\mathbb{C})$. Moreover, it follows immediately from Theorem~\ref{main}(\ref{main-item1}) that the double $\Gamma \ast_\Lambda \Gamma$ is linear if $\Lambda$ is taken to be the stabilizer in $\Gamma$ of the standard basis vector $e_1$. Note that for $d>2$ this is an example to which Theorem~\ref{compact-1} will not apply, since it is indeed not difficult to verify via Margulis superrigidity \cite{Mar} that for such $d$ the image of $\Lambda$ under no faithful finite-dimensional real representation of $\Gamma$ is precompact. For an example with $d>2$, one can for instance take $K =\mathbb{Q}(\sqrt{-7}, \sqrt{17})$. Indeed, the $2$-adics $\mathbb{Q}_2$ contain square roots of $-7$ and $17$ by Hensel's lemma, and the stufe of $\mathbb{Q}_2$ is $4$.
}
\end{Exmp}

\section{Doubling along intersections with compact subgroups}\label{amalgamsalongprecompact}

In this section we prove Theorem \ref{compact-1} and establish some consequences. We first deduce from Theorem~\ref{main} the following technical lemma, which will also be useful in the following section.

\begin{lemma} \label{multiple} Let $\Gamma_{1},\Gamma_2$ be groups with a common subgroup $\Lambda$. Suppose we are given for some integer $p \geq2$, for $i=1,\ldots,p$, for $j=1,2$, and for some integral domain $\mathsf{R}$ representations  \hbox{$\rho_{i}^j:\Gamma_{j}\rightarrow \mathsf{GL}_{n_i}(\mathsf{R})$}, at least one of which is faithful, satisfying the following properties:
\begin{itemize}
\item $\rho_{i}^j(\Lambda)\subset \{1\}\times \mathsf{GL}_{n_i-1}(\mathsf{R})$;
\item the representations $\rho_i^1$ and $\rho_i^2$ coincide on $\Lambda$; and
\item if $\gamma \in \Gamma_{j}$ and the top-left entry of each of the $\rho_i^j(\gamma)$ is equal to $1$, then $\gamma \in \Lambda$.\end{itemize}
\noindent Then there is a faithful representation of  $\Gamma_1 \ast_{\Lambda}\Gamma_{2}$ into $\mathsf{GL}_{r}(\mathsf{R}[s,t])$, where \hbox{$r=(\sum_{i=1}^p n_i)^2+1$.}\end{lemma}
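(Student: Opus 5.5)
The idea is to assemble the given representations into one block-diagonal representation and then apply Theorem~\ref{main}(\ref{main-item2}) with $k=p$. Set $N:=\sum_{i=1}^p n_i$ and, for $j=1,2$, define $\rho^j:=\rho_1^j\oplus\cdots\oplus\rho_p^j:\Gamma_j\to\mathsf{GL}_N(\mathsf{R})$. Each $\rho^j$ is faithful, since one of the summands is faithful; we may assume that summand is $\rho_{i_0}^j$ for the relevant $j$. If only one $\rho_{i_0}^{j_0}$ is assumed faithful, we argue as follows. If $\rho^{j}(\gamma)=I$, then every top-left entry equals $1$, so $\gamma\in\Lambda$ by the third bullet. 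Then $\rho_i^j(\gamma)=\rho_i^{j'}(\gamma)$ for all $i$, by the second bullet, so $\rho^{j'}(\gamma)=I$ as well. Hence faithfulness transfers: both $\rho^1,\rho^2$ vanish on the same elements of $\Lambda$, and injectivity on $\Lambda$ follows from whichever one is faithful.

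\textbf{Step 1: reorder coordinates.} Conjugate by a permutation matrix $P\in\mathsf{GL}_N(\mathsf{R})$ that moves the first basis vector of each block $\mathsf{R}^{n_i}$ to position $i$, for $i=1,\dots,p$, and keeps the remaining coordinates in order afterwards. Write $\sigma^j:=P\rho^jP^{-1}$. By the first bullet, $\sigma^j(\Lambda)\subset\{I_p\}\times\mathsf{GL}_{N-p}(\mathsf{R})$; the first $p$ basis vectors are fixed because each block fixes its first vector and preserves the complementary span. The top-left $p\times p$ block of $\sigma^j(\gamma)$ is the diagonal matrix whose $i$-th entry is $(\rho_i^j(\gamma))_{11}$, since $\rho^j$ is block diagonal. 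Consequently, if that block is $I_p$, then $\gamma\in\Lambda$ by the third bullet, and the converse holds by the first bullet.

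\textbf{Step 2: pass to image groups.} Set $\Gamma_j':=\sigma^j(\Gamma_j)$ and $\Lambda':=\sigma^1(\Lambda)=\sigma^2(\Lambda)$. We must check that $\sigma^j$ is an isomorphism onto its image for both $j$; this is the step to handle carefully. If $\sigma^j(\gamma)=I$, then $\gamma\in\Lambda$ as shown above, and injectivity on $\Lambda$ follows from the faithful one, as argued. The natural map $\Gamma_1\ast_\Lambda\Gamma_2\to\Gamma_1'\ast_{\Lambda'}\Gamma_2'$ is then an isomorphism. The membership criterion also transfers: an element $\sigma^j(\gamma)\in\Gamma_j'$ has top-left $p\times p$ block $I_p$ if and only if $\gamma\in\Lambda$, if and only if $\sigma^j(\gamma)\in\Lambda'$.

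\textbf{Step 3: apply the earlier result.} If $2\le p\le N-1$, Theorem~\ref{main}(\ref{main-item2}) with $n=N$ and $k=p$ gives a faithful representation into $\mathsf{GL}_{N^2+1}(\mathsf{R}[s,t])$, as required. In the edge case $p=N$, all $n_i=1$ and $\Lambda$ is trivial. One can then pad each $\rho^j$ by adding a trivial $1$-dimensional summand at the end, giving size $N+1$ and $k=p\le N$, and then apply Theorem~\ref{main}(\ref{main-item2}). This yields dimension $(N+1)^2+1$, which does not quite match; it is cleaner to treat this case directly. A free product of linear groups over $\mathsf{R}$ embeds via Theorem~\ref{main}(\ref{main-item1}) after the same padding, landing in $\mathsf{GL}_{N+2}(\mathsf{R}[t])\subset\mathsf{GL}_{N^2+1}(\mathsf{R}[s,t])$ (since $N\ge 2$). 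This uses that the top-left entry is $1$ only on the identity, which holds because all entries are $1$ exactly on $\Lambda=1$. The main obstacle is simply this bookkeeping of the faithfulness transfer and the degenerate dimension case; the core is Step 1.
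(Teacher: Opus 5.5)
Your main argument is the same as the paper's. You take the block sum of the $\rho_i^j$ and permute the basis so that the top-left $p\times p$ block becomes $\mathrm{diag}\big((\rho_1^j(\gamma))_{11},\ldots,(\rho_p^j(\gamma))_{11}\big)$. You then apply Theorem~\ref{main}(\ref{main-item2}) with $n=N$ and $k=p$. Your faithfulness transfer is correct: if $\rho^{j'}(\gamma)=I$, then $\gamma\in\Lambda$ by the third bullet, so $\rho^{j_0}(\gamma)=I$ by the second. It also fills in a point the paper passes over when it simply asserts that both $\psi_j$ are faithful.

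The step that fails is your direct treatment of the degenerate case $p=N$, where all $n_i=1$ and $\Lambda=1$; the paper's proof does not address this case either. Theorem~\ref{main}(\ref{main-item1}) needs the single top-left entry to detect $\Lambda=1$. However, the top-left entry of $\mathrm{diag}(\rho_1^j(\gamma),\ldots,\rho_p^j(\gamma),1)$ is just $\rho_1^j(\gamma)$. Your justification, that all diagonal entries equal $1$ only on the identity, says nothing about one entry alone.

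Here is a concrete counterexample over $\mathsf{R}=\mathbb{Q}$. Take $\Gamma_1=\mathbb{Z}$ with characters $n\mapsto 2^n$ and $n\mapsto 1$, and $\Gamma_2=\mathbb{Z}^2$ with characters $(a,b)\mapsto 2^a$ and $(a,b)\mapsto 3^b$. All hypotheses of the lemma hold, yet no single character of $\Gamma_2$ is injective, so no reordering of the blocks rescues the argument. Your padding alternative is valid, but it only gives dimension $(N+1)^2+1$.

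The right fix is to note that the proof of Theorem~\ref{main}(\ref{main-item2}) works verbatim for $k=n$. In that case $W'=0$ and $\mathcal{E}=I_n$. The top-left entry of $L_s(h)$ is a polynomial in $s$ whose coefficients are $h_{11}$, the differences $h_{ii}-h_{11}$, and the off-diagonal $h_{ij}$. So it equals $1$ if and only if $h=I$. Theorem~\ref{main}(\ref{main-item1}) then gives an embedding into $\mathsf{GL}_{N^2+1}(\mathsf{R}[s,t])$, which is exactly the required dimension.
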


\begin{proof} For $j=1,2$, let $\times_{i=1}^{p}\rho_{i}^j:\Gamma_{j}\rightarrow \mathsf{GL}_{d}(\mathsf{R})$ be the $p$-fold product of the representations $\rho_{1}^j,\ldots ,\rho_{p}^j$, where $d=\sum_{i=1}^p n_i$. For $i=1,2$, let $(e_{i1},\ldots, e_{in_i})$ be the standard basis of $\mathsf{R}^{n_i}$. By reordering the basis $(e_{11},\ldots,e_{1n_1}, \ldots, e_{p1},\ldots,e_{pn_p})$ of $\bigoplus_{i=1}^p \mathsf{R}^{n_i}$ as $(e_{11},\ldots, e_{p1}, \ldots, e_{p2},\ldots,e_{pn_p})$, we obtain a faithful represention $\psi_{j}:\Gamma_{j} \rightarrow \mathsf{GL}_{d}(\mathsf{R})$, conjugate to $\times_{i=1}^{p}\rho_{i}^j$, with the property that for any $\gamma \in \Gamma_j$ the top left $p\times p$ block of $\psi_j(\gamma)$ is a diagonal matrix whose diagonal entries are the top-left entries of the $\rho_i^j(\gamma)$. Thus, by assumption, if $\gamma\in \Gamma_{j}$ and the top left $p\times p$ block of $\psi_j(\gamma)$ is the identity matrix, then $\gamma \in \Lambda$. Therefore, by applying Theorem \ref{main}(\ref{main-item2}) for the groups $\psi_{1}(\Gamma_{1}), \psi_2(\Gamma_2)$ and their common subgroup $\psi_{1}(\Lambda)$, we obtain a faithful representation of $\Gamma_1 \ast_{\Lambda}\Gamma_{2}$ into $\mathsf{GL}_r(\mathsf{R}[s,t])$, where $r=d^2+1$.\end{proof}

For an integral domain $\mathsf{R}$, we denote by $\textup{Sym}(\mathsf{R})$ the free $\mathsf{R}$-module of $r \times r$ symmetric matrices over $\mathsf{R}$, and by $\textup{Sym}_r:\mathsf{SL}_{r}(\mathsf{R})\rightarrow \mathsf{SL}(\textup{Sym}(\mathsf{R}))$ the representation given by 
\[
\textup{Sym}_r(g)M:=gMg^t
\]
for $M \in\textup{Sym}(\mathsf{R})$. We first establish the following special case of Theorem \ref{compact-1}, where the subgroup over which one doubles is the stabilizer of a line in the special orthogonal group. 

\begin{theorem}\label{compact-1'} Let $\mathsf{R}$ be a subring of $\mathbb{R}$. Let $\mathsf{C}$ be the stabilizer of the standard basis vector $e_1$ in $\mathsf{SO}(n)$, $n\geq 2$, and let $\Lambda:= \mathsf{C} \cap \mathsf{SL}_n(\mathsf{R})$. Then there is a faithful representation of the amalgam $\mathsf{SL}_n(\mathsf{R}) \ast_{\Lambda}\mathsf{SL}_n(\mathsf{R})$ into $\mathsf{SL}_{d}(\mathsf{R}[s,t])$, where $d=\big(n^2+3n+2\big)^2+1$.\end{theorem}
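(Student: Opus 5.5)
The plan is to apply Lemma~\ref{multiple} with $\Gamma_1=\Gamma_2=\mathsf{SL}_n(\mathsf{R})$ and $p=2$, using two representations whose top-left entries together detect membership in $\Lambda$. Since $\mathsf{R}\subset\mathbb{R}$, a matrix $g\in \mathsf{SL}_n(\mathsf{R})$ lies in $\Lambda$ if and only if $gg^t=I_n$ and $g_{11}=1$. Indeed, if $g$ is orthogonal, its first column is a unit vector, so $g_{11}=1$ forces $ge_1=e_1$. Orthogonality then also forces the first row to be $e_1^t$, so $\Lambda\subset\{1\}\times\mathsf{SL}_{n-1}(\mathsf{R})$.

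For the first pair $\rho^1_2=\rho^2_2$, I take the standard representation on $\mathsf{R}^n$. Its top-left entry is $g_{11}$, and by the remark above it maps $\Lambda$ into $\{1\}\times \mathsf{GL}_{n-1}(\mathsf{R})$. This representation is faithful, as Lemma~\ref{multiple} requires.

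For $\rho^1_1=\rho^2_1$, I want a representation whose top-left entry equals $1$ exactly when $g$ is orthogonal. The key observation is that for $g\in\mathsf{SL}_n(\mathbb{R})$ the matrix $gg^t$ is positive definite with determinant $1$. By AM--GM, $\operatorname{tr}(gg^t)\geq n$, with equality if and only if $gg^t=I_n$. So I will realize $\operatorname{tr}(gg^t)-(n-1)$ as a top-left entry.

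Let $V=\textup{Sym}(\mathsf{R})\oplus\mathsf{R}$, with $g$ acting by $\textup{Sym}_n(g)$ on the first summand and trivially on the second. Set $v=(I_n,1)$ and $\varphi(M,c)=\operatorname{tr}M-(n-1)c$. Then $\varphi(v)=1$, so $V=\mathsf{R}v\oplus\ker\varphi$ over $\mathsf{R}$: for any $x\in V$, the element $x-\varphi(x)v$ lies in $\ker\varphi$. Elements of $\Lambda$ fix $v$, because $gI_ng^t=I_n$. They also preserve $\ker\varphi$, since orthogonal matrices preserve traces of symmetric matrices and act trivially on the $\mathsf{R}$-summand.

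Next, choose a basis of $V$ starting with $v$ and completing it by a basis of $\ker\varphi$. Explicitly, I would use $(E_{ii}(n),\,0)$ corrected by multiples of $v$, the off-diagonal symmetric elementary matrices, and one further vector. With respect to such a basis, $\Lambda$ lands in $\{1\}\times\mathsf{GL}$. The top-left entry of the image of $g$ is
\[
\varphi(gI_ng^t,1)=\operatorname{tr}(gg^t)-(n-1).
\]
This equals $1$ if and only if $g$ is orthogonal, by the AM--GM observation. Combined with the first representation, the third hypothesis of Lemma~\ref{multiple} holds.

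If one wants exactly the stated dimension, pad each representation by a trivial summand so that $n_1+n_2=(n+1)(n+2)$; this changes neither the block form on $\Lambda$ nor the top-left entries. Lemma~\ref{multiple} then yields a faithful representation into $\mathsf{GL}_d(\mathsf{R}[s,t])$ with $d=(n^2+3n+2)^2+1$. The image lies in $\mathsf{SL}_d$ for two reasons. First, $\det\textup{Sym}_n(g)=\det(g)^{n+1}=1$. Second, the constructions in Lemma~\ref{multiple} and Theorem~\ref{main} only apply a left-translation representation, which sends determinant-one matrices to determinant-one matrices, and conjugate by unipotents and a permutation matrix.

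I expect the main obstacle to be the integrality issue: over a general subring $\mathsf{R}$ one cannot divide by $n$ to isolate $\operatorname{tr}/n$ as a top-left coefficient. The auxiliary trivial summand with the functional $\varphi$ is exactly designed to get around this. The remaining detail to confirm is that $\ker\varphi$ is free over $\mathsf{R}$, via an explicit basis, so that Lemma~\ref{multiple} applies verbatim.
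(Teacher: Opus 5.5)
Your proposal is correct. It shares the paper's skeleton: Lemma~\ref{multiple} with $p=2$, using one representation whose top-left entry is an affine function of $\operatorname{tr}(gg^t)$ (so that AM--GM detects orthogonality) and one whose top-left entry reads off the first column. The representations themselves differ. The paper uses $\textup{Sym}_{n+1}\circ\iota$ twice. In a basis starting with $I_{n+1}$ and continuing with traceless matrices, the top-left entry is $(\operatorname{tr}(gg^t)+1)/(n+1)$. In a basis starting with $E_{11}(n+1)$, the top-left entry is $g_{11}^2$.

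Your choices buy three things.
\begin{enumerate}
\item Integrality. $\mathsf{R}I_{n+1}\oplus V_n$ is all of $\textup{Sym}(\mathsf{R}^{n+1})$ only when $n+1$ is invertible in $\mathsf{R}$. For instance, with $\mathsf{R}=\mathbb{Z}$, $n=2$ and $g=I+E_{12}$, the paper's top-left entry is $4/3$. This is harmless in the proof of Theorem~\ref{compact-1}, where $\mathsf{R}=K$ is a field. Your functional $\varphi(M,c)=\operatorname{tr}M-(n-1)c$, with $\varphi(v)=1$, works over any subring.
\item Sharper detection. Orthogonality together with $g_{11}^2=1$ only yields $ge_1=\pm e_1$. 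For example, $\mathrm{diag}(-1,-1,1,\ldots,1)$ passes both of the paper's tests without fixing $e_1$. So the paper's argument is really adapted to the stabilizer of the line $\mathbb{R}e_1$ in $\mathsf{SO}(n)$. That is the version announced just before the statement and used in the proof of Theorem~\ref{compact-1}, via $\{\pm1\}\times\mathsf{O}(W;K)$. Your test $g_{11}=1$ proves the statement exactly as written.
\item Smaller dimension. Before padding you get $\bigl(\tfrac{(n+1)(n+2)}{2}\bigr)^2+1$.
\end{enumerate}

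On the other side, the paper's $\iota$ is what makes its $\textup{Sym}$-representations faithful, since $\textup{Sym}_n$ kills $-I_n$ for even $n$. You rightly do not need this: Lemma~\ref{multiple} asks for only one faithful representation, and your standard one is. If one wants the line-stabilizer version over an arbitrary subring, one can combine your $\varphi$-representation with the paper's $E_{11}(n+1)$-representation, which is integral and faithful.

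One small slip: your explicit basis of $\ker\varphi$ has one vector too many. The $n$ vectors $(E_{ii}(n),0)-v$, the $\binom{n}{2}$ vectors $(E_{ij}(n)+E_{ji}(n),0)$ for $i<j$, and $v$ already form a basis of $V$. Indeed, $(E_{ii}(n),0)$ and $(0,1)=v-\sum_i(E_{ii}(n),0)$ lie in their span. Hence $\ker\varphi$ is free of rank $\binom{n+1}{2}$, and no further vector is needed.
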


\begin{proof} Consider the embedding $ \iota:\mathsf{SL}_n(\mathsf{R})\xhookrightarrow{}\mathsf{SL}_{n+1}(\mathsf{R}),\  \iota (g)=
\begin{pmatrix}[0.8]
g &  \\ 
  & 1
\end{pmatrix}$, and fix ordered bases $\mathcal{B}_{n}$ and $\mathcal{B}_n'$ for $\textup{Sym}(\mathsf{R}^{n+1})$ such that
\begin{itemize}
\item the first basis vector in $\mathcal{B}_{n}$ (respectively, in $\mathcal{B}_n'$) is the identity matrix $I_{n+1}$ (resp., is the matrix $E_{11}(n+1)$), and
\item the remaining basis vectors in $\mathcal{B}_{n}$ (respectively, in $\mathcal{B}_n'$) lie in the $\mathsf{R}$-submodule $V_n:=\{M \in \textup{Sym}(\mathbb{R}^{n+1}):\textup{tr}(M)=0\}$ (resp., in \hbox{$V_n':=\{M \in \textup{Sym}(\mathbb{R}^{n+1}):\textup{tr}(ME_{11}(n+1))=0\}$).}
\end{itemize}
With respect to the bases $\mathcal{B}_n, \mathcal{B}_{n}'$, we obtain two faithful representations $\rho_n, \rho_n': \mathsf{SL}_{n}(\mathsf{R})\rightarrow \mathsf{SL}_{r}(\mathsf{R})$ conjugate to the representation $\textup{Sym}_{n+1}\circ \iota:\mathsf{SL}_n(\mathsf{R})\rightarrow \mathsf{SL}(\textup{Sym}(\mathsf{R}^{n+1}))$, where $r=\frac{(n+2)(n+1)}{2}$, satisfying for any $g\in \mathsf{SL}_n(\mathsf{R})$, \begin{align}\label{trace-equation}\big(\rho_n(g)\big)_{11}&=\frac{\textup{tr}(gg^t)+1}{n+1},\\ \big(\rho_n'(g)\big)_{11}&=(g_{11})^2. \end{align} Moreover, observe that if $g\in \Lambda$, then $\textup{Sym}_{n+1}(\iota(g))$ fixes both $I_{n+1},E_{11}(n+1)\in \textup{Sym}(\mathsf{R}^{n+1})$ and preserves the hyperplanes $V_n$ and $V_n'$. In other words, $\rho_n(\Lambda)$ and $\rho_n'(\Lambda)$ are subgroups of $\{1\}\times \mathsf{SL}_{r-1}(\mathsf{R}) \subset \mathsf{SL}_r(\mathsf{R})$.

\par Now we check that  $\rho_{n}$ and $\rho_n'$ satisfy the assumptions of Lemma \ref{multiple}. To check that, suppose $g\in \mathsf{SL}_n(\mathsf{R})$ satisfies $$\big(\rho_n(g)\big)_{11}=\big(\rho_n'(g)\big)_{11}=1.$$ By (\ref{trace-equation}), we have $\textup{tr}(gg^t)=n$, and since $gg^t$ is positive-definite and $\textup{det}(gg^t)=1$, we conclude that $gg^t=I_{n}$. In addition, since $(g_{11})^2=1$ and the columns of $g$ form an orthonormal basis of $\mathbb{R}^n$, we also have $g\in \mathsf{C}$. Thus, by applying Lemma \ref{multiple} for $\rho_n$ and $\rho_n'$, we obtain a faithful representation of $\mathsf{SL}_n(\mathsf{R}) \ast_{\Lambda}\mathsf{SL}_n(\mathsf{R})$ into $\mathsf{SL}_{d}(\mathsf{R}[s,t])$, where $d=(2r)^2+1$. \end{proof}

Now let $K$ be a subfield of $\mathbb{R}$, and for $n,r\in \mathbb{N}$, let $V_{n,r}\subset K[x_{11},\ldots,x_{nn}]$ denote the vector subspace of polynomials over $K$ of degree at most $r$. To prove Theorem~\ref{compact-1}, we will use the following proposition due to Chevalley; see also \cite[11.2, Ch. IV]{Humphreys}.

\begin{proposition}\label{Chevalley-embedding} Let $\mathsf{H}<\mathsf{SL}_n(K)$ be a $K$-subgroup defined by polynomials of degree at most~$r$ in $K[x_{11},\ldots,x_{nn}]$. There is a faithful $K$-polynomial representation \hbox{$\rho_{\mathsf{H}}:\mathsf{SL}_n(K)\rightarrow \mathsf{SL}(V_{n,r})$} and a $K$-subspace $V_{\mathsf{H}}\subset V_{n,r}$ such that $\mathsf{H}=\big\{g\in \mathsf{SL}_n(K): \rho_{\mathsf{H}}(g)V_{\mathsf{H}}=V_{\mathsf{H}}\big\}$.\end{proposition}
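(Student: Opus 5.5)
This is Chevalley's theorem, which in its standard form (Humphreys, Thm.~11.2) produces some finite-dimensional representation together with a line whose stabilizer is $\mathsf{H}$. Here we need a specific target: the space $V_{n,r}$ of polynomials of degree at most $r$, and a subspace rather than a line. So I will adapt the standard proof directly.

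\textbf{The representation.} Let $\mathsf{SL}_n(K)$ act on $K[x_{11},\ldots,x_{nn}]$ by right translation,
\[
(\rho(g)f)(x)=f(xg),
\]
where $x=(x_{ij})$ is the generic matrix. Each entry of $xg$ is linear in the $x_{ij}$. Hence right translation preserves total degree, and so preserves $V_{n,r}$. Set $\rho_{\mathsf{H}}(g):=\rho(g)|_{V_{n,r}}$.

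\begin{itemize}
\item This is a homomorphism: $\rho(g)\rho(h)f(x)=f(xgh)$, so $\rho(gh)=\rho(g)\rho(h)$.
\item Its matrix entries are polynomials over $K$ in the entries of $g$.
\item It is faithful once $r\geq 1$. If $\rho(g)$ is the identity, then applying it to the coordinate functions $x_{ij}$ gives $xg=x$ identically, so $g=I$.
\end{itemize}

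\textbf{Landing in $\mathsf{SL}$.} To see that $\rho_{\mathsf{H}}$ takes values in $\mathsf{SL}(V_{n,r})$, use that $V_{n,r}$ is graded by homogeneous degree. On each homogeneous piece the determinant of $\rho(g)$ is a character of $\mathsf{SL}_n$. Since $\mathsf{SL}_n(K)$ is perfect for $n\geq 3$, and more generally has no nontrivial algebraic characters, this determinant is $1$. If this is troublesome, the determinant condition is inessential and one could simply note it.

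\textbf{The subspace.} Let $I\subset K[x_{ij}]$ be the ideal of polynomials vanishing on $\mathsf{H}$. Set
\[
V_{\mathsf{H}}:=I\cap V_{n,r}.
\]
By hypothesis, $\mathsf{H}$ is the common zero set in $\mathsf{SL}_n(K)$ of some polynomials $f_1,\ldots,f_m\in V_{\mathsf{H}}$.

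\textbf{Stabilizer contains $\mathsf{H}$.} Let $h\in \mathsf{H}$ and $f\in V_{\mathsf{H}}$. For any $y\in\mathsf{H}$ we have $yh\in\mathsf{H}$, so $(\rho(h)f)(y)=f(yh)=0$. Thus $\rho(h)f$ vanishes on $\mathsf{H}$. It also has degree at most $r$, so $\rho(h)V_{\mathsf{H}}\subset V_{\mathsf{H}}$, with equality by finite-dimensionality.

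\textbf{Stabilizer is contained in $\mathsf{H}$.} Suppose $\rho(g)V_{\mathsf{H}}=V_{\mathsf{H}}$. Then each $\rho(g)f_k$ lies in $V_{\mathsf{H}}$, so it vanishes at $I_n\in\mathsf{H}$. This gives $f_k(g)=(\rho(g)f_k)(I_n)=0$ for all $k$, hence $g\in\mathsf{H}$.

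\textbf{Main obstacle.} The one subtle point is making sense of ``the polynomials vanishing on $\mathsf{H}$'' when $K$ is not algebraically closed. To keep the argument clean, I use vanishing on the set $\mathsf{H}\subset\mathsf{SL}_n(K)$ of $K$-points itself, not on a Zariski closure over $\overline{K}$. The argument above only ever evaluates at $K$-points, so the equality of stabilizers holds as subsets of $\mathsf{SL}_n(K)$, which is exactly what the statement asserts.
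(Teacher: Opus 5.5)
Your argument is correct and is essentially the standard proof of Chevalley's theorem (Humphreys, \S11.2) that the paper cites instead of proving: right translation on the translation-stable space $V_{n,r}$, with $V_{\mathsf{H}}$ the degree-$\leq r$ part of the vanishing ideal of $\mathsf{H}$, stopping before the passage to an exterior power because only a subspace is required. Your caveat that faithfulness needs $r\geq 1$ is the right one to flag, since for $r=0$ the statement degenerates. Working only with $K$-points is legitimate, because the claimed equality is one of subsets of $\mathsf{SL}_n(K)$.
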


\begin{proof}[Proof of Theorem \ref{compact-1}] By Proposition \ref{Chevalley-embedding}, there is a faithful $K$-polynomial representation $\rho_{\mathsf{C}}:\mathsf{SL}_{n}(K)\rightarrow \mathsf{SL}_{d_n}(K)$ 
satisfying
\[
\Lambda =\big \{g\in \mathsf{SL}_{n}(K): \rho_{\mathsf{C}}(g)\textup{span}_K(e_1,\ldots,e_s)=\textup{span}_K(e_1,\ldots,e_s)\big \}
\]
and $\rho_{\mathsf{C}}(\Lambda)\subset \mathsf{O}(d_n; K)$, where $d_n:=\textup{dim}(V_{n,r})$ and $1 \leq s \leq d_n-1$. By composing $\rho_{\mathsf{C}}$ first with the embedding $\mathsf{SL}(K^{d_n})\xhookrightarrow{} \mathsf{SL}(K^{d_n})\times \{1\}$ and then with $\wedge^s:\mathsf{SL}(K^{d_n+1})\rightarrow \mathsf{SL}(\wedge^{s}K^{d_n+1})$, we obtain a faithful representation $\rho_{\mathsf{C},s}:\mathsf{SL}_n(K)\rightarrow \mathsf{SL}(\wedge^s K^{d_n+1})$ satisfying $$\rho_{\mathsf{C},s}(\Lambda)=\rho_{\mathsf{C},s}(\mathsf{SL}_n(K))\cap \big(\{\pm 1\}\times \mathsf{O}(W;K)\big),$$ where $W:=\textup{span}_K\big(\{e_{i_1}\wedge\cdots \wedge e_{i_s}: \{i_1,\ldots i_s\}\neq \{1,\ldots,s\}\} \big).$ Therefore, there is an embedding 
\[
\mathsf{SL}_n(K)\ast_\Lambda \mathsf{SL}_n(K)\xhookrightarrow{}   \mathsf{SL}(\wedge^s K^{d_n+1}) \ast_{\Lambda'} \mathsf{SL}(\wedge^s K^{d_n+1}),
\]
where $\Lambda':=\mathsf{SL}(\wedge^s K^{d_n}) \cap (\{\pm 1\}\times \mathsf{O}(W;K)) $. Since $\binom{d_n+1}{\lceil\frac{d_n+1}{2}\rceil}\geq \binom{d_n}{s}$, by Theorem~\ref{compact-1'} there is a faithful representation $\varphi_{\mathsf{C}}:\mathsf{SL}_n(K) \ast_\Lambda \mathsf{SL}_n(K)\rightarrow \mathsf{SL}_q(K[s,t])$ with $q=((d_n')^2+3d_n'+2)^2+1$, $d_n'=\binom{d_n+1}{\lceil \frac{d_n+1}{2} \rceil}$ and $d_n=\textup{dim}(V_{n,r})=\binom{n^2+r}{r}-1$. Finally, note that $d_n'\leq 2^{d_n}$ and hence $q\leq 2^{4\binom{n^2+r}{r}-3}$. \end{proof}

\begin{proof}[Proof~of~Corollary~\ref{compactcorollaries}(\ref{compact-2})]
We first claim there is a finite-index subgroup $\Gamma_\Lambda$ of $\Gamma$ such that $\Gamma_\Lambda \cap \mathsf{L} = \Lambda$. Indeed, let $1, \gamma_1, \ldots, \gamma_r$ be a full set of representatives of $(\Gamma \cap \mathsf{L})/\Lambda$. We may then take $\Gamma_\Lambda$ to be any finite-index subgroup of $\Gamma$ containing $\Lambda$ but excluding $\gamma_1, \ldots, \gamma_r$; the existence of such a finite-index subgroup of $\Gamma$ is guaranteed by separability of $\Lambda$ in $\Gamma$.

Viewing $V$ now as a representation of $\Gamma_\Lambda$, let $\rho: \Gamma \rightarrow \mathsf{GL}(W)$ be the representation induced on~$\Gamma$, so that $V$ is a sub-$\Gamma_\Lambda$-representation of $W$. Let $\mathsf{C}$ be the Zariski-closure of~$\rho(\Lambda)$ in~$\mathsf{GL}(W)$, and let $\mathsf{H}$ be the algebraic subgroup of $\mathsf{GL}(W)$ consisting of those elements preserving~$V$ and whose restriction to $V$ belongs to $\mathsf{L}$. Since $\mathsf{L} \subset \mathsf{H}$ and $\rho(\Gamma) \cap \mathsf{H} = \rho(\Lambda)$, we have $\rho(\Gamma) \cap \mathsf{L} = \rho(\Lambda)$. Moreover, from the description of the induced representation, one sees that since $\Lambda$ preserves a metric on $V$, the same is true of $\rho(\Lambda)$ on $W$, so that $\mathsf{L}$ is compact. The conclusion of Corollary \ref{compactcorollaries} (\ref{compact-2}) now follows from Theorem~\ref{compact-1}, together with the fact that, for any subfield $K \subset \mathbb{R}$, the domain $K[s,t]$ embeds in $\mathbb{C}$, so that any group that is $K[s,t]$-linear is $\mathbb{C}$-linear and thus $\mathbb{R}$-linear via restriction of scalars.
\end{proof}

We remark that embeddability of $K[s,t]$ in $\mathbb{C}$ is clear if $K$ is, for instance, countable, but indeed any characteristic-zero domain of cardinality at most that of $\mathbb{C}$ embeds in $\mathbb{C}$.

\begin{proof}[Proof of Corollary \ref{compactcorollaries}(\ref{precompactgaloisconjugate})] We may extend the embedding $\sigma:K \rightarrow \mathbb{C}$ to an automorphism of $\mathbb{C}$, which we continue to denote by $\sigma$. We have assumed that $\mathsf{G} = {\bf G}(\mathbb{C})$ for some $\mathbb{C}$-subgroup ${\bf G} < \mathsf{SL}_n$, and we will write $\mathsf{G}^\sigma := {\bf G}^\sigma(\mathbb{C})$. Observe that $\mathsf{G}^{\sigma}$ is a $\mathbb{C}$-algebraic subgroup and $\Gamma^{\sigma}\cap \mathsf{G}^{\sigma}=(\Gamma\cap \mathsf{G})^{\sigma}$ is by assumption precompact in $\mathsf{GL}_n(\mathbb{C})$.
Let $j: \mathsf{GL}_{n}(\mathbb{C})\rightarrow \mathsf{GL}_{2n}(\mathbb{R})$ be the standard embedding given by restriction of scalars, i.e., given by $g \mapsto \begin{pmatrix}\textup{Re}(g) & -\textup{Im}(g)\\ \textup{Im}(g) & \textup{Re}(g)\end{pmatrix}$ for $g\in \mathsf{GL}_n(\mathbb{C})$. Observe moreover that, since $j(\mathsf{G}^\sigma)$ is an $\mathbb{R}$-algebraic subgroup of $\mathsf{GL}_{2n}(\mathbb{R})$, we have that $j(\Lambda^{\sigma})$ is of finite index in $j(\Gamma^{\sigma})\cap \mathsf{L}$, where $\mathsf{L}$ denotes the (Zariski-)closure of $j(\Gamma^{\sigma})\cap j(\mathsf{G}^{\sigma})$ in $\mathsf{GL}_{2n}(\mathbb{R})$. Since $\mathsf{H}$ is compact and $\Lambda^{\sigma}$ is separable in $\Gamma^{\sigma}$, we conclude from Corollary \ref{compact-2} that the double $\Gamma^{\sigma}\ast_{\Lambda^{\sigma}}\Gamma^{\sigma}$ is linear over $\mathbb{R}$. The latter double is isomorphic to the double $\Gamma\ast_{\Lambda}\Gamma$ and the corollary follows.\end{proof}

The following lemmas will be used to establish Corollary 
\ref{retract}, which will in turn be used to prove Theorem~\ref{specialoverQC}. These arguments can all be found in \cite{MR3663601, 2024arXiv240317964K}, but we include them here for the convenience of the reader.

\begin{lemma}\label{cosets}
Let $\Gamma < \mathsf{GL}_d(\mathbb{R})$ and $\Lambda < \Gamma$. Let $\Gamma' < \Gamma$ be of finite index, and set $\Lambda' := \Gamma' \cap \Lambda$. If $\Gamma' \cap \overline{\Lambda'}^\mathrm{Zar} = \Lambda'$, then $\Gamma \cap \overline{\Lambda}^\mathrm{Zar} = \Lambda$. 
\end{lemma}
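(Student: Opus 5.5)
The plan is to exploit the fact that $\overline{\Lambda'}^{\mathrm{Zar}}$ has finite index in $\overline{\Lambda}^{\mathrm{Zar}}$, and then decompose $\Gamma \cap \overline{\Lambda}^{\mathrm{Zar}}$ along cosets of $\Gamma'$ in $\Gamma$.

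\emph{Step 1: the Zariski closures.} Since $\Lambda'$ has finite index in $\Lambda$, choose coset representatives $\lambda_1, \ldots, \lambda_m \in \Lambda$ with $\Lambda = \bigcup_i \lambda_i \Lambda'$. Each $\lambda_i \overline{\Lambda'}^{\mathrm{Zar}}$ is Zariski-closed, so their finite union is a Zariski-closed set containing $\Lambda$. Hence
\[
\overline{\Lambda}^{\mathrm{Zar}} = \bigcup_{i=1}^m \lambda_i \overline{\Lambda'}^{\mathrm{Zar}},
\]
the reverse inclusion being clear.

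\emph{Step 2: reduction to one coset.} Let $g \in \Gamma \cap \overline{\Lambda}^{\mathrm{Zar}}$. By Step 1, $g = \lambda_i h$ for some $i$ and some $h \in \overline{\Lambda'}^{\mathrm{Zar}}$. Then $h = \lambda_i^{-1} g \in \Gamma$, so $h \in \Gamma \cap \overline{\Lambda'}^{\mathrm{Zar}}$. The goal is to show $h \in \Lambda'$, since then $g \in \lambda_i \Lambda' \subset \Lambda$, giving $\Gamma \cap \overline{\Lambda}^{\mathrm{Zar}} \subset \Lambda$; the other inclusion is trivial. The hypothesis, however, only controls $\Gamma' \cap \overline{\Lambda'}^{\mathrm{Zar}}$, so we must show $h \in \Gamma'$.

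\emph{Step 3: the main obstacle.} The issue is to show $\Gamma \cap \overline{\Lambda'}^{\mathrm{Zar}} \subset \Gamma'$, i.e.\ that the hypothesis at level $\Gamma'$ forces $\Gamma \cap \overline{\Lambda'}^{\mathrm{Zar}} = \Lambda'$. I would first replace $\Gamma'$ by its normal core $N$ in $\Gamma$, which has finite index. This changes $\Lambda'$ to $\Lambda'' := N \cap \Lambda$, so one has to check that the hypothesis passes down to $N$. I expect this to follow by rerunning the coset argument of Steps 1 and 2 inside $\Gamma'$, but it should be verified rather than assumed. With $N$ normal, set $H := \overline{\Lambda''}^{\mathrm{Zar}}$ and let $h \in \Gamma \cap H$. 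The group $\Gamma \cap H$ normalizes $N \cap H = \Lambda''$, and the image of $\Gamma \cap H$ in the finite group $\Gamma/N$ is finite. Hence $\Lambda''$ has finite index in $\Gamma \cap H$, so $h^k \in \Lambda''$ for some $k \geq 1$. To conclude that $h$ itself lies in $N$, I would try two routes. The first is to note that $\Gamma \cap H$ is a subgroup of $H$ containing $\Lambda''$ with finite index, and $\Lambda''$ is Zariski-dense in $H$. The second is to use the identity component $H^\circ$: the set $(\Gamma \cap H) \cap H^\circ$ still contains $\Lambda'' \cap H^\circ$ with finite index, and one would use Zariski density to force $(\Gamma\cap H)N = N$. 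I expect this last point to be the genuinely delicate step, and, as the paper notes, it mirrors arguments in the cited works. If it resists, the fallback is to read the lemma in the form actually needed for Corollary~\ref{retract}, where one typically has $\Gamma \cap \overline{\Lambda'}^{\mathrm{Zar}} \subset \Gamma'$ available, and then Steps 1 and 2 finish the proof.
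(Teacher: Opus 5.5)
Your Steps 1 and 2 are exactly the paper's argument. You write $\overline{\Lambda}^{\mathrm{Zar}}=\bigcup_i\lambda_i\overline{\Lambda'}^{\mathrm{Zar}}$ and reduce to showing that $h=\lambda_i^{-1}g\in\Gamma\cap\overline{\Lambda'}^{\mathrm{Zar}}$ lies in $\Lambda'$. At that point the paper simply asserts $\Gamma\cap\overline{\Lambda'}^{\mathrm{Zar}}\subset\Gamma'\cap\overline{\Lambda'}^{\mathrm{Zar}}$, and you were right to single this out as the crux.

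However, your Step 3 cannot be carried out, because this inclusion is false in general. In fact the lemma as stated is false. Take $d=1$, $\Gamma=\{2^k\}_{k\in\mathbb{Z}}<\mathsf{GL}_1(\mathbb{R})$, and $\Gamma'=\Lambda=\{4^k\}_{k\in\mathbb{Z}}$, which is normal of index $2$. Then $\Lambda'=\Lambda$ is infinite, so $\overline{\Lambda'}^{\mathrm{Zar}}=\mathsf{GL}_1(\mathbb{R})$. The hypothesis $\Gamma'\cap\overline{\Lambda'}^{\mathrm{Zar}}=\Gamma'=\Lambda'$ holds, while $\Gamma\cap\overline{\Lambda}^{\mathrm{Zar}}=\Gamma\neq\Lambda$. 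The same happens with $\Gamma=\langle u\rangle$ and $\Gamma'=\Lambda=\langle u^2\rangle$ for a nontrivial unipotent $u\in\mathsf{SL}_2(\mathbb{R})$.

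This example also defeats both of your proposed routes. Your reduction to the normal core is harmless, since the hypothesis passes to any subgroup of $\Gamma'$. But here $N=\Lambda''=\Gamma'$ and $H=\mathsf{GL}_1(\mathbb{R})$, and $h=2\in\Gamma\cap H$ has the following properties:
\begin{itemize}
\item $h^2\in\Lambda''$, and $\Lambda''$ is Zariski-dense in $H$;
\item $h$ lies in the identity component of $H$, in either the Zariski or the Euclidean sense;
\item yet $h\notin N$.
\end{itemize}
Finite index plus Zariski density simply cannot force $(\Gamma\cap H)N=N$.

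So what is missing is not an idea but a hypothesis, and your fallback is the right repair. If one also assumes $\Gamma\cap\overline{\Lambda'}^{\mathrm{Zar}}\subset\Gamma'$, then your Steps 1 and 2 give a complete proof, and this is precisely what the paper's argument actually establishes.

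In the paper's one use of this lemma (Lemma~\ref{retractinduced}), the extra condition does hold for the images. Since $\Gamma'$ is normal, $\mathrm{ind}_{\Gamma'}(\Gamma')$, and hence $\overline{\mathrm{ind}_{\Gamma'}(\Lambda')}^{\mathrm{Zar}}$, lies in the Zariski-closed subgroup of matrices that are block-diagonal with respect to the decomposition of the induced space into the translates $g_iV$. By contrast, each $\gamma\in\Gamma\smallsetminus\Gamma'$ permutes these blocks without fixed points, because $g_i^{-1}\gamma g_i\notin\Gamma'$. So $\mathrm{ind}_{\Gamma'}(\gamma)$ is not block-diagonal. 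You should therefore state the lemma with this additional hypothesis, or prove the block-diagonal version directly, rather than try to derive it from the hypothesis at the level of $\Gamma'$.
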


\begin{proof}
Let $\lambda_1, \ldots, \lambda_m \in \Lambda$ be a set of left coset representatives for $\Lambda'$ in $\Lambda$. Then $\overline{\Lambda}^\mathrm{Zar} = \lambda_1 \overline{\Lambda'}^\mathrm{Zar} \cup \cdots \cup \lambda_m \overline{\Lambda'}^\mathrm{Zar}$. Thus, given $\gamma \in \Gamma \cap \overline{\Lambda}^\mathrm{Zar}$, we have that $\gamma \in \lambda \overline{\Lambda'}^\mathrm{Zar}$ for some $\lambda \in \Lambda$. But then $\lambda^{-1} \gamma \in \Gamma \cap \overline{\Lambda'}^\mathrm{Zar} \subset \Gamma' \cap \overline{\Lambda'}^\mathrm{Zar}$, and so $\lambda^{-1}\gamma \in \Lambda'$. We conclude that $\gamma \in \lambda \Lambda' \subset \Lambda$. 
\end{proof}

\begin{lemma}\label{retractinduced}
Let $\Gamma'$ be a finite-index normal subgroup of a group $\Gamma$, and $\rho': \Gamma' \rightarrow \mathsf{GL}_n(\mathbb{R})$ be a representation. Suppose $\Lambda' < \Gamma'$ is a subgroup satisfying $\rho'(\Gamma') \cap \overline{\rho'(\Lambda')}^\mathrm{Zar} = \rho'(\Lambda')$. Then for any subgroup $\Lambda < \Gamma$ such that $\Gamma' \cap \Lambda = \Lambda'$, the representation $\mathrm{ind}_{\Gamma'}$ of $\Gamma$ induced by $\rho'$ satisfies $\mathrm{ind}_{\Gamma'}(\Gamma) \cap \overline{\mathrm{ind}_{\Gamma'}(\Lambda)}^\mathrm{Zar} = \mathrm{ind}_{\Gamma'}(\Lambda)$.
\end{lemma}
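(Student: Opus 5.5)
Write $\Phi:=\mathrm{ind}_{\Gamma'}$, and fix coset representatives $g_1=1,g_2,\ldots,g_m$ for $\Gamma/\Gamma'$. Because $\Gamma'$ is normal, the restriction $\Phi|_{\Gamma'}$ is, in a suitable basis, the block-diagonal representation
\[
\Phi(\gamma)=\mathrm{diag}\big(\rho'(\gamma),\rho'(g_2^{-1}\gamma g_2),\ldots,\rho'(g_m^{-1}\gamma g_m)\big),\qquad \gamma\in\Gamma'.
\]
Moreover $\ker\Phi=\bigcap_i g_i(\ker\rho')g_i^{-1}\subset\Gamma'$. The plan is to reduce to $\Gamma'$ via Lemma~\ref{cosets}, and then handle $\Gamma'$ block by block.

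\emph{Step 1 (reduction).} I apply Lemma~\ref{cosets} to the group $\Phi(\Gamma)$, its subgroup $\Phi(\Lambda)$, and the finite-index subgroup $\Phi(\Gamma')$. For this I need $\Phi(\Gamma')\cap\Phi(\Lambda)=\Phi(\Lambda')$. Suppose $\Phi(\lambda)=\Phi(\gamma')$ with $\lambda\in\Lambda$ and $\gamma'\in\Gamma'$. Then $\lambda\gamma'^{-1}\in\ker\Phi\subset\Gamma'$, so $\lambda\in\Gamma'\cap\Lambda=\Lambda'$. It therefore suffices to prove
\[
\Phi(\Gamma')\cap\overline{\Phi(\Lambda')}^{\mathrm{Zar}}=\Phi(\Lambda').
\]

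\emph{Step 2 (first block).} Let $\gamma\in\Gamma'$ with $\Phi(\gamma)\in Z:=\overline{\Phi(\Lambda')}^{\mathrm{Zar}}$. Projection onto the first diagonal block is a morphism of algebraic groups, so it carries $Z$ into $\overline{\rho'(\Lambda')}^{\mathrm{Zar}}$. Hence $\rho'(\gamma)\in\rho'(\Gamma')\cap\overline{\rho'(\Lambda')}^{\mathrm{Zar}}=\rho'(\Lambda')$ by hypothesis. So $\rho'(\gamma)=\rho'(\lambda)$ for some $\lambda\in\Lambda'$. Replacing $\gamma$ by $k:=\lambda^{-1}\gamma$ (note $\Phi(k)\in Z$, since $Z$ is a group), we may assume $k\in\ker\rho'\cap\Gamma'$. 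It then remains to show $\Phi(k)\in\Phi(\Lambda')$.

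\emph{Step 3 (remaining blocks; main obstacle).} Now $\Phi(k)=\mathrm{diag}\big(1,\rho'(g_2^{-1}kg_2),\ldots\big)$ lies in $Z$, and we need it in $\Phi(\Lambda')$. My first attempt is an induction over the blocks. Let $Z_1\subset Z$ be the algebraic subgroup where the first block is trivial. The aim is to relate $Z_1$ to the Zariski closure of $\Phi(\Lambda'\cap\ker\rho')$. Then, projecting to the $i$-th block and using the hypothesis transported by the automorphism $c_{g_i}$ of $\Gamma'$, I would peel off one block at a time. In detail, the $i$-th block of $\Phi(k)$ lies in $\rho'(g_i^{-1}\Gamma' g_i)=\rho'(\Gamma')$, so one wants to show it lies in $\rho'(g_i^{-1}\Lambda'g_i)$. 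Then multiply by a further element of $\Lambda'$ killing that block while keeping the earlier blocks trivial.

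The step I expect to be genuinely delicate is exactly this compatibility. The hypothesis is stated only for $\Lambda'$ itself under $\rho'$, whereas the $i$-th block sees the conjugate $g_i^{-1}\Lambda'g_i$. Here I would use that $\Lambda$ meets $\Gamma'$ in $\Lambda'$ and that one may choose the $g_i$ inside $\Lambda$ whenever the coset $g_i\Gamma'$ meets $\Lambda$, so that $g_i^{-1}\Lambda'g_i=\Lambda'$ for those cosets. The remaining cosets, those not meeting $\Lambda$, would need a separate argument, for instance via the Zariski-closedness of the subgroup of $Z$ with prescribed trivial blocks.
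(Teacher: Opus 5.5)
Steps 1 and 2 are fine as far as they go (with one caveat on Step 1 below). Step 3, however, is only a sketch, and it cannot be completed: if $\rho'$ is not assumed faithful, the statement as written is false.

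\emph{Counterexample.} Let $\Gamma'=\langle a,b\rangle\cong\mathbb{Z}^2$ and $\Gamma=\Gamma'\rtimes\langle\sigma\rangle$, with $\sigma$ an involution swapping $a$ and $b$. Let $\rho'\colon\Gamma'\to\mathsf{GL}_1(\mathbb{R})$ be given by $\rho'(a)=1$, $\rho'(b)=2$, and let $\Lambda=\Lambda'=\langle a^2\rangle$. Since $\rho'(\Lambda')=\{1\}$, the hypothesis holds trivially. But $\Phi(a^ib^j)=\mathrm{diag}(2^j,2^i)$, so $\Phi(\Lambda)=\{\mathrm{diag}(1,4^m)\}$ has Zariski closure $\{\mathrm{diag}(1,t):t\neq 0\}$. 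This closure contains $\Phi(a)=\mathrm{diag}(1,2)$, which is not in $\Phi(\Lambda)$.

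This is exactly your Step 3 with $k=a$. The second block of $\Phi(k)$ is $\rho'(b)=2\notin\rho'(\sigma^{-1}\Lambda'\sigma)=\{4^m\}$. Moreover, $\sigma\Gamma'$ is a coset not meeting $\Lambda$, which is precisely the case you deferred to a ``separate argument''. No such argument exists: the hypothesis constrains only the first block, and on $\ker\rho'$ the conjugate blocks are unconstrained.

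\emph{The fix.} The missing hypothesis is that $\rho'$ be faithful, or more generally that $\ker\rho'$ be normal in $\Gamma$, so that $\ker\rho'=\ker\Phi$. This holds in every application in the paper (Lemma~\ref{retractsarelagebraic}, Corollary~\ref{kleinian}). With it, your Step 2 already finishes the proof: $k\in\ker\rho'$ forces $\Phi(k)=1\in\Phi(\Lambda')$, and no block-by-block induction is needed.

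\emph{Comparison with the paper.} Your Step 1 is the paper's reduction. In place of your Steps 2--3, the paper asserts that $\mathrm{ind}_{\Gamma'}|_{\Gamma'}$ extends to an algebraic representation, i.e.\ equals $\Psi\circ\rho'$ with $\Psi$ algebraic. That would make Step 3 immediate for the same reason, but the example shows it can fail: there $\Phi|_{\Gamma'}$ does not even factor through $\rho'$. So your first-block projection, combined with faithfulness, is the argument that actually works.

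\emph{Caveat on Step 1.} The check $\Phi(\Gamma')\cap\Phi(\Lambda)=\Phi(\Lambda')$ is not enough. The proof of Lemma~\ref{cosets} uses the inclusion $\Gamma\cap\overline{\Lambda'}^{\mathrm{Zar}}\subseteq\Gamma'$, which fails in general: for $\Gamma=2^{\mathbb{Z}}$ and $\Gamma'=\Lambda=4^{\mathbb{Z}}$ in $\mathsf{GL}_1(\mathbb{R})$, the hypothesis of Lemma~\ref{cosets} holds but its conclusion fails. In your setting the needed inclusion $\Phi(\Gamma)\cap\overline{\Phi(\Lambda')}^{\mathrm{Zar}}\subseteq\Phi(\Gamma')$ does hold. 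Indeed, $\overline{\Phi(\Lambda')}^{\mathrm{Zar}}$ lies in the Zariski-closed block-diagonal subgroup. That subgroup meets $\Phi(\Gamma)$ exactly in $\Phi(\Gamma')$, because every element outside $\Gamma'$ carries the first block to a different block. State this explicitly, and then run the coset decomposition $\overline{\Phi(\Lambda)}^{\mathrm{Zar}}=\bigcup_j\Phi(\lambda_j)\overline{\Phi(\Lambda')}^{\mathrm{Zar}}$ directly, with $\lambda_j$ representatives of $\Lambda/\Lambda'$.
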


\begin{proof}
By Lemma \ref{cosets}, it suffices to show that $\mathrm{ind}_{\Gamma'}(\Gamma') \cap \overline{\mathrm{ind}_{\Gamma'}(\Lambda')}^\mathrm{Zar} = \mathrm{ind}_{\Gamma'}(\Lambda')$, but the latter is true since the restriction $\mathrm{ind}_{\Gamma'} \bigr|_{\Gamma'}$ extends to an ($\mathbb{R}$-algebraic) representation of $\mathsf{SL}_n(\mathbb{R})$.
\end{proof}

One says a subgroup $\Lambda$ of a group $\Gamma$ is a {\em virtual retract} of $\Gamma$ if there is a finite-index subgroup $\Gamma' < \Gamma$ containing $\Lambda$ such that $\Lambda$ is a retract of $\Gamma'$. We will say a subgroup $\Lambda < \Gamma$ is {\em virtually a virtual retract} of $\Gamma$ if there is a finite-index subgroup $\Gamma'$ of $\Gamma$ such that $\Lambda \cap \Gamma'$ is a retract of $\Gamma'$.

\begin{lemma}\label{retractsarelagebraic}
Let $\Gamma < \mathsf{SL}_n(\mathbb{R})$ be a subgroup, and suppose $\Lambda < \Gamma$ is virtually a virtual retract of~$\Gamma$. Then there is a faithful representation $\rho: \Gamma \rightarrow \mathsf{SL}_d(\mathbb{R})$ for some $d \in \mathbb{N}$ such that $\rho(\Gamma) \cap \overline{\rho(\Lambda)}^\mathrm{Zar} = \rho(\Lambda)$. Moreover, if $\Lambda$ is precompact in $\mathsf{SL}_n(\mathbb{R})$, then $\rho$ can be chosen such that $\rho(\Lambda)$ is precompact in $\mathsf{SL}_d(\mathbb{R})$. 
\end{lemma}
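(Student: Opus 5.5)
Here is the plan. Choose a finite-index subgroup $\Gamma'<\Gamma$ with $\Lambda':=\Lambda\cap\Gamma'$ a retract of $\Gamma'$, and let $r:\Gamma'\to\Lambda'$ be a retraction. We may shrink $\Gamma'$ to a finite-index normal subgroup $\Gamma''$ of $\Gamma$ contained in $\Gamma'$. Then $r$ restricts to a retraction of $\Gamma''$ onto $r(\Gamma'')\cap\ldots$, which is not obviously $\Lambda\cap\Gamma''$. So instead we keep $\Gamma'$, build the representation on $\Gamma'$, and handle normality afterwards. Concretely, we define $\rho':\Gamma'\to\mathsf{SL}_{2n}(\mathbb{R})$ by
\[
\rho'(\gamma)=\begin{pmatrix}\gamma & \\ & r(\gamma)\end{pmatrix},
\]
which is faithful because the first block is the inclusion. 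Since $r|_{\Lambda'}=\mathrm{id}$, we get $\rho'(\Lambda')\subset\mathsf{D}:=\{(g,g)\}$, the diagonal copy of $\mathsf{SL}_n(\mathbb{R})$, which is an algebraic subgroup. Conversely, if $\rho'(\gamma)\in\mathsf{D}$, then $\gamma=r(\gamma)\in\Lambda'$. Hence $\rho'(\Gamma')\cap\mathsf{D}=\rho'(\Lambda')$, and since $\overline{\rho'(\Lambda')}^{\mathrm{Zar}}\subset\mathsf{D}$, we obtain $\rho'(\Gamma')\cap\overline{\rho'(\Lambda')}^{\mathrm{Zar}}=\rho'(\Lambda')$. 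If $\Lambda$ is precompact, then $\rho'(\Lambda')=\{(\lambda,\lambda)\}$ is precompact as well.

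Next we pass to $\Gamma$ by induction, which means arranging normality. Let $N\subset\Gamma'$ be a finite-index subgroup normal in $\Gamma$, and set $\Lambda_N:=\Lambda\cap N$. Restricting $\rho'$ to $N$ gives $\rho'(N)\cap\overline{\rho'(\Lambda_N)}^{\mathrm{Zar}}\subset\rho'(N)\cap\mathsf{D}=\rho'(N\cap\Lambda')=\rho'(\Lambda_N)$, and the reverse inclusion is trivial. The hypothesis of Lemma~\ref{retractinduced} therefore holds for the pair $(N,\Lambda_N)$ with representation $\rho'|_N$, which is faithful. Let $\rho:=\mathrm{ind}_N$ be the representation of $\Gamma$ induced from $\rho'|_N$, realized in $\mathsf{GL}_{2n[\Gamma:N]}(\mathbb{R})$. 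Lemma~\ref{retractinduced}, applied with $\Lambda\cap N=\Lambda_N$, then gives $\rho(\Gamma)\cap\overline{\rho(\Lambda)}^{\mathrm{Zar}}=\rho(\Lambda)$.

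It remains to check faithfulness and the landing in $\mathsf{SL}$. Faithfulness: the restriction of $\rho$ to $N$ contains $\rho'|_N$ as a summand, so the kernel of $\rho$ is a normal subgroup meeting $N$ trivially, hence finite. To kill it, we take the direct sum $\rho\oplus\iota$ with the given inclusion $\iota:\Gamma\to\mathsf{SL}_n(\mathbb{R})$. We must confirm that the Zariski-closure property survives adding this summand. It does: if $\gamma\in\Gamma$ has $(\rho\oplus\iota)(\gamma)$ in the closure of $(\rho\oplus\iota)(\Lambda)$, then projecting gives $\rho(\gamma)\in\overline{\rho(\Lambda)}^{\mathrm{Zar}}$, so $\rho(\gamma)=\rho(\lambda)$ for some $\lambda\in\Lambda$. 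Then $\gamma\lambda^{-1}\in\ker\rho$. This kernel is finite and is not obviously inside $\Lambda$, so this step is the main obstacle. To get around it, we replace $\mathrm{ind}$ by $\mathrm{ind}\oplus\mathrm{Ad}$-type data, or more simply use the coset-trick of Lemma~\ref{cosets}: the representation $\rho\oplus\iota$ restricted to $N$ extends algebraically and detects $\Lambda_N$ via the $\rho'$ summand, so Lemma~\ref{cosets} (with $\Gamma'=N$) yields the property for $\Gamma$ directly.

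Finally, to land in $\mathsf{SL}_d$, we compose with $g\mapsto\mathrm{diag}(g,\det g^{-1})$, which preserves both the intersection property and precompactness. Precompactness of the image of $\Lambda$ holds because the induced representation of a precompact group, built from unitary-type data, stays precompact: $\rho(\Lambda)$ permutes blocks and acts on them by conjugates of $\rho'(\Lambda_N)$, which is precompact, up to a finite group.
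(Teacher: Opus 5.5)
Your proposal is correct and follows the paper's argument: embed $\Gamma'$ via $\gamma\mapsto\mathrm{diag}(\gamma,r(\gamma))$, note that its image meets the diagonal copy of $\mathsf{SL}_n(\mathbb{R})$ exactly in the image of $\Lambda'$, pass to a finite-index subgroup $N$ normal in $\Gamma$, and induce using Lemma~\ref{retractinduced}; on the normality step you are more careful than the paper, correctly observing that only $\rho'|_N$ is needed and not a retraction of $N$. The detour through $\rho\oplus\iota$ and the determinant twist is unnecessary, because the induced representation is already faithful (an element outside $N$ carries the block $V$ to a different block, and on $N$ the summand $\rho'|_N$ is faithful) and already lands in $\mathsf{SL}$ (a block-permutation matrix with blocks in $\mathsf{SL}_{2n}(\mathbb{R})$ has determinant $(\pm1)^{2n}=1$); also, for precompactness the blocks of $\rho(\lambda)$ are not conjugates of elements of $\rho'(\Lambda_N)$ but $\rho'(h)=\mathrm{diag}(h,r(h))$ with $h=g_j^{-1}\lambda g_i\in N$ for coset representatives $g_i,g_j$, and these are bounded because $\Lambda$ is precompact, there are finitely many $g_i$, and $r$ takes values in $\Lambda'\subset\Lambda$.
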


\begin{proof}
By assumption, there is a finite-index subgroup $\Gamma' < \Gamma$ and a map $r: \Gamma' \rightarrow \Lambda' := \Gamma' \cap \Lambda$ such that $r\bigr|_{\Lambda'} = \mathrm{Id}_{\Lambda'}$. Up to replacing $\Gamma'$ with $\bigcap_{\gamma \in \Gamma} \gamma \Gamma' \gamma^{-1}$ (and $\Lambda'$ with $\Lambda \cap \bigcap_{\gamma \in \Gamma} \gamma \Gamma' \gamma^{-1}$), we can assume $\Gamma'$ is normal in $\Gamma$. Now let $\rho': \Gamma' \rightarrow \mathsf{SL}_{2n}(\mathbb{R})$ be the representation given by
\[
\gamma \mapsto \begin{pmatrix}  
\gamma & \\
& r(\gamma)
\end{pmatrix}
\]
for $\gamma \in \Gamma'$. Then $\rho'$ satisfies $\rho'(\Gamma') \cap \overline{\rho'(\Lambda')}^\mathrm{Zar} = \rho'(\Lambda')$, since indeed $\rho'(\Gamma') \cap \mathsf{\Lambda}(\mathsf{SL}_n(\mathbb{R})) = \rho'(\Lambda')$, where $\mathsf{\Lambda} : \mathsf{SL}_n(\mathbb{R}) \rightarrow \mathsf{SL}_{2n}(\mathbb{R})$ is the diagonal embedding $g \mapsto \mathrm{diag}(g,g)$. Hence, by Lemma \ref{retractinduced}, we may take $\rho$ to be the representation $\mathrm{ind}_{\Gamma'}$ of $\Gamma$ induced by $\rho'$. 

Moreover, if $\Lambda$ is contained in a compact subgroup $\mathsf{C}< \mathsf{SL}_n(\mathbb{R})$, then $\rho'(\Lambda') \subset \mathsf{\Lambda}(\mathsf{C})$ is precompact, and hence so is $\rho(\Lambda')$ since $\rho\bigr|_{\Gamma'}$ extends to a representation of $\mathsf{SL}_n(\mathbb{R})$. We conclude in this case that $\rho(\Lambda)$ is also precompact since $\Lambda'$ is of finite index in $\Lambda$.
\end{proof}

The following is now immediate from Theorem \ref{compact-1} and Lemma \ref{retractsarelagebraic}.

\begin{corollary} \label{retract} Let $\Gamma<\mathsf{SL}_n(\mathbb{R})$ be a subgroup.
Suppose that $\Lambda<\Gamma$ is virtually a virtual retract of $\Gamma$, and is precompact in $\mathsf{SL}_n(\mathbb{R})$. Then the double $\Gamma \ast_{\Lambda}\Gamma$ is linear over $\mathbb{R}$.\end{corollary}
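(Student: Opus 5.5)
Corollary~\ref{retract} should follow by combining Lemma~\ref{retractsarelagebraic} with the compact-subgroup machinery (Theorem~\ref{compact-1}, or more conveniently Corollary~\ref{compactcorollaries}(\ref{compact-2})). The plan has three steps.

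\emph{Step 1.} Apply Lemma~\ref{retractsarelagebraic} to $\Lambda<\Gamma<\mathsf{SL}_n(\mathbb{R})$. Since $\Lambda$ is virtually a virtual retract of $\Gamma$ and is precompact, this gives a faithful representation $\rho:\Gamma\to\mathsf{SL}_d(\mathbb{R})$ with
\[
\rho(\Gamma)\cap\overline{\rho(\Lambda)}^{\mathrm{Zar}}=\rho(\Lambda),
\]
and with $\rho(\Lambda)$ precompact. Replacing $\Gamma$ by $\rho(\Gamma)$, we may assume $\Gamma\cap\mathsf{C}=\Lambda$ and $\Lambda$ is precompact, where $\mathsf{C}:=\overline{\Lambda}^{\mathrm{Zar}}$.

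\emph{Step 2.} Show that $\mathsf{C}$ is compact. The Zariski closure of a relatively compact subgroup $\Lambda$ of $\mathsf{SL}_d(\mathbb{R})$ equals its Euclidean closure. Indeed, $\Lambda$ lies in some maximal compact subgroup, which is conjugate to $\mathsf{SO}(d)$; compact subgroups of $\mathsf{SL}_d(\mathbb{R})$ are real algebraic; and the closure $\overline{\Lambda}$ is a compact group, hence Zariski closed, and it contains $\Lambda$ as a Zariski-dense subset. So $\mathsf{C}=\overline{\Lambda}$ is compact.

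\emph{Step 3.} Conclude linearity. Set $\mathsf{L}:=\mathsf{C}$. Then $\Lambda=\Gamma\cap\mathsf{L}$, so $\Lambda$ trivially has finite index in $\Gamma\cap\mathsf{L}$. To apply Corollary~\ref{compactcorollaries}(\ref{compact-2}) we also need $\Lambda$ to be separable in $\Gamma$. Here is the one real obstacle: for non-finitely generated $\Gamma$, separability of $\Gamma\cap\mathsf{C}$ is not automatic, and the paper's cited separability result requires finite generation.

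The cleaner route is to bypass Corollary~\ref{compactcorollaries} and run the proof of Theorem~\ref{compact-1} directly, since that theorem never uses separability. Take $K$ to be a subfield of $\mathbb{R}$ containing the entries of $\rho(\Gamma)$ and the coefficients of defining polynomials of $\mathsf{C}$. (Over $\mathbb{R}$ itself this is automatic, taking $K=\mathbb{R}$.) Then $\Lambda=\Gamma\cap(\mathsf{SL}_d(K)\cap\mathsf{C})$. The amalgam $\Gamma\ast_\Lambda\Gamma$ embeds in $\mathsf{SL}_d(K)\ast_{\Lambda_0}\mathsf{SL}_d(K)$ with $\Lambda_0:=\mathsf{SL}_d(K)\cap\mathsf{C}$. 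This is a standard normal-form fact: if $A\le B$ with $A\cap C_0=C$, then $A\ast_C A\hookrightarrow B\ast_{C_0}B$.

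Theorem~\ref{compact-1} then gives a faithful representation of the latter amalgam into $\mathsf{SL}_m(K[s,t])$. Finally, $K[s,t]$ embeds in $\mathbb{C}$, so restriction of scalars yields $\mathbb{R}$-linearity, exactly as at the end of the proof of Corollary~\ref{compactcorollaries}(\ref{compact-2}). The main thing to verify carefully is the amalgam embedding, which reduces to checking that reduced words in $\Gamma\ast_\Lambda\Gamma$ remain reduced in the larger amalgam. This holds precisely because $\Gamma\cap\Lambda_0=\Lambda$.
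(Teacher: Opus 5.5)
Your proposal is correct and follows the paper's route. The paper deduces Corollary~\ref{retract} directly from Lemma~\ref{retractsarelagebraic} and Theorem~\ref{compact-1}, not via Corollary~\ref{compactcorollaries}, whose separability hypothesis is indeed not available here; your argument just makes explicit what the paper leaves implicit, namely the compactness of $\overline{\rho(\Lambda)}^{\mathrm{Zar}}$ (which already follows because it lies in a Zariski-closed conjugate of $\mathsf{SO}(d)$), the normal-form embedding $\Gamma\ast_\Lambda\Gamma\hookrightarrow\mathsf{SL}_d(K)\ast_{\Lambda_0}\mathsf{SL}_d(K)$, and the passage from $K[s,t]$-linearity to $\mathbb{R}$-linearity.
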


We are now ready to prove the following more general fact than Theorem~\ref{specialoverQC} (see \cite{MR2684983} for details on the notion of a relatively quasiconvex subgroup of a relatively hyperbolic group). Indeed, Theorem~\ref{specialoverQC} is the particular case of Theorem~\ref{virtuallyspecialrelativelyhyperbolic} where the peripheral subgroups of $\Gamma$ are trivial.

\begin{theorem}\label{virtuallyspecialrelativelyhyperbolic}
Let $\Gamma$ be a virtually compact special group that is hyperbolic relative to a collection of finitely generated virtually abelian subgroups, and $\Lambda < \Gamma$ be a relatively quasiconvex subgroup. Then the double $\Gamma \ast_\Lambda \Gamma$ is linear over $\mathbb{R}$. 
\end{theorem}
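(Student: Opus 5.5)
The plan is to reduce to Corollary~\ref{retract}. We need a faithful representation of (a finite-index subgroup of) $\Gamma$ into $\mathsf{SL}_n(\mathbb{R})$ whose image is precompact, together with the fact that $\Lambda$ is virtually a virtual retract of $\Gamma$.

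First, we produce a faithful representation of $\Gamma$ into a compact Lie group. Since $\Gamma$ is virtually compact special, it has a finite-index subgroup $\Gamma_0$ that embeds in a finitely generated right-angled Coxeter group $W$. By Agol \cite{agol2018hyperbolic}, $W$ embeds in a compact Lie group, hence in some $\mathsf{SO}(N)$. Inducing from $\Gamma_0$ to $\Gamma$ gives a faithful representation $\rho:\Gamma\to \mathsf{SO}(N[\Gamma:\Gamma_0])\subset \mathsf{SL}_n(\mathbb{R})$. Its image is precompact because induction of a unitary (orthogonal) representation from a finite-index subgroup remains orthogonal. In particular, $\rho(\Lambda)$ is precompact.

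Second, we show that $\Lambda$ is virtually a virtual retract of $\Gamma$. Here I would invoke the known results on virtually special relatively hyperbolic groups. When $\Gamma$ is hyperbolic and virtually compact special, quasiconvex subgroups are virtual retracts by Haglund--Wise \cite{HW08}: a quasiconvex subgroup of a compact special group yields a local isometry of cube complexes, and canonical completion and retraction gives a retraction from a finite cover. For the relatively hyperbolic case with virtually abelian peripheral subgroups, the corresponding statement is that relatively quasiconvex subgroups of virtually compact special groups are virtual retracts. One source is Sageev--Wise / Groves--Manning relative cubulation, another is \cite{2024arXiv240317964K}, or Minasyan's results on virtual retractions. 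Combined with the fact that finitely generated virtually abelian groups have all subgroups virtual retracts, this yields a finite-index $\Gamma'<\Gamma$ such that $\Lambda\cap\Gamma'$ is a retract of $\Gamma'$.

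Third, we apply Corollary~\ref{retract} to $\rho(\Gamma)<\mathsf{SL}_n(\mathbb{R})$ and $\rho(\Lambda)$. This shows that $\rho(\Gamma)\ast_{\rho(\Lambda)}\rho(\Gamma)\cong \Gamma\ast_\Lambda\Gamma$ is linear over $\mathbb{R}$.

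The main obstacle is the second step: the relatively hyperbolic virtual-retract statement, in particular when $\Lambda$ is not full relative to the peripherals, meaning it meets parabolics in infinite-index subgroups. If a clean citation is unavailable, the fallback is to embed $\Lambda$ into a fully relatively quasiconvex subgroup $\Lambda^+$ of $\Gamma$ (via Dehn filling or the Hruska--Wise / Manning--Martinez-Pedroza constructions) that retracts virtually onto $\Lambda$ through the virtually abelian parabolic pieces. Then $\Lambda^+$ is a virtual retract of $\Gamma$ by the cubical canonical-completion argument, and composing the two retractions gives the required one. The finite generation of $\Lambda$, which holds since relatively quasiconvex subgroups of such $\Gamma$ are finitely generated, should be used here.
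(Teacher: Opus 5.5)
Your argument follows the paper's proof. Embed $\Gamma$ in a compact Lie group via Agol, show that $\Lambda$ is virtually a virtual retract, and apply Corollary~\ref{retract}. Your induction from $\Gamma_0$ is how the embedding follows for virtually special groups. The induced representation lands in an orthogonal group, and appending the determinant puts it in $\mathsf{SL}_n(\mathbb{R})$ as Corollary~\ref{retract} requires.

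The step you flag as the main obstacle is where your write-up is not yet justified. The paper closes it with a citation, after a reduction you do not make. Since $\Gamma$ is residually finite and its peripheral subgroups are finitely generated virtually abelian, one passes to a finite-index subgroup $\Gamma'$ that is compact special and meets each peripheral subgroup in an abelian group. Then $\Gamma'$ is hyperbolic relative to finitely generated abelian subgroups, and $\Lambda\cap\Gamma'$ is relatively quasiconvex in $\Gamma'$. By \cite[Thm.~7.3]{HW08} (trivial peripherals) and \cite[Thm.~1.3]{MR2980525} (general case), $\Lambda\cap\Gamma'$ is a retract of some finite-index $\Gamma''<\Gamma'$. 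Since $\Lambda\cap\Gamma''=\Lambda\cap\Gamma'$, this is exactly the statement that $\Lambda$ is virtually a virtual retract of $\Gamma$.

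The sources you list do not supply this. Minasyan's results \cite{MR4307692} give virtual retractions onto virtually abelian subgroups; the paper uses them for Corollary~\ref{VSoverVA}. \cite{2024arXiv240317964K} is a source for the Zariski-closure lemmas behind Corollary~\ref{retract}, not for relatively quasiconvex subgroups. Likewise, the fact that subgroups of virtually abelian groups are virtual retracts does not bridge the gap between virtually abelian and abelian peripherals; the passage to $\Gamma'$ does. With the Chesebro--DeBlois--Wilton theorem in hand, your fallback via a full enlargement $\Lambda^+$ is unnecessary.
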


\begin{proof}
By work of Agol \cite{agol2018hyperbolic}, there is an embedding of $\Gamma$ in a compact Lie group. Theorem~\ref{specialoverQC} now follows from Corollary \ref{retract}, together with the fact that a residually finite group that is hyperbolic relative to a collection $\mathscr{P}$ of finitely generated virtually abelian subgroups possesses a finite-index subgroup whose intersection with each member of $\mathscr{P}$ is abelian, and the fact that relatively quasiconvex subgroups of compact special groups that are hyperbolic relative to a collection of finitely generated abelian subgroups are virtual retracts; see \cite[Thm.~7.3]{HW08} and \cite[Thm.~1.3]{MR2980525}.
\end{proof}

A similar argument where one instead uses that virtually abelian subgroups of arbitrary virtually special groups are virtual retracts \cite{MR4307692}, yields the following.

\begin{corollary}\label{VSoverVA}
Let $\Gamma$ be a finitely generated virtually special group and $\Lambda < \Gamma$ a virtually abelian subgroup. Then the double $\Gamma \ast_{\Lambda} \Gamma$ is linear over $\mathbb{R}$.
\end{corollary}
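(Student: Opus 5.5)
The plan is to mirror the proof of Theorem~\ref{virtuallyspecialrelativelyhyperbolic}: reduce to Corollary~\ref{retract}, which needs two things. First, $\Gamma$ must embed in $\mathsf{SL}_n(\mathbb{R})$ with $\Lambda$ precompact. Second, $\Lambda$ must be virtually a virtual retract of $\Gamma$.

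For the first requirement, I would invoke Agol's theorem \cite{agol2018hyperbolic}. Since $\Gamma$ is finitely generated and virtually special, it has a finite-index special subgroup $\Gamma_0$. This $\Gamma_0$ embeds in a finitely generated right-angled Coxeter group, and hence in a compact Lie group. Inducing this representation from $\Gamma_0$ up to $\Gamma$ gives a faithful representation of $\Gamma$ into a compact subgroup of some $\mathsf{GL}_N(\mathbb{R})$. Inducing preserves faithfulness because the restriction to $\Gamma_0$ contains the original representation. It preserves compactness because an induced unitary representation is unitary. After composing with $g\mapsto \mathrm{diag}(g,\det g^{-1})$, the image lies in $\mathsf{SL}_{N+1}(\mathbb{R})$ and is still precompact. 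Every subgroup of $\Gamma$, and in particular $\Lambda$, is then precompact.

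For the second requirement, I would use the result of \cite{MR4307692} that virtually abelian subgroups of virtually special groups are virtual retracts. The care needed is over which ambient group this applies to; I expect this to be the main obstacle, though it is not deep. If the cited result is stated only for special groups, I would pass to $\Gamma_0$. There $\Lambda\cap\Gamma_0$ is virtually abelian, so it is a virtual retract of $\Gamma_0$. Thus there is a finite-index subgroup $\Gamma'<\Gamma_0$ containing $\Lambda\cap\Gamma_0$ and retracting onto it. Then $\Gamma'$ has finite index in $\Gamma$, and $\Lambda\cap\Gamma'=\Lambda\cap\Gamma_0$ because $\Gamma'$ contains $\Lambda\cap\Gamma_0$. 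So $\Lambda\cap\Gamma'$ is a retract of $\Gamma'$, which is exactly the definition of $\Lambda$ being virtually a virtual retract of $\Gamma$. Finite generation of $\Lambda$ is automatic, since virtually abelian subgroups of finitely generated virtually special groups are finitely generated (they sit in a RACG, whose abelian subgroups are finitely generated). If needed, I would also pass to a finite-index abelian subgroup of $\Lambda\cap\Gamma_0$ and argue the same way.

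Applying Corollary~\ref{retract} to $\Gamma<\mathsf{SL}_{N+1}(\mathbb{R})$ and $\Lambda$ then shows that $\Gamma\ast_\Lambda\Gamma$ is linear over $\mathbb{R}$. This needs no relative hyperbolicity and no appeal to \cite{HW08}.
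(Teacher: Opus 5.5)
Your proposal is correct and is essentially the paper's argument: embed $\Gamma$ in a compact Lie group via Agol (induced up from a finite-index special subgroup), use Minasyan's theorem \cite{MR4307692} that virtually abelian subgroups of virtually special groups are virtual retracts, and conclude with Corollary~\ref{retract}. One justification needs tightening: faithfulness of the induced representation also uses that any $\gamma\notin\Gamma_0$ carries the summand indexed by the trivial coset to a different summand, since the restriction to $\Gamma_0$ containing the original representation only shows that the kernel meets $\Gamma_0$ trivially.
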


Using deep results about Kleinian groups, we also deduce the following.

\begin{corollary}\label{kleinian}
Let $\Gamma$ be a finitely generated discrete subgroup of $\mathrm{Isom}(\mathbb{H}^3)$ and $\Lambda < \Gamma$ an arbitrary finitely generated subgroup. Then the double $\Gamma \ast_\Lambda \Gamma$ is linear.
\end{corollary}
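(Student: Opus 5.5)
We need to prove Corollary~\ref{kleinian}: for $\Gamma$ a finitely generated Kleinian group and $\Lambda<\Gamma$ finitely generated, the double $\Gamma \ast_\Lambda \Gamma$ is linear. The plan is to reduce to the two available linearity mechanisms: Theorem~\ref{virtuallyspecialrelativelyhyperbolic} and Corollary~\ref{VSoverVA} in the geometrically finite/virtually special setting, and a direct argument in the remaining cases.

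\emph{Step 1: reduce to torsion-free, and to $\Gamma$ virtually special.} By Selberg's lemma, $\Gamma$ has a torsion-free finite-index subgroup. By tameness (Agol, Calegari--Gabai) and Scott's core theorem, $\Gamma$ is the fundamental group of a compact $3$-manifold, possibly after passing to an orbifold cover. Finitely generated Kleinian groups are virtually special; this combines Wise, Agol, Przytycki--Wise and Groves--Manning for the finite-volume and geometrically finite cases, and tameness plus Thurston's covering theorem for the rest.

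\emph{Step 2: split on the type of $\Lambda$.} By tameness of $\Lambda$ and the Thurston--Canary covering theorem, $\Lambda$ falls into one of three cases.
\begin{enumerate}
\item[(a)] $\Lambda$ is geometrically finite. Then $\Lambda$ is relatively quasiconvex in $\Gamma$ whenever $\Gamma$ is geometrically finite (Hruska). A finitely generated, geometrically infinite $\Gamma$ still embeds in a geometrically finite Kleinian group, via Thurston/Brooks-type embedding results, in a way that keeps $\Lambda$ relatively quasiconvex.
\item[(b)] $\Lambda$ is a virtual fiber subgroup, so $\Gamma$ is virtually $\Lambda\rtimes\mathbb{Z}$ with $\Lambda$ virtually normal.
\item[(c)] $\Lambda$ is of finite index in $\Gamma$.
\end{enumerate}

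Case (a) is handled by Theorem~\ref{virtuallyspecialrelativelyhyperbolic}. Its peripheral subgroups are the cusp groups, which are virtually abelian. For geometrically infinite $\Gamma$, I would need to replace $\Gamma$ by an ambient geometrically finite group $\Gamma_0$ and use that $\Gamma\ast_\Lambda\Gamma$ embeds in $\Gamma_0\ast_\Lambda\Gamma_0$.

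Case (c) is elementary once $\Lambda$ is normal. I would replace $\Lambda$ by its normal core $N$ and use that $\Gamma\ast_\Lambda\Gamma$ contains a finite-index subgroup built from copies of $N$. Alternatively, note that $\Lambda$ is separable, hence closed in the profinite topology. Since $\Gamma\ast_\Lambda\Gamma$ maps onto $\Gamma$ with kernel a free product of conjugates of $\Gamma$ relative to $\Lambda$, the double is virtually a graph of groups with finite-index edge groups. Such groups are commensurable with $\Lambda\times F$ for a free group $F$ when $\Lambda$ is normal, and are therefore linear.

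Case (b) I would treat similarly. Pass to the finite-index subgroup $\Gamma'=\Lambda\rtimes\mathbb{Z}$. The double $\Gamma'\ast_\Lambda\Gamma'$ is isomorphic to $\Lambda\rtimes F_2$, with both generators acting by the monodromy $\phi$. This group is a quotient of $\Lambda\rtimes\mathbb{Z}$ pulled back along $F_2\to\mathbb{Z}$, and so is a subgroup of $(\Lambda\rtimes\mathbb{Z})\times F_2$. That product is linear because products of linear groups are linear. Passing back from $\Gamma'$ to $\Gamma$ uses that a group virtually containing a linear subgroup is linear via induced representations. The double $\Gamma\ast_\Lambda\Gamma$ contains $\Gamma'\ast_\Lambda\Gamma'$ with finite index, provided $\Lambda<\Gamma'$; this holds up to adjusting $\Lambda$ by finite index, which reduces to case (c)-type arguments.

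\emph{Main obstacle.} The hardest step is case (a) when $\Gamma$ itself is geometrically infinite, together with the bookkeeping of passing to finite-index subgroups while keeping the double's linearity. A double along a finite-index subgroup of $\Lambda$ is not obviously commensurable with the original double. For this I would first try Lemma~\ref{cosets} in combination with Corollary~\ref{compactcorollaries}(\ref{compact-2}): embed $\Gamma$ in a compact Lie group via Agol, and verify directly that $\Lambda$ is separable and equal to $\Gamma$ intersected with a compact algebraic subgroup, as in Lemma~\ref{retractsarelagebraic}. This approach avoids the case distinction on $\Gamma$ as long as $\Lambda$ is virtually a virtual retract. That property is known for geometrically finite $\Lambda$ in virtually special Kleinian groups, so only the fibered case (b) then needs the separate argument above.
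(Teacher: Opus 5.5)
Your fibered case (b) has a genuine gap in the passage from $\Gamma'$ back to $\Gamma$. First, $\Gamma'\ast_\Lambda\Gamma'$ is \emph{not} of finite index in $\Gamma\ast_\Lambda\Gamma$ once $\Gamma'\neq\Gamma$. The natural finite-index subgroup is the preimage $H$ of $\Gamma'$ under the folding map $\Gamma\ast_\Lambda\Gamma\to\Gamma$. By Bass--Serre theory, $H$ is the fundamental group of a graph of groups with two vertex groups $\Gamma'$ and $[\Gamma:\Gamma']$ edges carrying the conjugates $g\Lambda g^{-1}$, $g\in\Gamma/\Gamma'$. Hence $H$ surjects onto a free group of rank $[\Gamma:\Gamma']-1$ with $\Gamma'\ast_\Lambda\Gamma'$ in the kernel; already for $\Gamma=\mathbb{Z}$, $\Gamma'=2\mathbb{Z}$, $\Lambda=1$, the subgroup $\langle a^2,b^2\rangle$ has infinite index in $F(a,b)$. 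These conjugates are in general distinct fibers of $\Gamma'$, so your correct observation $\Gamma'\ast_\Lambda\Gamma'\cong\Lambda\rtimes F_2\hookrightarrow\Gamma'\times F_2$ says nothing about $H$. Second, $\Lambda$ is in general only a \emph{virtual} fiber. One gets a finite-index normal $\Gamma'$ with $\Gamma'\cap\Lambda=\ker(\varphi\colon\Gamma'\to\mathbb{Z})$, but $\Lambda$ need not be a fiber of any finite-index subgroup containing it (think of a one-sided surface in a hyperbolic semi-bundle). You also cannot shrink $\Lambda$, since $\Gamma\ast_{\Lambda'}\Gamma$ only surjects onto $\Gamma\ast_\Lambda\Gamma$ and linearity does not pass to quotients.

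The missing idea is to pass to finite index at the level of representations of $\Gamma$, not of the double. Append to a faithful $\rho_0'\colon\Gamma'\to\mathsf{O}(d)$ (Agol) a character of $\Gamma'$ with kernel exactly $\Gamma'\cap\Lambda$, so that $\Gamma'\cap\Lambda$ is the preimage of the compact group $\mathsf{O}(d)\times\{1\}$. Lemma~\ref{retractinduced} transfers this to the representation of $\Gamma$ induced from $\Gamma'$, now with $\Lambda$ itself, and Theorem~\ref{compact-1} (via Corollary~\ref{compactcorollaries}(\ref{compact-2})) applies directly to $\Gamma\ast_\Lambda\Gamma$. This is exactly the compact-subgroup mechanism of your last paragraph, with the character standing in for the retraction that fibers do not admit. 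The paper writes this character as $e^{\varphi}$, but the $\Gamma$-conjugates of $\Gamma'\cap\Lambda$ need not lie in $\ker\varphi$. You should therefore take a unitary character, e.g.\ $\gamma\mapsto$ rotation by $2\pi\theta\varphi(\gamma)$ with $\theta\notin\mathbb{Q}$, so that the induced image of $\Lambda$ stays precompact, as Theorem~\ref{compact-1} requires.

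Your case split is also made in the wrong order. Canary's covering theorem yields ``geometrically finite or virtual fiber'' only when $\Gamma$ itself is geometrically finite, and the second option occurs only when $\Gamma$ is a lattice. If $\mathbb{H}^3/\Gamma$ has a degenerate end, a finitely generated infinite-index $\Lambda$ (e.g.\ the surface group facing that end) can be geometrically infinite without being a virtual fiber, so your trichotomy misses cases. Nor can you classify $\Lambda$ first and then re-embed $\Gamma$ ``keeping $\Lambda$ relatively quasiconvex'': geometric finiteness of $\Lambda$ depends on the chosen discrete faithful representation of $\Gamma$, not on the abstract pair. For instance, a boundary surface group of an acylindrical manifold is quasi-Fuchsian in the convex cocompact structure and degenerate in a structure with a degenerate end.

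The paper proceeds in a different order. It first replaces $\Gamma$ by an isomorphic geometrically finite group, then uses Brooks to place it inside a lattice $\Gamma_0$, together with the injection $\Gamma\ast_\Lambda\Gamma\hookrightarrow\Gamma_0\ast_\Lambda\Gamma_0$. Only then does it classify $\Lambda<\Gamma_0$. Either $\Lambda$ is geometrically finite, hence relatively quasiconvex in the virtually compact special group $\Gamma_0$, and Theorem~\ref{virtuallyspecialrelativelyhyperbolic} applies as in your case (a); this also covers finite-index $\Lambda$, so your case (c) is unnecessary. Or $\Lambda$ is a virtual fiber and is handled as above.
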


\begin{proof}
By the resolution of the density conjecture \cite{MR2079598, MR2925381, MR3001608, MR2821565}, one has that $\Gamma$ is isomorphic to a geometrically finite subgroup of $\mathrm{Isom}(\mathbb{H}^3)$ which we continue to denote by $\Gamma$. By Brooks' theorem \cite{MR860677}, there is a discrete and faithful deformation of the inclusion $\Gamma \rightarrow \mathrm{Isom}(\mathbb{H}^3)$ whose image is contained in a lattice within $\mathrm{Isom}(\mathbb{H}^3)$, so that we may assume $\Gamma$ is itself a lattice in $\mathrm{Isom}(\mathbb{H}^3)$. One then has by work of Agol \cite{MR3104553} and Wise \cite[\S17.c]{MR4298722} that $\Gamma$ is virtually compact special. If $\Lambda$ is geometrically finite within $\mathrm{Isom}(\mathbb{H}^3)$, then $\Lambda$ is relatively quasiconvex in $\Gamma$ (with respect to the maximal parabolic subgroups of $\Gamma$; see \cite[Thm.~1.3]{MR2587434}), and hence the double $\Gamma \ast_\Lambda \Gamma$ is linear by Theorem~\ref{virtuallyspecialrelativelyhyperbolic}. Otherwise, by Canary's covering theorem \cite{MR1396777} and the resolution of the tameness conjecture \cite{arXiv:math/0405568, MR2188131}, there is a finite-index normal subgroup $\Gamma'$ of $\Gamma$ and a map $\varphi: \Gamma' \rightarrow \mathbb{Z}$ such that $\Lambda' := \Gamma' \cap \Lambda$ coincides with the kernel of~$\varphi$. Now choose some faithful representation $\rho'_0: \Gamma' \rightarrow \mathsf{O}(d)$, as guaranteed by Agol \cite{agol2018hyperbolic}, and define $\rho': \Gamma' \rightarrow \mathsf{GL}_{d+1}(\mathbb{R})$ by
\[
\rho(\gamma) = \begin{pmatrix} 
\gamma & \\
& e^{\varphi(\gamma)}
\end{pmatrix}
\]
for $\gamma \in \Gamma'$. Then $\rho(\Lambda')$ is the intersection of $\rho(\Gamma')$ with the compact subgroup
\[
\left\{ \begin{pmatrix} 
g & \\
& 1
\end{pmatrix}\right\}_{g \in \mathsf{O}(d)}
\]
of $\mathsf{GL}_{d+1}(\mathbb{R})$. It thus follows from Lemma~\ref{retractinduced} and Theorem~\ref{compact-1} that the double $\Gamma \ast_\Lambda \Gamma$ is again linear in this case.
\end{proof}

We now proceed to the proofs of Theorems~\ref{absolutelysimple} and~\ref{anisotropic}.

\begin{proof}[Proof~of~Theorem~\ref{absolutelysimple}]
Let $K \subset \mathbb{R}$ be the adjoint trace field of $\Gamma$, and denote by $\mathcal{O}_K$ the ring of integers of $K$. Since $\mathsf{G}$ is absolutely almost simple, if $\Gamma$ is an arithmetic subgroup of $G$, we have (see, for instance, \cite[Lem.~2.6]{MR2511587}) that $K$ is a totally real number field, and there exist a $K$-group ${\bf G} < \mathsf{SL}_d$, a local isomorphism $\phi: \mathsf{G} \rightarrow {\bf G}(\mathbb{R})$, and a finite-index normal subgroup $\Gamma' < \Gamma$ such that
\begin{itemize}
\item $\phi$ is injective on $\Gamma'$;
\item $\phi(\Gamma')$ is a finite-index subgroup of ${\bf G}(\mathcal{O}_K)$;
\item ${\bf G}^\sigma(\mathbb{R})$ is compact for each non-identity embedding $\sigma: K \rightarrow \mathbb{R}$.
\end{itemize}

Let $\Lambda' := \Gamma' \cap \Lambda$, and let $\varphi: \Gamma \rightarrow \mathsf{GL}_n(\mathbb{R})$ be the representation of $\Gamma$ induced by $\phi: \Gamma' \rightarrow \mathsf{SL}_d(\mathbb{R})$. Since $\Lambda'$ is of finite index in $\Lambda$ and the latter is cocompact in $\overline{\Lambda}^\mathrm{Zar}$, we have that $\Lambda'$ is cocompact in $\overline{\Lambda}^\mathrm{Zar}$ and hence cocompact in $\overline{\Lambda'}^\mathrm{Zar}$. It follows that $\Lambda'$ is of finite index in $\Gamma \cap \overline{\Lambda'}^\mathrm{Zar}$. Since the restriction of $\varphi$ to $\Gamma'$ extends to an $\mathbb{R}$-algebraic representation of $\mathsf{G}$, we have that $\varphi(\Lambda')$ is of finite index in $\varphi(\Gamma') \cap \overline{\varphi(\Lambda')}^\mathrm{Zar}$. Since the latter is of finite index in $\varphi(\Gamma) \cap \overline{\varphi(\Lambda)}^\mathrm{Zar}$, and since $\varphi(\Lambda)$ contains $\varphi(\Lambda')$, we conclude that $\varphi(\Lambda)$ is of finite index in $\varphi(\Gamma) \cap  \overline{\varphi(\Lambda)}^\mathrm{Zar}$.

Since we are assuming that $K \neq \mathbb{Q}$, there is at least one non-identity embedding $\sigma_0 : K \rightarrow \mathbb{R}$. We may then postconjugate $\varphi$ such that $\varphi(\Gamma) \subset \mathsf{GL}_n(K)$ and $\varphi(\Gamma')^{\sigma_0}$ is precompact in $\mathsf{GL}_n(\mathbb{R})$. The finite-index supergroup $\varphi(\Gamma)^{\sigma_0}$ of $\varphi(\Gamma')^{\sigma_0}$  is then also precompact in $\mathsf{GL}_n(\mathbb{R})$. We may now apply Corollary~\ref{compactcorollaries}(\ref{precompactgaloisconjugate}) to conclude that the double $\Gamma \ast_\Lambda \Gamma$ is linear. This proves Theorem~\ref{absolutelysimple}(\ref{absolutelysimple1}).

Now suppose $\mathsf{G}$ is not locally isomorphic to $\mathsf{O}(n,1)$ or $\mathsf{U}(n,1)$ for any $n \in \mathbb{N}$. Then $\Gamma$ is an arithmetic subgroup of $\mathsf{G}$ by the arithmeticity theorems of Margulis \cite{Mar} and Gromov--Schoen~\cite{GS}, and hence we conclude from Theorem~\ref{absolutelysimple}(\ref{absolutelysimple1}) that the double $\Gamma \ast_\Lambda \Gamma$ is linear as soon as $\Lambda$ is cocompact in $\overline{\Lambda}^\mathrm{Zar}$ and separable in $\Gamma$. We also necessarily have that $\Gamma$ is cocompact in $\mathsf{G}$; indeed, taking $\Gamma' < \Gamma$ and $\phi$ as in the proof of (\ref{absolutelysimple1}), since $\phi(\Gamma')^{\sigma_0}$ is precompact, we have that $\phi(\Gamma')^{\sigma_0}$ contains no nontrivial unipotent elements, and hence neither does $\Gamma'$, so that~$\Gamma'$ is cocompact in $\mathsf{G}$ by Godement's criterion (see, for instance, \cite[Prop.~5.3.1]{MR3307755}). Now if~$\Lambda$ is not separable in $\Gamma$, then the double $\Gamma \ast_\Lambda \Gamma$ is not residually finite, let alone linear. Finally, if $\Lambda$ is not cocompact in $\overline{\Lambda}^\mathrm{Zar}$, then since $\Gamma$ is cocompact in $\mathsf{G}$, we have that $\Lambda$ is of infinite covolume in $\overline{\Lambda}^\mathrm{Zar}$, and hence the double $\Gamma \ast_\Lambda \Gamma$ again fails to be linear by \cite[Cor.~5.3]{TT-IMRN}.
\end{proof}

\begin{remark}\label{indefiniteorthogonal}
\normalfont{
By Margulis's arithmeticity theorem \cite{Mar}, any lattice $\Gamma < \mathsf{O}(p,q)$ for $p \geq q \geq 2$ and $p+q \geq 5$ is arithmetic. If moreover $p+q$ is odd, then it follows from the classification of arithmetic subgroups of $\mathsf{O}(p,q)$ that $\Gamma$ is commensurable (in the wide sense) to $\mathsf{O}(f; \mathcal{O}_K)$, where $\mathcal{O}_K$ is the ring of integers of some totally real number field $K \subset \mathbb{R}$, and $f$ is a quadratic form of signature $(p,q)$ with coefficients in $K$; see \cite[pp.~380]{MR3307755}. In this case, the field $K$ coincides with the adjoint trace field of $\Gamma$ (again by \cite[Lem.~2.6]{MR2511587}, for instance). If $\Gamma$ is moreover cocompact, then $f$ must be anisotropic over $K$ and thus $K \neq \mathbb{Q}$ by Meyer's theorem. 
}
\end{remark}

\begin{proof}[Proof~of~Theorem~\ref{anisotropic}]
Let $\Gamma'$ be a finite-index normal subgroup of $\Gamma$ such that $\Gamma' \subset {\bf G}(\mathcal{O}_{K,S})$, and let $\Lambda' := \Gamma' \cap \Lambda$. Then the projection $\phi: \mathsf{G} \rightarrow {\bf G}(K_w)$ is faithful on ${\bf G}(\mathcal{O}_{K,S})$ and hence on~$\Gamma'$. Let $\varphi : \Gamma \rightarrow \mathsf{GL}_n(\mathbb{R})$ be the representation induced by~$\phi \bigr|_{\Gamma'}$. Then, since $\phi(H) = {\bf H}(K_w)$ is compact, we have as in the proof of Theorem~\ref{absolutelysimple} that the closure $\overline{\varphi(\Lambda)} \subset \mathsf{GL}_n(\mathbb{R})$ is compact and that $\varphi(\Lambda)$ is of finite index in $\varphi(\Gamma) \cap \overline{\varphi(\Lambda)}$. Since $\Lambda$ was moreover assumed separable in $\Gamma$, we conclude from Corollary~\ref{compactcorollaries}(\ref{compact-2}) that the double $\Gamma \ast_\Lambda \Gamma$ is linear over $\mathbb{R}$.
\end{proof}

\section{Doubling along cocompact stabilizers}\label{doublingalongreflective}

In this section, we prove Theorem~\ref{reflective}. We will need the following consequence of Theorem~\ref{main}.

\begin{proposition}
Let $\Gamma_1,\Gamma_2$ be groups with a common subgroup $\Lambda$. Suppose we are given for $j=1,2$ and for some integral domain $\mathsf{R}$ faithful representations $\phi^j: \Gamma_j \rightarrow \mathsf{GL}_n(\mathsf{R})$ and $\psi^j: \Gamma_j \rightarrow \mathsf{GL}_n(\mathsf{R})$ such that
\begin{itemize}
\item the representations $\phi^1, \phi^2, \psi^1$, and $\psi^2$ all coincide on $\Lambda$; and
\item for $j=1,2$ and $\gamma \in \Gamma_j$, if $\phi^j(\gamma)^{-1}\psi^j(\gamma)$ is unipotent, then $\gamma \in \Lambda$.
\end{itemize}
Then, for $i=1, \ldots n+1$ and $j=1, 2$, there are representations $\rho_i^j : \Gamma_j \rightarrow \mathsf{GL}_{n_i}(\mathsf{R})$, where $n_i = \binom{n+1}{i}$, all of which are faithful with the exception of the  $\rho_{n+1}^j$, satisfying the hypotheses of Lemma~\ref{multiple}. In particular, there is a faithful representation of $\Gamma_1 \ast_\Lambda \Gamma_2$ into $\mathsf{GL}_r(\mathsf{R}[s,t])$, where 
\[
r=\left(\binom{2n+2}{n+1}-1\right)^2+1.
\]   
\end{proposition}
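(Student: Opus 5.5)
The plan is to build, for each $i$, a representation whose top-left entry is a normalized trace of $\wedge^i$ of $A_\gamma:=\phi^j(\gamma)^{-1}\psi^j(\gamma)$. Lemma~\ref{multiple} then applies, because unipotence of $A_\gamma$ is detected by these traces. I read $n_i$ as $\binom{n+1}{i}^2$, since $\sum_{i=1}^{n+1}\binom{n+1}{i}^2=\binom{2n+2}{n+1}-1$ matches the stated $r$.

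\textbf{Construction.} First stabilize by setting $\hat\phi^j:=\phi^j\oplus 1$ and $\hat\psi^j:=\psi^j\oplus 1$, which take values in $\mathsf{GL}_{n+1}(\mathsf{R})$. For $1\le i\le n+1$, let $U_i:=\wedge^i\mathsf{R}^{n+1}$, of rank $\binom{n+1}{i}$. Define $\rho_i^j$ on $\mathsf{End}(U_i)$, of rank $\binom{n+1}{i}^2$, by
\[
\rho_i^j(\gamma)M:=\wedge^i\hat\psi^j(\gamma)\,M\,\wedge^i\hat\phi^j(\gamma)^{-1}.
\]
The point $\mathrm{Id}_{U_i}$ is fixed by every $\lambda\in\Lambda$, because $\phi^j$ and $\psi^j$ agree on $\Lambda$. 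The hyperplane $\{M:\mathrm{tr}(M)=0\}$ is preserved by $\Lambda$, since conjugation preserves trace. In a basis whose first vector is $\mathrm{Id}_{U_i}$ and whose remaining vectors are traceless, the top-left entry of $\rho_i^j(\gamma)$ is
\[
\frac{\mathrm{tr}\,\wedge^i(\hat A_\gamma)}{\binom{n+1}{i}},\qquad \hat A_\gamma:=\hat\phi^j(\gamma)^{-1}\hat\psi^j(\gamma)=A_\gamma\oplus 1.
\]
This uses the cyclic identity $\mathrm{tr}(\wedge^i\hat\phi^{-1}\wedge^i\hat\psi)=\mathrm{tr}(\wedge^i \hat A_\gamma)$. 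Since $\phi^1,\phi^2,\psi^1,\psi^2$ coincide on $\Lambda$, the representations $\rho_i^1$ and $\rho_i^2$ coincide on $\Lambda$ and land in $\{1\}\times\mathsf{GL}_{n_i-1}$. For $i\le n$, faithfulness of $\rho_i^j$ should follow from faithfulness of $\psi^j$: if $\wedge^i\hat\psi\, M\,\wedge^i\hat\phi^{-1}=M$ for all $M$, then taking $M=\mathrm{Id}$ and then $M$ ranging over all endomorphisms forces $\wedge^i\hat\psi(\gamma)$ to be a scalar commuting with everything. It remains to rule out a nontrivial scalar $\wedge^i\hat\psi(\gamma)$ for $i\le n$. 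The extra fixed coordinate of $\hat\psi$ handles this: a wedge involving $e_{n+1}$ and one not involving it must scale equally, which forces $\psi(\gamma)$ to be the identity. For $i=n+1$, $U_{n+1}$ has rank one and $\rho_{n+1}^j(\gamma)$ is the scalar $\det A_\gamma$, which is not faithful, as the statement allows.

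\textbf{Detection.} Suppose every top-left entry equals $1$. Then $\mathrm{tr}\wedge^i(\hat A_\gamma)=\binom{n+1}{i}$ for all $i$, so the characteristic polynomial of $\hat A_\gamma$ is $(x-1)^{n+1}$. Hence $\hat A_\gamma$ is unipotent, so $A_\gamma$ is unipotent, and by hypothesis $\gamma\in\Lambda$. Lemma~\ref{multiple} then gives the faithful representation into $\mathsf{GL}_r(\mathsf{R}[s,t])$ with $r=(\sum_i n_i)^2+1$, which is the stated value.

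\textbf{Main obstacle.} The change of basis above divides by $\binom{n+1}{i}$. This is harmless over $\mathrm{Frac}(\mathsf{R})$, but I would need to check that it can be avoided over $\mathsf{R}$. My first attempt is to note that Theorem~\ref{main} and Lemma~\ref{multiple} only use the top-left entry through the condition ``equals $1$ iff in $\Lambda$'', together with the block structure on $\Lambda$. If that suffices, I would either work over $\mathrm{Frac}(\mathsf{R})$ and observe that integrality is not needed for faithfulness, or choose the first basis vector to be $\mathrm{Id}$ and the complement to be traceless, accepting rational coefficients in the conjugating matrix. I would also double-check that faithfulness for $i\le n$ genuinely survives the scalar ambiguity, and if it does not, I would fall back on the fact that the lemma only requires one of the $\rho_i^j$ to be faithful.
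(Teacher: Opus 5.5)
Your proposal is the paper's proof, step for step. Both stabilize by a trivial coordinate and let $\Gamma_j$ act on $\mathrm{End}(\wedge^i\mathsf{R}^{n+1})$ by two-sided multiplication. The paper puts $\wedge^i\phi^j$ on the left and $(\wedge^i\psi^j)^{-1}$ on the right; this is immaterial, since $\phi\psi^{-1}$ is unipotent iff $\phi^{-1}\psi$ is. Both then take $\mathrm{Id}$ followed by traceless matrices as a basis, detect unipotence through the characteristic polynomial, and apply Lemma~\ref{multiple}.

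Two of your details are more accurate than the written proof. First, $n_i$ must be $\binom{n+1}{i}^2$ to produce the stated $r$. Second, the top-left entry is $\mathrm{tr}\wedge^i(\hat A_\gamma)/\binom{n+1}{i}$. The paper decomposes $\wedge^i\big(\phi^j(\gamma)\psi^j(\gamma)^{-1}\big)$ as $\mathrm{tr}(\cdots)\,\mathrm{Id}+M$ with $M$ traceless, which drops this factor. Without the factor, ``top-left entry $=1$'' would not translate into characteristic polynomial $(x-1)^{n+1}$.

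Your faithfulness sketch is also correct. From $\wedge^i\hat\psi(\gamma)=c$ you get $\wedge^i\psi(\gamma)=c=\wedge^{i-1}\psi(\gamma)$. Hence $v\wedge(\psi(\gamma)w-w)=0$ for all $v\in\wedge^{i-1}\mathsf{R}^n$ and $w\in\mathsf{R}^n$, which forces $\psi(\gamma)=I$ when $i\le n$.

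The obstacle you flag is genuine, and the paper's proof passes over it silently. The traceless matrices complement $\mathsf{R}\,\mathrm{Id}$ in $\mathrm{End}(U_i)$ only when $N:=\binom{n+1}{i}$ is invertible.

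\emph{Where your fallback works.} Passing to $K=\mathrm{Frac}(\mathsf{R})$ is fine whenever $N\neq 0$ in $K$, e.g.\ in characteristic $0$. Theorem~\ref{main} and Lemma~\ref{multiple} hold verbatim over $K$, so you get a faithful representation into $\mathsf{GL}_r(K[s,t])$. That suffices for linearity and for the paper's uses; in Theorem~\ref{reflective-2} the ring is a field anyway. It does not, however, literally land in $\mathsf{GL}_r(\mathsf{R}[s,t])$.

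\emph{Where it fails.} It is not harmless in characteristic $p$ when $p$ divides some $\binom{n+1}{i}$, e.g.\ $n+1=p$. Then $\mathrm{Id}$ is itself traceless, and no such basis exists even over $K$.

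\emph{A fix over $\mathsf{R}$ in every characteristic.} It costs one extra dimension for each $i\le n$, so $r$ grows slightly.
\begin{enumerate}
\item Let $\Gamma_j$ act on $\mathrm{End}(U_i)\oplus\mathsf{R}$, trivially on the second summand.
\item Take $v=(\mathrm{Id},1)$ and the $\Lambda$-invariant functional $v^*(M,c)=\mathrm{tr}(M)-(N-1)c$.
\item Since $v^*(v)=1$, one has $\mathrm{End}(U_i)\oplus\mathsf{R}=\mathsf{R}v\oplus\ker v^*$ over $\mathsf{R}$. Here $\ker v^*$ is $\Lambda$-invariant and free, with basis a basis of the traceless matrices together with $\big((N-1)E_{11}(N),1\big)$. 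Also $v$ is fixed by $\Lambda$.
\item In this basis the top-left entry is $v^*(\rho_i^j(\gamma)v)=\mathrm{tr}\wedge^i(\hat A_\gamma)-N+1$. This equals $1$ exactly when $\mathrm{tr}\wedge^i(\hat A_\gamma)=N$, so your detection step goes through unchanged.
\end{enumerate}
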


\begin{proof} By composing the $\phi^j$ and the $\psi^j$ with the embedding $\mathsf{GL}_{n}(\mathsf{R})\times \{1\} \xhookrightarrow{} \mathsf{GL}_{n+1}(\mathsf{R})$, we obtain representations $\Gamma_j \rightarrow \mathsf{GL}_{n+1}(\mathsf{R})$, which we continue to denote by $\phi^j$ and $\psi^j$, fixing the standard basis vector $e_{n+1}$ and mapping no nontrivial matrix to a scalar matrix. Now for $i=1, \ldots n+1$ and $j=1, 2$, let $\rho_i^{j} : \Gamma_j \rightarrow \mathsf{GL}(\mathrm{End}(\wedge^{i}\mathsf{R}^{n+1})\big)$ be the representation given by
\[
\rho_i^{j}(\gamma)(M) = \big(\wedge^{i}\phi^j(\gamma)\big)M\big(\wedge^{i}\psi^j(\gamma)\big)^{-1}
\]
for $M\in \mathrm{End}(\wedge^{j}\mathsf{R}^{n+1})$ and $\gamma \in \Gamma_j$. Note that, for $i=1, \ldots, n$, the $\rho_i^{j}$ are faithful. Indeed, if $\rho_i^{j}(\gamma)=\textup{Id}$, then, by considering the action on the endomorphism $M=\mathrm{Id}$, we see that $\wedge^{i}\phi^j(\gamma) = \wedge^{i}\psi^j(\gamma)$, and hence $\wedge^{i}\phi^j(\gamma)$ commutes with all endomorphisms of $\wedge^{j}\mathsf{R}^{n+1}$, so that $\wedge^{i}\phi^j(\gamma)$ is scalar. For $i < n+1$, this implies that $\phi^j(\gamma)$ is itself scalar, so that $\gamma = \mathrm{Id}$.

Since $\phi^j$ and $\psi^j$ coincide on $\Lambda$, we have that $\rho_i^{j}(\Lambda)$ fixes the identity map $\mathrm{Id} \in \mathrm{End}(\wedge^{i}\mathsf{R}^{n+1})$ and preserves the $\mathsf{R}$-submodule $V_i:=\{M\in \textup{End}(\wedge^{i}\mathsf{R}^{n+1}):\textup{tr}(M)=0\}$. In addition, for every $\gamma \in \Gamma_j$, we have 
\[
\rho_i^{j}(\gamma)(\mathrm{Id})=\wedge^{i}\big(\phi^j(\gamma)\psi^j(\gamma)^{-1}\big) =\textup{tr}\big(\wedge^{i}\big(\phi^j(\gamma)\psi^j(\gamma)^{-1}\big)\big)\mathrm{Id}+M_{ij\gamma}
\]
for some $M_{ij\gamma}\in V_i$. Thus, if $\mathcal{B}_i$ is an ordered basis for $\textup{End}(\wedge^{i}\mathsf{R}^{n+1})$ whose first basis vector is the identity endomorphism $\mathrm{Id}$ and whose remaining basis vectors are contained in $V_i$, then, writing $\rho_i^{j}(\gamma)$ with respect to the basis $\mathcal{B}_i$, we have that the top-left entry of $\rho_i^{j}(\gamma)$ is
\begin{equation}\label{char-poly}
\left( \rho_i^{j}(\gamma)\right)_{11} = \mathrm{tr}\big(\wedge^j(\phi^j(\gamma)\psi^j(\gamma)^{-1})\big)
\end{equation}
for any $\gamma \in \Gamma_j$.

We now check that the hypotheses of Lemma~\ref{multiple} are satisfied for the representations $\rho_i^j$. 
We have already observed that $\rho_i^j(\Lambda)$ fixes the identity endomorphism of $\wedge^{i}\mathsf{R}^{n+1}$ and preserves $V_i$, so that $\rho_i^j(\Lambda) \subset \{1\} \times \mathsf{GL}_{n_i-1}(\mathsf{R})$. That, for $i=1, \ldots, n+1$, the representations $\rho_i^1$ and $\rho_i^2$ coincide on $\Lambda$ follows immediately from the fact that the representations $\phi^1, \phi^2, \psi^1$, and $\psi^2$ all coincide on $\Lambda$. Finally, if $\gamma \in \Gamma_j$ and $\left( \rho_i^{j}(\gamma)\right)_{11} = 1$ for $i=1,\ldots, n+1$, then, by (\ref{char-poly}), the characteristic polynomial of $\phi^j(\gamma)\psi^j(\gamma)^{-1}$ coincides with that of the identity matrix, i.e., the matrix $\phi^j(\gamma)\psi^j(\gamma)^{-1}$ is unipotent, and hence $\gamma \in \Lambda$ by assumption. Thus, by applying Lemma~\ref{multiple} to the representations $\rho_i^j$, we obtain a faithful representation of $\Gamma_1 \ast_\Lambda \Gamma_2$ into $\mathsf{GL}_{r}(\mathsf{R}[s,t])$ where $r=d^2+1$ and 
\[
d=\sum_{i=1}^{n+1}\dim_{\mathsf{R}}\big(\textup{End}(\wedge^{i}\mathsf{R}^{n+1})\big)=\sum_{i=1}^{n+1}\binom{n+1}{i}^2=\binom{2n+2}{n+1}-1.\qedhere\]
\end{proof}

One proves in precisely the same manner the following more technical statement.

\begin{theorem}\label{unipotent-free-1} Let $\Gamma_1,\Gamma_2$ be groups with a common subgroup $\Lambda$. Suppose we are given for $j=1,2$, for some integer $\ell \geq 1$, for $k=1, \ldots, \ell$, and for some integral domain $\mathsf{R}$ faithful representations $\phi_k^j: \Gamma_j \rightarrow \mathsf{GL}_n(\mathsf{R})$ and $\psi_k^j: \Gamma_j \rightarrow \mathsf{GL}_n(\mathsf{R})$ such that
\begin{itemize}
\item the representations $\phi_k^j$ and $\psi_{k'}^{j'}$ all coincide on $\Lambda$; and
\item for $j=1,2$ and $\gamma \in \Gamma_j$, if $\phi_k^j(\gamma)^{-1}\psi_k^j(\gamma)$ is unipotent for every $k=1, \ldots, \ell$, then $\gamma \in \Lambda$.
\end{itemize}
Then, for $i=1, \ldots n+1$, for $j=1, 2$, and for $k=1, \ldots, \ell$, there are representations $\rho_i^{j,k} : \Gamma_j \rightarrow \mathsf{GL}_{n_i}(\mathsf{R})$, where $n_i = \binom{n+1}{i}$, all of which are faithful with the exception of the $\rho_{n+1}^{j,k}$, satisfying
\begin{itemize}
\item $\rho_i^{j,k}(\Lambda) \subset \{1\} \times \mathsf{GL}_{n_i-1}(\mathsf{R})$;
\item the representations $\rho_i^{1,k}$ and $\rho_i^{2,k}$ coincide on $\Lambda$ for $i=1, \ldots, n+1$ and $k=1, \ldots, \ell$; and
\item for $j=1,2$ and $\gamma \in \Gamma_j$, if the top-left entry of $\rho_i^{j,k}(\gamma)$ is equal to $1$ for every $i$ and $k$, then $\gamma \in \Lambda$.
\end{itemize}
In particular, there is a faithful representation of $\Gamma_1 \ast_\Lambda \Gamma_2$ into $\mathsf{GL}_r(\mathsf{R}[s,t])$, where 
\[
r=\ell^2\left(\binom{2n+2}{n+1}-1\right)^2+1.
\]\end{theorem}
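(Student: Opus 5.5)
The plan is to run the argument of the preceding proposition once for each index $k=1,\ldots,\ell$, and then feed all $\ell(n+1)$ resulting representations into Lemma~\ref{multiple} simultaneously.

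\emph{Step 1: construct the representations $\rho_i^{j,k}$.} First compose each $\phi_k^j$ and $\psi_k^j$ with the embedding $\mathsf{GL}_n(\mathsf{R})\times\{1\}\hookrightarrow\mathsf{GL}_{n+1}(\mathsf{R})$. After this, no nontrivial element maps to a scalar matrix, and the hypotheses are unchanged. For $i=1,\ldots,n+1$ define
\[
\rho_i^{j,k}(\gamma)(M)=\big(\wedge^i\phi_k^j(\gamma)\big)M\big(\wedge^i\psi_k^j(\gamma)\big)^{-1}
\]
on $\mathrm{End}(\wedge^i\mathsf{R}^{n+1})$. Write it in an ordered basis $\mathcal{B}_i$ whose first vector is $\mathrm{Id}$ and whose other vectors span the trace-zero submodule $V_i$.

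\emph{Step 2: verify the three properties.}
\begin{itemize}
\item \textbf{Faithfulness for $i\le n$.} Evaluating at $M=\mathrm{Id}$ gives $\wedge^i\phi_k^j(\gamma)=\wedge^i\psi_k^j(\gamma)$. This endomorphism then commutes with all of $\mathrm{End}(\wedge^i\mathsf{R}^{n+1})$, so it is scalar. Since $i<n+1$, it follows that $\phi_k^j(\gamma)$ is scalar. Hence $\gamma$ is trivial, because $\phi_k^j$ is faithful and the extended matrix fixes $e_{n+1}$.
\item \textbf{$\Lambda$ lies in the first-coordinate stabilizer.} All $\phi_k^j,\psi_{k'}^{j'}$ agree on $\Lambda$. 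So for $\lambda\in\Lambda$ the map $\rho_i^{j,k}(\lambda)$ is conjugation, which fixes $\mathrm{Id}$ and preserves $V_i$. Thus $\rho_i^{j,k}(\Lambda)\subset\{1\}\times\mathsf{GL}_{n_i-1}(\mathsf{R})$.
\item \textbf{Agreement on $\Lambda$.} The same observation shows $\rho_i^{1,k}|_\Lambda=\rho_i^{2,k}|_\Lambda$.
\item \textbf{Top-left entry.} Expanding $\rho_i^{j,k}(\gamma)(\mathrm{Id})$ in $\mathcal{B}_i$ and comparing traces gives the entry $\mathrm{tr}\,\wedge^i\big(\phi_k^j(\gamma)\psi_k^j(\gamma)^{-1}\big)$, up to the normalization by $\mathrm{tr}(\mathrm{Id})$ handled as in the proposition.
\end{itemize}

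\emph{Step 3: from top-left entries to membership in $\Lambda$.} Suppose all top-left entries equal $1$. Then for each $k$, the elementary symmetric functions of the eigenvalues of $\phi_k^j(\gamma)\psi_k^j(\gamma)^{-1}$ agree with those of the identity. So that matrix is unipotent, as is its conjugate $\phi_k^j(\gamma)^{-1}\psi_k^j(\gamma)$. By hypothesis, $\gamma\in\Lambda$.

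\emph{Step 4: apply Lemma~\ref{multiple}.} Use the family $\{\rho_i^{j,k}\}_{i,k}$, which has $p=\ell(n+1)\ge 2$ members, at least one of them faithful. The total dimension is
\[
d=\sum_k\sum_i\binom{n+1}{i}^2=\ell\left(\binom{2n+2}{n+1}-1\right),
\]
so the lemma gives a faithful representation into $\mathsf{GL}_{d^2+1}(\mathsf{R}[s,t])$, which is the stated value of $r$.

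\emph{Main obstacle.} The one delicate point is the normalization in the top-left entry. As in the proposition, the coefficient of $\mathrm{Id}$ should be $\mathrm{tr}(A)/\mathrm{tr}(\mathrm{Id})$, not $\mathrm{tr}(A)$. For the argument it is enough that the entry equals $1$ exactly when $\mathrm{tr}\,\wedge^i(\cdot)=\binom{n+1}{i}$. If $\mathrm{tr}(\mathrm{Id})=\binom{n+1}{i}$ is not invertible in $\mathsf{R}$, I would first pass to the fraction field, or rescale the basis vector $\mathrm{Id}$, mirroring the proposition's convention.
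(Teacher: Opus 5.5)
Your proposal follows the paper's proof: the paper proves this theorem ``in precisely the same manner'' as the preceding proposition, i.e.\ it builds $\rho_i^{j,k}$ on $\mathrm{End}(\wedge^i\mathsf{R}^{n+1})$ from $\wedge^i\phi_k^j$ and $\wedge^i\psi_k^j$ for each $k$, then applies Lemma~\ref{multiple} to all $\ell(n+1)$ of them, with $d=\ell\bigl(\binom{2n+2}{n+1}-1\bigr)$. Your normalization remark correctly fixes the paper's formula (the $\mathrm{Id}$-coordinate is $\mathrm{tr}\,\wedge^i(\phi_k^j(\gamma)\psi_k^j(\gamma)^{-1})/\binom{n+1}{i}$), but rescaling the basis vector $\mathrm{Id}$ cannot change a top-left entry, and $\mathcal{B}_i$ exists only when $\binom{n+1}{i}$ is a unit in the coefficient ring; the real fix is to pass to $\mathrm{Frac}(\mathsf{R})$ in characteristic zero, giving a representation over $\mathrm{Frac}(\mathsf{R})[s,t]$, while in characteristic dividing some $\binom{n+1}{i}$ the construction (yours and the paper's) is unavailable as written.
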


\medskip

We record the following consequence of Theorem~\ref{unipotent-free-1}.

\begin{theorem}\label{unipotent-free} Let $\mathsf{R}$ be an integral domain, let $\Gamma <\mathsf{GL}_m(\mathsf{R})$ be a subgroup, and suppose one has representations $\varphi_0, \ldots,  \varphi_\ell:\Gamma \rightarrow \mathsf{GL}_m(\mathsf{R})$ the subgroup generated by whose images contains no nontrivial unipotent elements. Then the double $\Gamma \ast_\Lambda \Gamma$ is linear, where 
\[
\Lambda :=\big\{\gamma \in \Gamma :\varphi_0(\gamma)=\cdots=\varphi_\ell(\gamma)\big\}.
\] \end{theorem}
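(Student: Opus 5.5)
We will deduce the statement from Theorem~\ref{unipotent-free-1} applied with $\Gamma_1=\Gamma_2=\Gamma$, with $\Lambda$ as in the statement and with $\ell$ pairs of representations. The natural choice is to set, for $k=1,\ldots,\ell$ and $j=1,2$,
\[
\phi_k^j(\gamma):=\begin{pmatrix}\gamma & \\ & \varphi_0(\gamma)\end{pmatrix},\qquad \psi_k^j(\gamma):=\begin{pmatrix}\gamma & \\ & \varphi_k(\gamma)\end{pmatrix},
\]
viewed as representations $\Gamma\rightarrow\mathsf{GL}_{2m}(\mathsf{R})$. The first block is the given inclusion $\Gamma<\mathsf{GL}_m(\mathsf{R})$, which makes all of these representations faithful; this is needed because Theorem~\ref{unipotent-free-1} asks for faithful $\phi_k^j,\psi_k^j$, while the $\varphi_i$ themselves need not be injective.

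The first hypothesis of Theorem~\ref{unipotent-free-1} is immediate. For $\lambda\in\Lambda$ we have $\varphi_0(\lambda)=\varphi_k(\lambda)$ for every $k$, so all the $\phi_k^j$ and $\psi_{k'}^{j'}$ agree on $\Lambda$; the $j$-dependence is trivial since both factors equal $\Gamma$.

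The second hypothesis is the key step. Suppose $\gamma\in\Gamma$ is such that $\phi_k^j(\gamma)^{-1}\psi_k^j(\gamma)=\mathrm{diag}\big(I_m,\varphi_0(\gamma)^{-1}\varphi_k(\gamma)\big)$ is unipotent for every $k$. Then each $\varphi_0(\gamma)^{-1}\varphi_k(\gamma)$ is a unipotent element of the subgroup generated by the images of $\varphi_0,\ldots,\varphi_\ell$. By hypothesis that subgroup has no nontrivial unipotents, so $\varphi_0(\gamma)=\varphi_k(\gamma)$ for all $k$, which means $\gamma\in\Lambda$. Theorem~\ref{unipotent-free-1} then gives a faithful representation of $\Gamma\ast_\Lambda\Gamma$ into $\mathsf{GL}_r(\mathsf{R}[s,t])$, and since $\mathsf{R}[s,t]$ is an integral domain, the double is linear.

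The main point requiring care is the faithfulness of $\phi_k^j$ and $\psi_k^j$, which the direct-sum trick handles. One should also check that padding by the identity block leaves unipotence unaffected, which is clear since the difference element is block diagonal with an identity block. The case $\ell=0$ is degenerate: there $\Lambda=\Gamma$, so the double is $\Gamma$ itself, which is linear.
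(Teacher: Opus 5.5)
Your proposal is correct and is essentially the paper's proof: it applies Theorem~\ref{unipotent-free-1} with $\Gamma_1=\Gamma_2=\Gamma$, $\phi_k^j=\mathrm{id}_\Gamma\times\varphi_0$ and $\psi_k^j=\mathrm{id}_\Gamma\times\varphi_k$, using the identity block to ensure faithfulness and the no-unipotents hypothesis to verify the second condition. Your separate handling of the degenerate case $\ell=0$ is a harmless addition that the paper leaves implicit.
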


\begin{proof}[Proof of Theorem \ref{unipotent-free}] For $\gamma \in \Gamma$, we have by our assumptions that $\gamma \in \Lambda$ if and only if  $\varphi_0(\gamma)^{-1}\varphi_k(\gamma)$ is unipotent for every $k= 1, \ldots, \ell$. We may now apply Theorem~\ref{unipotent-free-1}
with $\Gamma_1 = \Gamma_2 = \Gamma$, and with $\phi_k^1 = \phi_k^2 = \mathrm{id}_\Gamma \times \varphi_0$ and $\psi_k^1 = \psi_k^2 = \mathrm{id}_\Gamma \times \varphi_k$ for $k=1, \ldots, \ell$, where $\mathrm{id}_\Gamma \times \varphi_k : \Gamma \rightarrow \mathsf{GL}_{2m}(\mathsf{R})$ is the representation given by $\gamma \mapsto \begin{pmatrix}[0.8] \gamma &  \\  & \varphi_k(\gamma) \end{pmatrix}$ for $k=0, \ldots, \ell$. We obtain in this manner a faithful representation of the double $\Gamma \ast_\Lambda \Gamma$ into $\mathsf{GL}_r(\mathsf{R}[s,t])$, where
where $r=\ell^2\big(\binom{4m+2}{2m+1}-1\big)^2+1$.\end{proof}

\begin{Exmp}
\normalfont{
Let $\mathsf{G}$ be any semisimple real algebraic group, and fix a Langlands decomposition~$\mathsf{MAN}$ of a minimal parabolic subgroup of $\mathsf{G}$. Then for any cocompact lattice $\Gamma < \mathsf{G}$, if $\mathsf{H}$ is a Cartan subgroup of $\mathsf{G}$, i.e., a conjugate of $\mathsf{MA}$ in $\mathsf{G}$, such that $\Lambda:= \Gamma \cap \mathsf{H}$ is cocompact in~$\mathsf{H}$, then the double $\Gamma \ast_\Lambda \Gamma$ is linear. (That such $\mathsf{H}$ always abound follows from Poincar\'e recurrence on the compact homogeneous space~$\Gamma \backslash \mathsf{G}$, together with a well-known argument of Selberg \cite[Lem.~1.10]{MR302822}.) Indeed, since $\Lambda$ is cocompact in $\mathsf{H}$, there is some element $h \in \Lambda$ such that $\mathsf{H}$ is precisely the centralizer of $h$ in $\mathsf{G}$. Linearity of the double $\Gamma \ast_\Lambda \Gamma$ thus follows from Theorem~\ref{unipotent-free} if one takes $\varphi_0$ to be the inclusion $\Gamma \rightarrow \mathsf{G}$ and $\varphi_1$ to be the inclusion postconjugated by $h$. Note that Theorem~\ref{compact-1} cannot be applied to deduce linearity of some of these doubles; for example, if $\mathsf{G}$ is absolutely almost simple and of real rank $\geq 2$ and $\Gamma$ has adjoint trace field $\mathbb{Q}$, then it follows from Margulis superrigidity~\cite{Mar} that the image of $\Lambda$ under no faithful finite-dimensional real representation of $\Gamma$ is precompact. Cocompact such $\Gamma$ exist for instance within $\mathsf{G} = \mathsf{SL}_d(\mathbb{R})$ for any $d \geq 3$ (see \cite{MR2655311}); note that Theorem~\ref{reflective} also does not apply to the latter examples since in those cases the pair $(\mathsf{G}, \mathsf{MA})$ fails to be affine symmetric.
}
\end{Exmp}

Theorem~\ref{reflective} now follows immediately from the following more general statement.

\begin{theorem}\label{reflective-2}
Let $X$ be a Riemannian symmetric space of noncompact type and $Y_1, \ldots, Y_\ell$ be reflective submanifolds of $X$ with nonempty intersection $Y$. Let $\Gamma < \mathrm{Isom}(X)$ be a discrete subgroup, let $\Lambda_k < \Gamma$ be the stabilizer of $Y_k$ in $\Gamma$ for $k=1, \ldots, \ell$, and let $\Lambda = \bigcap_{k=1}^\ell \Lambda_k$. If $\Lambda_k$ acts cocompactly on $Y_k$ for every $k$, then the double $\Gamma \ast_\Lambda \Gamma$ embeds in $\mathsf{GL}_r(\mathbb{R})$ for some dimension $r=r(X, \ell)$ depending only on $X$ and~$\ell$. 
\end{theorem}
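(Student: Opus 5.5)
The plan is to deduce Theorem~\ref{reflective-2} from Theorem~\ref{unipotent-free-1}, using the isometric involutions $\sigma_k$ of $X$ with $\mathrm{Fix}(\sigma_k)=Y_k$.

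\textbf{Setup.} Set $\mathsf{G}=\mathrm{Isom}(X)$ and fix a faithful real-algebraic representation $\iota:\mathsf{G}\to\mathsf{GL}_N(\mathbb{R})$ with $N=N(X)$. (One can pass to the finite-index identity component and induce, or take the adjoint representation, which is faithful up to finite issues handled by a direct sum.) For each $k$, conjugation $c_k(g)=\sigma_k g\sigma_k$ is a continuous automorphism of $\mathsf{G}$. We then consider
\[
\phi_k^1=\phi_k^2=\iota|_\Gamma,\qquad \psi_k^1=\psi_k^2=\iota\circ c_k|_\Gamma .
\]
The map $\iota\circ c_k$ is the representation $g\mapsto \iota(\sigma_k)\iota(g)\iota(\sigma_k)^{-1}$, so $\phi_k(\gamma)^{-1}\psi_k(\gamma)=\iota(\gamma^{-1}\sigma_k\gamma\sigma_k)$.

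\textbf{First hypothesis of Theorem~\ref{unipotent-free-1}.} We need all $\phi_k,\psi_{k'}$ to agree on $\Lambda$. An element $\gamma$ stabilizing $Y_k$ commutes with $\sigma_k$, because $\gamma\sigma_k\gamma^{-1}$ is an involution fixing exactly $\gamma Y_k=Y_k$. The geodesic symmetry at $Y_k$ is determined by $Y_k$: an isometry fixing $Y_k$ pointwise is determined by its differential on the normal space, and reflectivity makes $\sigma_k$ the unique involution acting as $-1$ there. Hence $c_k=\mathrm{id}$ on $\Lambda_k\supset\Lambda$, so every $\psi_k$ equals $\iota$ on $\Lambda$.

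\textbf{Second hypothesis (the main step).} Suppose $\gamma\in\Gamma$ makes $u_k:=\gamma^{-1}\sigma_k\gamma\sigma_k$ unipotent for all $k$. We want $\gamma\in\Lambda$, i.e.\ $\gamma$ preserves each $Y_k$. The element $\sigma_k':=\gamma^{-1}\sigma_k\gamma$ is the reflection in $Y_k':=\gamma^{-1}Y_k$. The product of two reflections $\sigma_k'\sigma_k$ has displacement $2\,d(\text{points})$ along common perpendiculars, so I expect to show:
\begin{itemize}
\item if $d(Y_k,Y_k')>0$ is realized, then $\sigma_k'\sigma_k$ translates along an orthogeodesic and has a nontrivial hyperbolic part, so it is not unipotent;
\item a unipotent product forces $Y_k$ and $Y_k'$ to intersect (elliptic and unipotent means trivial) or to be asymptotic with $d(Y_k,Y_k')=0$ not realized.
\end{itemize}
This is where cocompactness enters. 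The projections $\Lambda_k\backslash Y_k$ and $\gamma$-translates lie in the compact set $\Gamma$-image, and the function $y\mapsto d(y,Y_k')$ on $Y_k$ is $\Lambda_k\cap\gamma^{-1}\Lambda_k\gamma$-invariant. I expect to need a compactness argument to show that an infimum $0$ that is not attained is impossible for a parabolic. More simply, unipotent $u_k\in\iota(\Gamma)$ with $\Gamma$ discrete, combined with $u_k$ lying in the group generated by two cocompact-stabilizer reflections, should force $u_k=1$. Indeed, $u_k$ normalizes nothing noncompact, and the intersection case gives $u_k$ elliptic, so $u_k=1$.

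From $u_k=1$ we get $\sigma_k'=\sigma_k$, so $Y_k'=Y_k$ and $\gamma\in\Lambda_k$ for all $k$, which gives $\gamma\in\Lambda$. I expect this non-unipotence claim to be the main obstacle. My first attempt would use the Jordan decomposition of $\sigma_k'\sigma_k$ and the fact that its hyperbolic and elliptic parts are again products of reflections in parallel copies. If the unipotent part were nontrivial, $Y_k$ and $Y_k'$ would be asymptotic without meeting, and cocompactness of $\Lambda_k$ on $Y_k$ yields elements of $\Lambda_k$ moving the shrinking orthogeodesics back to a compact set, contradicting discreteness of $\Gamma$.

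\textbf{Conclusion.} Theorem~\ref{unipotent-free-1} gives a faithful representation into $\mathsf{GL}_{r'}(\mathbb{R}[s,t])$, with $r'$ depending only on $N(X)$ and $\ell$. Since $\mathbb{R}[s,t]$ embeds in $\mathbb{C}$ (choose algebraically independent transcendentals over a countable field of definition for the finitely many relevant entries, or over $\mathbb{R}$ using cardinality), restriction of scalars yields an embedding into $\mathsf{GL}_{2r'}(\mathbb{R})$, so $r=2r'$.
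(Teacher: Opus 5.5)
Your proposal follows essentially the paper's route: Theorem~\ref{unipotent-free-1} with $\phi_k=\iota$ (the paper takes $\iota=\mathrm{Ad}$, which is already faithful on $\mathrm{Isom}(X)$) and $\psi_k=\iota\circ c_{\sigma_k}$. In both arguments, non-unipotence of $\gamma^{-1}\sigma_k\gamma\sigma_k$ for $\gamma\notin\Lambda_k$ comes from a nontrivial-elliptic case when $Y_k\cap\gamma^{-1}Y_k\neq\emptyset$, and otherwise from translation along an orthogeodesic supplied by cocompactness of $\Lambda_k$ and discreteness of $\Gamma$; your local-finiteness argument works verbatim when the infimum is positive, so it shows the distance is always attained, and it is more explicit than the paper's. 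Drop the ``more simply'' aside, which is wrong as stated: $\sigma_k$ need not lie in $\Gamma$, so $u_k$ need not lie in $\iota(\Gamma)$.
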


\begin{proof}
Denote by $\mathsf{G} = \mathrm{Isom}(X)$, by $\mathfrak{g}$ the Lie algebra of $\mathsf{G}$, and by $\mathrm{Ad}: \mathsf{G} \rightarrow \mathsf{GL}(\mathfrak{g})$ the adjoint representation of $\mathsf{G}$. Note that $\mathrm{Ad}$ is faithful since the centralizer in $\mathsf{G}$ of the identity component of $\mathsf{G}$ is trivial. For $k=1, \ldots, \ell$, denote by $\sigma_k \in \mathsf{G}$ the isometric involution of $X$ whose fixed point set is $Y_k$. 

Now fix $k \in \{1, \ldots, \ell\}$ and $\gamma \in \Gamma$. We claim that $\gamma \in \Lambda_k$ if $\mathrm{Ad}(\gamma^{-1} \sigma_k \gamma \sigma_k)$ is unipotent. Indeed, suppose $\gamma \notin \Lambda_k$. In the case that $Y_k \cap \gamma^{-1} Y_k \neq \emptyset$, we have that $\gamma^{-1} \sigma_k \gamma \sigma_k$ is a nontrivial isometry of $X$ fixing $Y_k \cap \gamma^{-1} Y_k$ pointwise. In the case that $Y_k \cap \gamma^{-1} Y_k = \emptyset$, we have that the distance between $Y_k$ and $\gamma^{-1} Y_k$ is bounded below by the length $\delta > 0$ of a shortest (positive-length) orthogeodesic to $\Lambda_k \backslash Y_k$ in $\Gamma \backslash X$. Since $\sigma_k$ inverts every geodesic line in $X$ orthogonal to~$Y_k$, we have in this case that the isometry $\gamma^{-1}\sigma_k\gamma\sigma_k$ preserves each geodesic line $L$ in~$X$ orthogonal to both $Y_k$ and $\gamma^{-1} Y_k$ and translates by a distance of $\geq \delta$ along~$L$. Thus, in either case, we have that $\gamma^{-1} \sigma_k \gamma \sigma_k$ is a nontrivial semisimple isometry of~$X$, and hence that $\mathrm{Ad}(\gamma^{-1} \sigma_k \gamma \sigma_k)$ is a nontrivial semisimple matrix, so that $\mathrm{Ad}(\gamma^{-1} \sigma_k \gamma \sigma_k)$ in particular fails to be unipotent. 

To conclude, we may now apply Theorem~\ref{unipotent-free-1} with $\Gamma_1 = \Gamma_2 = \Gamma$, with $\phi_k^1 = \phi_k^2 = \mathrm{Ad}$, with $\psi_k^1 = \psi_k^2$ the representation given by precomposing $\mathrm{Ad}$ with conjugation by $\sigma_k$, and with $\mathsf{R}$ the entry field of the subgroup of $\mathsf{GL}(\mathfrak{g})$ generated by $\mathrm{Ad}(\Gamma), \mathrm{Ad}(\sigma_1 \Gamma \sigma_1), \ldots, \mathrm{Ad}(\sigma_\ell\Gamma\sigma_\ell)$. We may in particular take
\[
r(X, \ell) = \ell^2\left(\binom{2\dim_{\mathbb{R}}\mathfrak{g}+2}{\dim_{\mathbb{R}}\mathfrak{g}+1}-1\right)^2+1. \qedhere
\] \end{proof}

\begin{Exmp}
\normalfont{
Let $\mathsf{G} = \mathsf{SL}_2(\mathbb{R}) \times \mathsf{SL}_2(\mathbb{R})$, and let $\Gamma < \mathsf{G}$ be any irreducible noncocompact lattice. Then it follows from Margulis's arithmeticity theorem, together with the fact that any quadratic field embeds in some quaternion division algebra over $\mathbb{Q}$, that there are conjugates~$\mathsf{H}$ of the diagonal subgroup $\{(g,g)\}_{g \in \mathsf{SL}_2(\mathbb{R})} < \mathsf{G}$ such that $\Lambda:= \Gamma \cap \mathsf{H}$ is cocompact in $\mathsf{H}$. One then deduces from Theorem~\ref{reflective} that any such double $\Gamma \ast_\Lambda \Gamma$ is linear. These are examples to which Theorem~\ref{compact-1} does not apply, since it again follows from Margulis superrigidity that in these cases the image of $\Lambda$ under no faithful finite-dimensional real representation of $\Gamma$ is precompact.
}
\end{Exmp}

\begin{Exmp}
\normalfont{
Let $M$ be an arithmetic $\mathbb{K}$-hyperbolic manifold of simplest type and of $\mathbb{K}$-dimension $d \geq 2$, where $\mathbb{K} = \mathbb{R},$ $\mathbb{C}$, or $\mathbb{H}$. Suppose also that the adjoint trace field of (the fundamental group of) $M$ is quadratic, so that $M$ is in particular compact. Then, as argued in~\cite[Cor.~4.5]{arXiv:2105.06897} in the case $\mathbb{K} = \mathbb{R}$, the manifold~$M$ is commensurable via restriction of scalars to an immersed totally geodesic $\mathbb{K}$-submanifold $\Lambda \backslash Y$ of a noncompact arithmetic $\mathbb{K}$-hyperbolic manifold $\Gamma \backslash X$, where~$X$ is of $\mathbb{K}$-dimension $2d+1$. It then follows from Theorem~\ref{reflective} that the double $\Gamma \ast_\Lambda \Gamma$ is linear. For $\mathbb{K} = \mathbb{H}$, this again gives examples to which Theorem~\ref{compact-1} does not apply, this time by Corlette superrigidity \cite{Corlette}.
}
\end{Exmp}

\begin{Exmp}
\normalfont{
Let $f_1$ and $f_2$ be the $\mathbb{Q}$-anisotropic rational quadratic forms $x_1^2+x_2^2-7x_3^2$ and $17x_1^2+x_2^2-7x_3^2$, respectively. (The $f_i$ are indeed $\mathbb{Q}_2$-anisotropic again since $-7$ and $17$ are both squares in $\mathbb{Q}_2$ and the stufe of $\mathbb{Q}_2$ is $4$.) Then $(\mathsf{SL}_3(\mathbb{R}), \mathsf{SO}(f_i; \mathbb{R}))$ is an affine symmetric pair and $\mathsf{SO}(f_i; \mathbb{Z})$ is cocompact in $\mathsf{SO}(f_i; \mathbb{R})$ for $i=1,2$. It then follows from Theorem~\ref{reflective-2} that the double $\Gamma \ast_\Lambda \Gamma$ is linear, where $\Gamma := \mathsf{SL}_3(\mathbb{Z})$, and where $\Lambda$ is the virtually infinite cyclic subgroup
\[
\Lambda := \mathsf{SO}(f_1; \mathbb{Z}) \cap \mathsf{SO}(f_2; \mathbb{Z}) = \mathsf{S}(\{\pm 1\} \times \mathsf{O}(f; \mathbb{Z}));
\]
here $f$ is the quadratic form in two variables given by $x^2-7y^2$. We include this example because the pair $(\mathsf{SL}_3(\mathbb{R}),  \mathsf{S}(\{\pm 1\} \times \mathsf{O}(f; \mathbb{R}))$ is {\em not} affine symmetric.
}
\end{Exmp}

\section{Doubling along noncocompact lattices} \label{nonlinear-amalgams}

In this section, we use superrigidity to prove Theorem \ref{negative}.

\begin{proof}[Proof~of~Theorem~\ref{negative}]
If $\Lambda$ is not of finite covolume in $\overline{\Lambda}^{\mathrm{Zar}}$, then $\Gamma \ast_\Lambda \Gamma$ is not linear by \cite[Cor.~5.3]{TT-IMRN}. We may thus assume that $\Lambda$ is a noncocompact lattice in $\overline{\Lambda}^{\mathrm{Zar}}$, so that $\Gamma$ is also not cocompact in~$\mathsf{G}$. By Margulis's arithmeticity theorem \cite{Mar} (see also \cite[Cor.~5.3.2]{MR3307755}), there is then an algebraically connected and simply connected $\mathbb{Q}$-group ${\bf G}$ and a morphism $\phi: {\bf G}(\mathbb{R}) \rightarrow \mathsf{G}$ with finite kernel such that $\Gamma$ is commensurable with $\phi({\bf G}(\mathbb{Z}))$. Let $\Gamma'$ be a finite-index subgroup of ${\bf G}(\mathbb{Z})$ such that $\phi$ is injective on $\Gamma'$ and $\phi(\Gamma') \subset \Gamma$, and let $\Lambda' = \phi^{-1}(\Lambda) \cap \Gamma'$. Then it suffices to show that $\Gamma' \ast_{\Lambda'} \Gamma'$ is not linear. 

By our assumption on $\Lambda$, we have that $\Lambda'$ is of finite index in ${\bf H}(\mathbb{Z})$ for some isotropic $\mathbb{Q}$-subgroup ${\bf H} < {\bf G}$ of positive codimension. Let $g$ be a nontrivial element of ${\bf T}(\mathbb{R})^\circ$ for some positive-dimensional $\mathbb{Q}$-split torus ${\bf T} < {\bf H}$, and denote by $U(g^{\pm 1}) < {\bf G}(\mathbb{R})$ the (stable) horospherical subgroup of $g^{\pm 1}$. Since ${\bf G}(\mathbb{R})^\circ$ is generated by $U(g)$ and $U(g^{-1})$ (see \cite[Thm.~I.2.3.1]{Mar}), we must have that ${\bf H}(\mathbb{R}) \cap U(g^\epsilon)$ is of positive codimension in $U(g^\epsilon)$ for some $\epsilon \in \{\pm 1\}$; assume henceforth without loss of generality that $\epsilon = 1$. On the other hand, since $U(g)$ is defined over~$\mathbb{Q}$, we have that $\Gamma' \cap U(g)$ is a lattice in $U(g)$. Thus, there is some $u \in \Gamma' \cap U(g)$ such that $\langle u \rangle \cap {\bf H}(\mathbb{R}) = \{1\}$.

Now suppose one has a faithful representation $\rho: \Gamma' \ast_{\Lambda'} \Gamma' \rightarrow \mathsf{GL}_d(\mathbb{F})$ for some field $\mathbb{F}$ and $d \in \mathbb{N}$. Since $\Gamma'$ is itself not linear over any field of positive characteristic by our assumption on~$\mathsf{G}$, we must have that $\mathrm{char}(\mathbb{F}) = 0$.  Since $\Gamma'$ is finitely generated, and since $\mathsf{GL}_d(\mathbb{C})$ embeds into $\mathsf{GL}_{2d}(\mathbb{R})$, we may assume $\mathbb{F}=\mathbb{R}$. By Margulis superrigidity \cite{Mar} in the case $\textup{rk}_{\mathbb{R}}(\mathsf{G})\geq 2$, and by  Corlette \cite{Corlette} and Gromov--Schoen \cite{GS} superrigidity in the case  $\textup{rk}_{\mathbb{R}}(\mathsf{G})=1$, there exist finite-index subgroups $\Gamma_i<\Gamma'$ and continuous representations \hbox{$\overline{\rho}_i: {\bf G}(\mathbb{R}) \rightarrow \mathsf{GL}_d(\mathbb{R})$ such that} \begin{align*} \overline{\rho}_i(\gamma)&=\rho(\gamma),\ \gamma\in \Gamma_i,\\ \overline{\rho}_1(h)&=\overline{\rho}_2(h), \ h\in \Lambda' \cap \Gamma_1\cap  \Gamma_2.\end{align*} Since ${\bf G}$ is algebraically simply connected, by \cite[Thm. 3.3.4]{On-Vin} the representations $\rho_1,\rho_2$ are $\mathbb{R}$-algebraic. Thus, there is a finite-index subgroup $\mathsf{H}< {\bf H}(\mathbb{R})$ containing $\Lambda' \cap\Gamma_1\cap \Gamma_2$ with $\overline{\rho}_1(h)=\overline{\rho}_2(h)$ for every $h\in \mathsf{H}$. In particular, we have $\overline{\rho}_1(g)=\overline{\rho}_2(g)$. Denoting by $\mathcal{U}(g)$ the horospherical subgroup of $\overline{\rho}_1(g)=\overline{\rho}_2(g)$ in $\mathsf{GL}_d(\mathbb{R})$, we then have that $\overline{\rho}_i(u^m)\in \mathcal{U}(g)$ for $i=1,2$ and for $m>0$ such that $u^m \in \Lambda' \cap \Gamma_1 \cap \Gamma_2$. Now since $\rho$ was assumed faithful, we have that $\langle \overline{\rho}_1(u^m),\overline{\rho}_2(u^m) \rangle < \mathcal{U}(g)$ decomposes as the free product $\langle \overline{\rho}_1(u^m) \rangle \ast \langle \overline{\rho}_2(u^m) \rangle$. But this is absurd since $\mathcal{U}(g)$ is a nilpotent group.
\end{proof}

\begin{remark}
\normalfont{Note that if $\Lambda$ is any finite-index subgroup of a finitely generated linear group $\Gamma$, then, since amalgams of finite groups are virtually free and hence linear, the amalgam $\Gamma\ast_\Lambda \Gamma$ is also linear.}
\end{remark}

\begin{remark}
\normalfont{What follows is a straightforward demonstration of the failure of Theorem~\ref{negative} for certain pairs $(\Gamma, \Lambda)$, where $\Gamma$ is a lattice in $\mathsf{O}(n,1)$, $3 \leq n \leq 8$, and $\Lambda = \Gamma \cap \mathsf{H}$ is of finite covolume in some conjugate $\mathsf{H}$ of $\mathsf{O}(n-1,1)$ in $\mathsf{O}(n,1)$. Indeed, for any such $n$, there is a noncompact finite-volume hyperbolic polyhedron $Q$ of dimension $n$; see \cite{MR2130566}. Let $P$ be the polyhedron obtained from~$Q$ by doubling $P$ along some noncompact codimension-$1$ face $F$ of~$Q$. Let $\Gamma < \mathsf{O}(n,1)$ be the group generated by the reflections in the walls of $P$, and let $\Lambda < \Gamma$ be the group generated by the reflections in those walls of $P$ that are orthogonal to $F$. Then the pair $(\Gamma, \Lambda)$ is as above, but the double $\Gamma \ast_\Lambda \Gamma$ abstractly remains a (right-angled) Coxeter group and is thus linear by classical work of Tits \cite{MR240238} and Vinberg \cite{MR302779}. 
}
\end{remark}

\begin{corollary}\label{negative-examples}
Let $\mathsf{G}$ and $\Gamma < \mathsf{G}$ be as in Theorem \ref{negative}. If $\Gamma$ is not cocompact in $\mathsf{G}$, then $\Gamma$ contains a subgroup $\Lambda$ such that the double $\Gamma \ast_\Lambda \Gamma$ is residually finite but not linear.  
\end{corollary}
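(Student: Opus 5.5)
The plan is to exhibit, inside a noncocompact $\Gamma$, an infinite-index subgroup $\Lambda$ that is not cocompact in its Zariski-closure. We will take $\Lambda$ to be the intersection of $\Gamma$ with an algebraic subgroup of $\mathsf{G}$. Then Theorem~\ref{negative} gives nonlinearity of $\Gamma \ast_\Lambda \Gamma$. Residual finiteness of the double reduces, by the criterion recalled in the introduction, to separability of $\Lambda$ in $\Gamma$.

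\textbf{Step 1 (arithmetic model).} As in the proof of Theorem~\ref{negative}, since $\Gamma$ is a noncocompact irreducible lattice, Margulis arithmeticity (together with Corlette and Gromov--Schoen in rank one) provides a simply connected $\mathbb{Q}$-group ${\bf G}$ and a morphism $\phi:{\bf G}(\mathbb{R})\to\mathsf{G}$ with finite kernel such that $\Gamma$ is commensurable with $\phi({\bf G}(\mathbb{Z}))$. Noncocompactness and Godement's criterion make ${\bf G}$ $\mathbb{Q}$-isotropic. Hence ${\bf G}$ contains a nontrivial $\mathbb{Q}$-split torus ${\bf T}$ and a nontrivial unipotent $\mathbb{Q}$-subgroup ${\bf U}$, for instance the unipotent radical of a proper $\mathbb{Q}$-parabolic.

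\textbf{Step 2 (choice of $\Lambda$).} Pick a nontrivial unipotent element $u\in\Gamma$; it exists since $\Gamma$ meets $\phi({\bf U}(\mathbb{R}))$ in a lattice. Set
\[
\Lambda:=\Gamma\cap \overline{\langle u\rangle}^{\mathrm{Zar}}.
\]
Here $\overline{\langle u\rangle}^{\mathrm{Zar}}$ is the one-parameter unipotent group $\{\exp(s\log u)\}$, an algebraic subgroup. The intersection $\Lambda$ is a discrete subgroup of a copy of $\mathbb{R}$ containing $u$, hence infinite cyclic, and its Zariski-closure is that same copy of $\mathbb{R}$. So $\Lambda$ is not cocompact in its Zariski-closure, and it has infinite index in $\Gamma$ because $\Gamma$ is not virtually cyclic. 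Alternatively one may take $\Gamma$ intersected with the $\mathbb{R}$-points of ${\bf U}$ or of a proper isotropic reductive $\mathbb{Q}$-subgroup. Then $\Lambda$ is a lattice there but still noncocompact, which fits the hypothesis ``not cocompact in the Zariski-closure'' exactly.

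\textbf{Step 3 (separability).} The group $\Gamma$ is finitely generated, being a lattice in a semisimple group, and it lies in $\mathsf{GL}_n$ over a finitely generated domain. Also $\Lambda$ is the intersection of $\Gamma$ with an algebraic subgroup. By the separability result cited in the introduction \cite{MR1769939, MR898729}, $\Lambda$ is separable in $\Gamma$, so $\Gamma\ast_\Lambda\Gamma$ is residually finite. Some care is needed because $\mathsf{G}$ is merely a real algebraic group. To handle this, I would work in $\mathrm{Ad}(\Gamma)$, whose entries lie in a finitely generated ring, using that $\Gamma$ is finitely generated. The kernel of $\mathrm{Ad}$ on $\Gamma$ is central and finite, and it can be absorbed into $\Lambda$ or avoided by passing to a finite-index subgroup.

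\textbf{Step 4 (nonlinearity).} Apply Theorem~\ref{negative} to the pair $(\Gamma,\Lambda)$. The main obstacle I expect is making precise that the chosen $\Lambda$ is not cocompact in $\overline{\Lambda}^{\mathrm{Zar}}$ when the Zariski-closure is taken in $\mathsf{G}$ rather than in $\mathrm{Ad}(\mathsf{G})$. For the unipotent cyclic choice this is transparent, since unipotent Zariski-closures are the same in either model. This is why I would commit to the choice $\Lambda=\Gamma\cap\overline{\langle u\rangle}^{\mathrm{Zar}}$.
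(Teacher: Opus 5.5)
There is a genuine gap in Step~2, and it defeats the choice you commit to. The group $\overline{\langle u\rangle}^{\mathrm{Zar}}=\{\exp(s\log u)\}_{s\in\mathbb{R}}$ is a copy of $\mathbb{R}$. The subgroup $\Lambda=\Gamma\cap\overline{\langle u\rangle}^{\mathrm{Zar}}$ is infinite cyclic and discrete in it, so it is a lattice in $\mathbb{R}$. Since $\mathbb{R}/\mathbb{Z}$ is a circle, $\Lambda$ \emph{is} cocompact in its Zariski-closure, and Theorem~\ref{negative} does not apply. Nor does \cite[Cor.~5.3]{TT-IMRN}, since the covolume is finite.

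Your fallback $\Gamma\cap{\bf U}(\mathbb{R})$ fails for the same reason: a discrete subgroup of a unipotent group is a cocompact lattice in its Zariski-closure (Mal'cev). This is not a technicality about which model you take the Zariski-closure in. The proof of Theorem~\ref{negative} needs a nontrivial $\mathbb{Q}$-split torus ${\bf T}$ inside the $\mathbb{Q}$-group ${\bf H}$ in which $\Lambda$ is arithmetic. That torus produces the element $g$ whose horospherical subgroup traps both $\overline{\rho}_1(u^m)$ and $\overline{\rho}_2(u^m)$, and a unipotent $\Lambda$ provides no such torus. So your Step~4 has it backwards: for the unipotent cyclic choice, what is transparent is that $\Lambda$ \emph{is} cocompact in its Zariski-closure.

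The missing idea is to enlarge $\langle u\rangle$ to a \emph{semisimple} $\mathbb{Q}$-isotropic subgroup. The paper does this with Jacobson--Morozov over $\mathbb{Q}$. The unipotent $u\in{\bf G}(\mathbb{Z})$ supplied by Godement's criterion lies in a $\mathbb{Q}$-subgroup ${\bf H}<{\bf G}$ with ${\bf H}(\mathbb{R})$ locally isomorphic to $\mathsf{SL}_2(\mathbb{R})$. One then sets $\mathsf{H}:=\overline{\phi({\bf H}(\mathbb{R}))}^{\mathrm{Zar}}$ and $\Lambda:=\Gamma\cap\mathsf{H}$, and checks three things:
\begin{enumerate}
\item Separability, and hence residual finiteness of the double, follows as in your Step~3, via \cite[Lemme~principal]{MR1769939}.
\item $\Lambda$ is commensurable with $\phi({\bf H}(\mathbb{Z}))$. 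It is therefore a lattice in $\mathsf{H}$, Zariski-dense there by Borel density, and of infinite index in $\Gamma$.
\item Since $\Lambda$ contains the nontrivial unipotent $\phi(u)$, it is a \emph{noncocompact} lattice in its Zariski-closure, so Theorem~\ref{negative} applies.
\end{enumerate}
Your parenthetical about a ``proper isotropic reductive $\mathbb{Q}$-subgroup'' points in this direction, but as stated it is also insufficient. The isotropy must lie in the semisimple part: for a $\mathbb{Q}$-split torus ${\bf H}$, the intersection $\Gamma\cap\phi({\bf H}(\mathbb{R}))$ is finite. Your Steps~1 and~3 are fine.
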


\begin{proof}
As in the proof of Theorem~\ref{negative}, it follows from Margulis's arithmeticity theorem that there is a $\mathbb{Q}$-group ${\bf G}$ and a local isomorphism $\phi: {\bf G}(\mathbb{R}) \rightarrow \mathsf{G}$ such that $\Gamma$ is commensurable with $\phi({\bf G}(\mathbb{Z}))$. Since $\Gamma$ is not cocompact in $\mathsf{G}$, Godement's criterion (see, for instance, \cite[Prop.~5.3.1]{MR3307755}) guarantees the existence of a unipotent element $u \in {\bf G}(\mathbb{Z})$. It now follows from the Jacobson--Morosov lemma (over $\mathbb{Q}$; see \cite[Ch.~3, Thm.~17]{MR559927}) that there is a $\mathbb{Q}$-subgroup ${\bf H} < {\bf G}$ such that ${\bf H}(\mathbb{R})$ is locally isomorphic to $\mathsf{SL}_2(\mathbb{R})$ and $u \in {\bf H}(\mathbb{Z})$. Let $\mathsf{H}:=\overline{\phi({\bf H}(\mathbb{R}))}^\mathrm{Zar} < \mathsf{G}$, and $\Lambda := \Gamma \cap \mathsf{H}$. Then~$\Lambda$ is separable in $\Gamma$ by \cite[Lemme~principal]{MR1769939}, so that the double $\Gamma \ast_\Lambda \Gamma$ is residually finite \cite[Prop.~1]{zbMATH03182231}. On the other hand, since $\mathsf{H}$ remains locally isomorphic to $\mathsf{SL}_2(\mathbb{R})$ and $\Lambda$ contains the unipotent element $\phi(u)$, we have that $\Lambda$ is not cocompact in $\Gamma$, so that the double $\Gamma \ast_\Lambda \Gamma$ is not linear by Theorem~\ref{negative}.
\end{proof}

\begin{remark}
\normalfont{In the case that the real rank of $\mathsf{G}$ is $\geq 2$, Corollary~\ref{negative-examples} is implicit in the work of Tholozan--Tsouvalas \cite{TT-IMRN}. Indeed, as observed in \cite{TT-IMRN}, it follows from work of Prasad--Rapinchuk~\cite{MR1960120} that in this case $\Gamma$ contains semisimple elements $\gamma$ such that $\overline{\langle \gamma \rangle}^\mathrm{Zar}$
} is of dimension $\geq 2$. One then concludes from \cite[Cor.~5.3]{TT-IMRN} that the double $\Gamma \ast_{\langle \gamma \rangle} \Gamma$ is not linear. However, any such double is residually finite, essentially because arithmetic subgroups of tori satisfy the congruence subgroup property \cite{MR44570}; see \cite[\S3]{MR2100678}.
\end{remark}

\bibliographystyle{siam}
\bibliography{biblio.bib}

@article {MR2115010,
    AUTHOR = {Drutu, Cornelia and Sapir, Mark},
     TITLE = {Non-linear residually finite groups},
   JOURNAL = {J. Algebra},
  FJOURNAL = {Journal of Algebra},
    VOLUME = {284},
      YEAR = {2005},
    NUMBER = {1},
     PAGES = {174--178},
      ISSN = {0021-8693,1090-266X},
   MRCLASS = {20E26},
  MRNUMBER = {2115010},
MRREVIEWER = {Avinoam\ Mann},
       DOI = {10.1016/j.jalgebra.2004.06.025},
       URL = {https://doi.org/10.1016/j.jalgebra.2004.06.025},
}

@Article{Corlette,
 Author = {Corlette, Kevin},
 Title = {Archimedean superrigidity and hyperbolic geometry},
 FJournal = {Annals of Mathematics. Second Series},
 Journal = {Ann. Math. (2)},
 ISSN = {0003-486X},
 Volume = {135},
 Number = {1},
 Pages = {165--182},
 Year = {1992},
 Language = {English},
 DOI = {10.2307/2946567},
 zbMATH = {55728},
 Zbl = {0768.53025}
}

@article {MR2029029,
    AUTHOR = {Erschler, Anna},
     TITLE = {Not residually finite groups of intermediate growth,
              commensurability and non-geometricity},
   JOURNAL = {J. Algebra},
  FJOURNAL = {Journal of Algebra},
    VOLUME = {272},
      YEAR = {2004},
    NUMBER = {1},
     PAGES = {154--172},
      ISSN = {0021-8693,1090-266X},
   MRCLASS = {20F05 (20F50 20F65)},
  MRNUMBER = {2029029},
MRREVIEWER = {Stephen\ P.\ Humphries},
       DOI = {10.1016/j.jalgebra.2002.11.005},
       URL = {https://doi.org/10.1016/j.jalgebra.2002.11.005},
}

@article {MR286898,
    AUTHOR = {Tits, J.},
     TITLE = {Free subgroups in linear groups},
   JOURNAL = {J. Algebra},
  FJOURNAL = {Journal of Algebra},
    VOLUME = {20},
      YEAR = {1972},
     PAGES = {250--270},
      ISSN = {0021-8693},
   MRCLASS = {20.75},
  MRNUMBER = {286898},
MRREVIEWER = {B.\ A. F. Wehrfritz},
       DOI = {10.1016/0021-8693(72)90058-0},
       URL = {https://doi.org/10.1016/0021-8693(72)90058-0},
}

@Article{Shalen,
 Author = {Shalen, Peter B.},
 Title = {Linear representations of certain amalgamated products},
 FJournal = {Journal of Pure and Applied Algebra},
 Journal = {J. Pure Appl. Algebra},
 ISSN = {0022-4049},
 Volume = {15},
 Pages = {187--197},
 Year = {1979},
 Language = {English},
 DOI = {10.1016/0022-4049(79)90033-1},
 zbMATH = {3622001},
 Zbl = {0401.20024}
}

@article {MR2221137,
    AUTHOR = {Labourie, Fran\c cois},
     TITLE = {Anosov flows, surface groups and curves in projective space},
   JOURNAL = {Invent. Math.},
  FJOURNAL = {Inventiones Mathematicae},
    VOLUME = {165},
      YEAR = {2006},
    NUMBER = {1},
     PAGES = {51--114},
      ISSN = {0020-9910,1432-1297},
   MRCLASS = {20F65 (37D20 37F30)},
  MRNUMBER = {2221137},
MRREVIEWER = {Richard\ Kenyon},
       DOI = {10.1007/s00222-005-0487-3},
       URL = {https://doi.org/10.1007/s00222-005-0487-3},
}

@book {MR607504,
    AUTHOR = {Serre, Jean-Pierre},
     TITLE = {Trees},
      NOTE = {Translated from the French by John Stillwell},
 PUBLISHER = {Springer-Verlag, Berlin-New York},
      YEAR = {1980},
     PAGES = {ix+142},
      ISBN = {3-540-10103-9},
   MRCLASS = {20H10 (05C05 22E50)},
  MRNUMBER = {607504},
}

@article{BF,
  title={A combination theorem for negatively curved groups},
  author={Bestvina, Mladen and Feighn, Mark},
  journal={Journal of Differential Geometry},
  volume={35},
  number={1},
  pages={85--101},
  year={1992},
  publisher={Lehigh University}
}

@article {MR2981818,
    AUTHOR = {Guichard, Olivier and Wienhard, Anna},
     TITLE = {Anosov representations: domains of discontinuity and
              applications},
   JOURNAL = {Invent. Math.},
  FJOURNAL = {Inventiones Mathematicae},
    VOLUME = {190},
      YEAR = {2012},
    NUMBER = {2},
     PAGES = {357--438},
      ISSN = {0020-9910,1432-1297},
   MRCLASS = {22F30 (32G15 53C30 53D25)},
  MRNUMBER = {2981818},
MRREVIEWER = {Pablo\ Su\'arez-Serrato},
       DOI = {10.1007/s00222-012-0382-7},
       URL = {https://doi.org/10.1007/s00222-012-0382-7},
}

@misc{arXiv:2510.21334,
 author = {Canary, Richard D.},
 title = {Kleinian viewpoints on higher rank worlds},
 year = {2025},
 howpublished = {Preprint, {arXiv}:2510.21334 [math.{GT}] (2025)},
 url = {https://arxiv.org/abs/2510.21334},
 arXiv = {arXiv:2510.21334}
}

@misc{arXiv:2504.21802,
 author = {Dey, Subhadip and Tsouvalas, Konstantinos},
 title = {Anosov representations of amalgams},
 year = {2025},
 howpublished = {Preprint, {arXiv}:2504.21802 [math.{GR}] (2025)},
 url = {https://arxiv.org/abs/2504.21802},
 arXiv = {arXiv:2504.21802}
}

@article {MR141672,
    AUTHOR = {Mostow, G. D. and Tamagawa, T.},
     TITLE = {On the compactness of arithmetically defined homogeneous
              spaces},
   JOURNAL = {Ann. of Math. (2)},
  FJOURNAL = {Annals of Mathematics. Second Series},
    VOLUME = {76},
      YEAR = {1962},
     PAGES = {446--463},
      ISSN = {0003-486X},
   MRCLASS = {22.55 (20.65)},
  MRNUMBER = {141672},
MRREVIEWER = {T.\ Ono},
       DOI = {10.2307/1970368},
       URL = {https://doi.org/10.2307/1970368},
}

@article {MR147566,
    AUTHOR = {Borel, Armand and Harish-Chandra},
     TITLE = {Arithmetic subgroups of algebraic groups},
   JOURNAL = {Ann. of Math. (2)},
  FJOURNAL = {Annals of Mathematics. Second Series},
    VOLUME = {75},
      YEAR = {1962},
     PAGES = {485--535},
      ISSN = {0003-486X},
   MRCLASS = {20.65 (14.50)},
  MRNUMBER = {147566},
MRREVIEWER = {P.\ Cartier},
       DOI = {10.2307/1970210},
       URL = {https://doi.org/10.2307/1970210},
}

@article {MR4012341,
    AUTHOR = {Bochi, Jairo and Potrie, Rafael and Sambarino, Andr\'es},
     TITLE = {Anosov representations and dominated splittings},
   JOURNAL = {J. Eur. Math. Soc. (JEMS)},
  FJOURNAL = {Journal of the European Mathematical Society (JEMS)},
    VOLUME = {21},
      YEAR = {2019},
    NUMBER = {11},
     PAGES = {3343--3414},
      ISSN = {1435-9855,1435-9863},
   MRCLASS = {22E40 (20F67 37B99 37D30 53C35)},
  MRNUMBER = {4012341},
MRREVIEWER = {Alejandro\ Ucan-Puc},
       DOI = {10.4171/JEMS/905},
       URL = {https://doi.org/10.4171/JEMS/905},
}

@Article{TT-IMRN,
 Author = {Tholozan, Nicolas and Tsouvalas, Konstantinos},
 Title = {Linearity and indiscreteness of amalgamated products of hyperbolic groups},
 FJournal = {IMRN. International Mathematics Research Notices},
 Journal = {Int. Math. Res. Not.},
 ISSN = {1073-7928},
 Volume = {2023},
 Number = {24},
 Pages = {21290--21319},
 Year = {2023},
 Language = {English},
 DOI = {10.1093/imrn/rnac302},
 zbMATH = {7805883}
}

@book{Mar,
  title={Discrete subgroups of semisimple Lie groups},
  author={Margulis, Gregori A.},
  volume={17},
  year={1991},
  publisher={Springer Science \& Business Media}
}

@Article{Malcev,
Author = {Mal'cev, A. I.},
Title = {On homomorphisms onto finite groups},
Journal = {Uchen. Zap. Ivanov Gos. Ped. Inst.},
Volume = {18},
Pages = {49--60},
Year = {1956},
Language = {Russian}
}

@ARTICLE{2024arXiv240317964K,
       author = {{Kharlampovich}, Olga and {Vdovina}, Alina},
        title = "{Quantifying separability in RAAGs via representations}",
      journal = {arXiv e-prints},
         year = 2024,
        month = mar,
          eid = {arXiv:2403.17964},
        pages = {arXiv:2403.17964},
          doi = {10.48550/arXiv.2403.17964},
archivePrefix = {arXiv},
       eprint = {2403.17964},
 primaryClass = {math.GR},
       adsurl = {https://ui.adsabs.harvard.edu/abs/2024arXiv240317964K}
}

@article {MR2100678,
    AUTHOR = {McReynolds, D. B.},
     TITLE = {Peripheral separability and cusps of arithmetic hyperbolic
              orbifolds},
   JOURNAL = {Algebr. Geom. Topol.},
  FJOURNAL = {Algebraic \& Geometric Topology},
    VOLUME = {4},
      YEAR = {2004},
     PAGES = {721--755},
      ISSN = {1472-2747,1472-2739},
   MRCLASS = {57M50 (11F06 20E26 30F40 57N16 57R90)},
  MRNUMBER = {2100678},
MRREVIEWER = {Kerry\ N.\ Jones},
       DOI = {10.2140/agt.2004.4.721},
       URL = {https://doi.org/10.2140/agt.2004.4.721},
}

@article {MR2511587,
    AUTHOR = {Prasad, Gopal and Rapinchuk, Andrei S.},
     TITLE = {Weakly commensurable arithmetic groups and isospectral locally
              symmetric spaces},
   JOURNAL = {Publ. Math. Inst. Hautes \'Etudes Sci.},
  FJOURNAL = {Publications Math\'ematiques. Institut de Hautes \'Etudes
              Scientifiques},
    NUMBER = {109},
      YEAR = {2009},
     PAGES = {113--184},
      ISSN = {0073-8301,1618-1913},
   MRCLASS = {20G15 (11E72 14L35 20G30)},
  MRNUMBER = {2511587},
MRREVIEWER = {B.\ Sury},
       DOI = {10.1007/s10240-009-0019-6},
       URL = {https://doi.org/10.1007/s10240-009-0019-6},
}

@article {MR4474914,
    AUTHOR = {Emery, Vincent and Kim, Inkang},
     TITLE = {Quaternionic hyperbolic lattices of minimal covolume},
   JOURNAL = {Forum Math. Sigma},
  FJOURNAL = {Forum of Mathematics. Sigma},
    VOLUME = {10},
      YEAR = {2022},
     PAGES = {Paper No. e68, 19},
      ISSN = {2050-5094},
   MRCLASS = {22E40 (11E57 20G30 51M25)},
  MRNUMBER = {4474914},
MRREVIEWER = {Jack\ O.\ Button},
       DOI = {10.1017/fms.2022.43},
       URL = {https://doi.org/10.1017/fms.2022.43},
}

@article {MR44570,
    AUTHOR = {Chevalley, Claude},
     TITLE = {Deux th\'eor\`emes d'arithm\'etique},
   JOURNAL = {J. Math. Soc. Japan},
  FJOURNAL = {Journal of the Mathematical Society of Japan},
    VOLUME = {3},
      YEAR = {1951},
     PAGES = {36--44},
      ISSN = {0025-5645,1881-1167},
   MRCLASS = {10.0X},
  MRNUMBER = {44570},
MRREVIEWER = {G.\ Hochschild},
       DOI = {10.2969/jmsj/00310036},
       URL = {https://doi.org/10.2969/jmsj/00310036},
}

@article {MR302822,
    AUTHOR = {Prasad, Gopal and Raghunathan, M. S.},
     TITLE = {Cartan subgroups and lattices in semi-simple groups},
   JOURNAL = {Ann. of Math. (2)},
  FJOURNAL = {Annals of Mathematics. Second Series},
    VOLUME = {96},
      YEAR = {1972},
     PAGES = {296--317},
      ISSN = {0003-486X},
   MRCLASS = {22E40 (53C30)},
  MRNUMBER = {302822},
MRREVIEWER = {J.\ A.\ Wolf},
       DOI = {10.2307/1970790},
       URL = {https://doi.org/10.2307/1970790},
}

@article {MR1960120,
    AUTHOR = {Prasad, Gopal and Rapinchuk, Andrei S.},
     TITLE = {Existence of irreducible {$\Bbb R$}-regular elements in
              {Z}ariski-dense subgroups},
   JOURNAL = {Math. Res. Lett.},
  FJOURNAL = {Mathematical Research Letters},
    VOLUME = {10},
      YEAR = {2003},
    NUMBER = {1},
     PAGES = {21--32},
      ISSN = {1073-2780},
   MRCLASS = {20G20},
  MRNUMBER = {1960120},
MRREVIEWER = {E.\ A.\ Tevel\"ev},
       DOI = {10.4310/MRL.2003.v10.n1.a3},
       URL = {https://doi.org/10.4310/MRL.2003.v10.n1.a3},
}

@article {MR3663601,
    AUTHOR = {Louder, Larsen and McReynolds, D. B. and Patel, Priyam},
     TITLE = {Zariski closures and subgroup separability},
   JOURNAL = {Selecta Math. (N.S.)},
  FJOURNAL = {Selecta Mathematica. New Series},
    VOLUME = {23},
      YEAR = {2017},
    NUMBER = {3},
     PAGES = {2019--2027},
      ISSN = {1022-1824,1420-9020},
   MRCLASS = {20E05 (20E26)},
  MRNUMBER = {3663601},
MRREVIEWER = {David\ I.\ Moldavanski\u i},
       DOI = {10.1007/s00029-016-0300-8},
       URL = {https://doi.org/10.1007/s00029-016-0300-8},
}

@article{zbMATH03152734,
 author = {Berger, Marcel},
 title = {Les espaces sym{\'e}triques non compacts},
 fjournal = {Annales Scientifiques de l'{\'E}cole Normale Sup{\'e}rieure. Troisi{\`e}me S{\'e}rie},
 journal = {Ann. Sci. {\'E}c. Norm. Sup{\'e}r. (3)},
 issn = {0012-9593},
 volume = {74},
 pages = {85--177},
 year = {1957},
 language = {French},
 doi = {10.24033/asens.1054},
 url = {https://eudml.org/doc/81724},
 zbMATH = {3152734},
 Zbl = {0093.35602}
}

@misc{arXiv:2105.06897,
 author = {Belolipetsky, Mikhail and Bogachev, Nikolay and Kolpakov, Alexander and Slavich, Leone},
 title = {Subspace stabilisers in hyperbolic lattices},
 year = {2025},
 howpublished = {Preprint, {arXiv}:2105.06897 [math.{GT}] (2025)},
 url = {https://arxiv.org/abs/2105.06897},
 arXiv = {arXiv:2105.06897}
}

@incollection {MR2655311,
    AUTHOR = {Benoist, Yves},
     TITLE = {Five lectures on lattices in semisimple {L}ie groups},
 BOOKTITLE = {G\'eom\'etries \`a{} courbure n\'egative ou nulle, groupes
              discrets et rigidit\'es},
    SERIES = {S\'emin. Congr.},
    VOLUME = {18},
     PAGES = {117--176},
 PUBLISHER = {Soc. Math. France, Paris},
      YEAR = {2009},
      ISBN = {978-2-85629-240-2},
   MRCLASS = {22E40 (11F06 20H10)},
  MRNUMBER = {2655311},
MRREVIEWER = {Dave\ Witte\ Morris},
}

@article {MR3708964,
    AUTHOR = {Parkkonen, Jouni and Paulin, Fr\'ed\'eric},
     TITLE = {A classification of {$\Bbb C$}-{F}uchsian subgroups of
              {P}icard modular groups},
   JOURNAL = {Math. Scand.},
  FJOURNAL = {Mathematica Scandinavica},
    VOLUME = {121},
      YEAR = {2017},
    NUMBER = {1},
     PAGES = {57--74},
      ISSN = {0025-5521,1903-1807},
   MRCLASS = {11F06 (11R52)},
  MRNUMBER = {3708964},
MRREVIEWER = {Matthew\ Stover},
       DOI = {10.7146/math.scand.a-26128},
       URL = {https://doi.org/10.7146/math.scand.a-26128},
}

@article {MR3705536,
    AUTHOR = {Chinburg, Ted and Stover, Matthew},
     TITLE = {Geodesic curves on {S}himura surfaces},
   JOURNAL = {Topology Proc.},
  FJOURNAL = {Topology Proceedings},
    VOLUME = {52},
      YEAR = {2018},
     PAGES = {113--121},
      ISSN = {0146-4124,2331-1290},
   MRCLASS = {20H10 (11F06 14G35)},
  MRNUMBER = {3705536},
MRREVIEWER = {Christian\ Frederik\ Wei\ss},
}

@misc{arXiv:math/0612290,
 author = {Agol, Ian},
 title = {Systoles of hyperbolic 4-manifolds},
 year = {2006},
 howpublished = {Preprint, {arXiv}:math/0612290 [math.{GT}] (2006)},
 url = {https://arxiv.org/abs/math/0612290},
 arXiv = {arXiv:math/0612290}
}

@article {MR2979855,
    AUTHOR = {Haglund, Fr\'ed\'eric and Wise, Daniel T.},
     TITLE = {A combination theorem for special cube complexes},
   JOURNAL = {Ann. of Math. (2)},
  FJOURNAL = {Annals of Mathematics. Second Series},
    VOLUME = {176},
      YEAR = {2012},
    NUMBER = {3},
     PAGES = {1427--1482},
      ISSN = {0003-486X,1939-8980},
   MRCLASS = {20F67 (20E06 20E26)},
  MRNUMBER = {2979855},
MRREVIEWER = {Yago\ Antol\'in},
       DOI = {10.4007/annals.2012.176.3.2},
       URL = {https://doi.org/10.4007/annals.2012.176.3.2},
}

@Article{Wehrfritz,
 Author = {Wehrfritz, B. A. F.},
 Title = {Generalized free products of linear groups},
 FJournal = {Proceedings of the London Mathematical Society. Third Series},
 Journal = {Proc. Lond. Math. Soc. (3)},
 ISSN = {0024-6115},
 Volume = {27},
 Pages = {402--424},
 Year = {1973},
 Language = {English},
 DOI = {10.1112/plms/s3-27.3.402},
 zbMATH = {3419405},
 Zbl = {0266.20052}
}

@article {MR2417445,
    AUTHOR = {Baker, Mark and Cooper, Daryl},
     TITLE = {A combination theorem for convex hyperbolic manifolds, with
              applications to surfaces in 3-manifolds},
   JOURNAL = {J. Topol.},
  FJOURNAL = {Journal of Topology},
    VOLUME = {1},
      YEAR = {2008},
    NUMBER = {3},
     PAGES = {603--642},
      ISSN = {1753-8416,1753-8424},
   MRCLASS = {57M50 (30F40)},
  MRNUMBER = {2417445},
MRREVIEWER = {David\ A.\ Dumas},
       DOI = {10.1112/jtopol/jtn013},
       URL = {https://doi.org/10.1112/jtopol/jtn013},
}

@book {MR240238,
    AUTHOR = {Bourbaki, N.},
     TITLE = {\'El\'ements de math\'ematique. {F}asc. {XXXIV}. {G}roupes et
              alg\`ebres de {L}ie. {C}hapitre {IV}: {G}roupes de {C}oxeter
              et syst\`emes de {T}its. {C}hapitre {V}: {G}roupes engendr\'es
              par des r\'eflexions. {C}hapitre {VI}: syst\`emes de racines},
    SERIES = {Actualit\'es Scientifiques et Industrielles [Current
              Scientific and Industrial Topics]},
    VOLUME = {No. 1337},
 PUBLISHER = {Hermann, Paris},
      YEAR = {1968},
     PAGES = {288 pp. (loose errata)},
   MRCLASS = {22.50 (17.00)},
  MRNUMBER = {240238},
MRREVIEWER = {G.\ B.\ Seligman},
}

@article {MR302779,
    AUTHOR = {Vinberg, \`E.\ B.},
     TITLE = {Discrete linear groups that are generated by reflections},
   JOURNAL = {Izv. Akad. Nauk SSSR Ser. Mat.},
  FJOURNAL = {Izvestiya Akademii Nauk SSSR. Seriya Matematicheskaya},
    VOLUME = {35},
      YEAR = {1971},
     PAGES = {1072--1112},
      ISSN = {0373-2436},
   MRCLASS = {20H15 (50B30)},
  MRNUMBER = {302779},
MRREVIEWER = {J.\ C.\ Fisher},
}

@article {MR2130566,
    AUTHOR = {Potyagailo, Leonid and Vinberg, Ernest},
     TITLE = {On right-angled reflection groups in hyperbolic spaces},
   JOURNAL = {Comment. Math. Helv.},
  FJOURNAL = {Commentarii Mathematici Helvetici. A Journal of the Swiss
              Mathematical Society},
    VOLUME = {80},
      YEAR = {2005},
    NUMBER = {1},
     PAGES = {63--73},
      ISSN = {0010-2571,1420-8946},
   MRCLASS = {20F55 (51F15 57M07 57M50)},
  MRNUMBER = {2130566},
MRREVIEWER = {Ruth\ Kellerhals},
       DOI = {10.4171/CMH/4},
       URL = {https://doi.org/10.4171/CMH/4},
}

@misc{arXiv:2605.21734,
 author = {Changqian Li},
 title = {Virtual specialness of the double},
 year = {2026},
 howpublished = {Preprint, {arXiv}:2605.21734 [math.{GR}] (2026)},
 url = {https://arxiv.org/abs/2605.21734},
 arXiv = {arXiv:2605.21734}
}

@article {MR4693936,
    AUTHOR = {Huang, Jingyin and Wise, Daniel T.},
     TITLE = {Virtual specialness of certain graphs of special cube
              complexes},
   JOURNAL = {Math. Ann.},
  FJOURNAL = {Mathematische Annalen},
    VOLUME = {388},
      YEAR = {2024},
    NUMBER = {1},
     PAGES = {329--357},
      ISSN = {0025-5831,1432-1807},
   MRCLASS = {55U10 (05E45 20F65)},
  MRNUMBER = {4693936},
MRREVIEWER = {Sam\ Shepherd},
       DOI = {10.1007/s00208-022-02527-0},
       URL = {https://doi.org/10.1007/s00208-022-02527-0},
}

@misc{danciger2024combination,
 author = {Jeffrey Danciger and Fran{\c{c}}ois Gu{\'e}ritaud and Fanny Kassel},
 title = {Combination theorems in convex projective geometry},
 year = {2025},
 howpublished = {Preprint, {arXiv}:2407.09439 [math.{GR}] (2025)},
 url = {https://arxiv.org/abs/2407.09439},
 arXiv = {arXiv:2407.09439}
}

@article {MR2587434,
    AUTHOR = {Manning, Jason Fox and Mart\'inez-Pedroza, Eduardo},
     TITLE = {Separation of relatively quasiconvex subgroups},
   JOURNAL = {Pacific J. Math.},
  FJOURNAL = {Pacific Journal of Mathematics},
    VOLUME = {244},
      YEAR = {2010},
    NUMBER = {2},
     PAGES = {309--334},
      ISSN = {0030-8730,1945-5844},
   MRCLASS = {20F67 (20E26 57M50)},
  MRNUMBER = {2587434},
MRREVIEWER = {Vassilis\ Metaftsis},
       DOI = {10.2140/pjm.2010.244.309},
       URL = {https://doi.org/10.2140/pjm.2010.244.309},
}

@book {MR4298722,
    AUTHOR = {Wise, Daniel T.},
     TITLE = {The structure of groups with a quasiconvex hierarchy},
    SERIES = {Annals of Mathematics Studies},
    VOLUME = {209},
 PUBLISHER = {Princeton University Press, Princeton, NJ},
      YEAR = {[2021] \copyright 2021},
     PAGES = {x+357},
      ISBN = {[9780691170442]; [9780691170459]; [9780691213507]},
   MRCLASS = {20F65 (20F67)},
  MRNUMBER = {4298722},
MRREVIEWER = {Anthony\ Genevois},
}

@article {HW08,
    AUTHOR = {Haglund, Fr\'ed\'eric and Wise, Daniel T.},
     TITLE = {Special cube complexes},
   JOURNAL = {Geom. Funct. Anal.},
  FJOURNAL = {Geometric and Functional Analysis},
    VOLUME = {17},
      YEAR = {2008},
    NUMBER = {5},
     PAGES = {1551--1620},
      ISSN = {1016-443X,1420-8970},
   MRCLASS = {20F36 (20F55 20F67)},
  MRNUMBER = {2377497},
MRREVIEWER = {Patrick\ Bahls},
       DOI = {10.1007/s00039-007-0629-4},
       URL = {https://doi.org/10.1007/s00039-007-0629-4},
}

@Article{GS,
 Author = {Gromov, Mikhail and Schoen, Richard},
 Title = {Harmonic maps into singular spaces and {{\(p\)}}-adic superrigidity for lattices in groups of rank one},
 FJournal = {Publications Math{\'e}matiques},
 Journal = {Publ. Math., Inst. Hautes {\'E}tud. Sci.},
 ISSN = {0073-8301},
 Volume = {76},
 Pages = {165--246},
 Year = {1992},
 Language = {English},
 DOI = {10.1007/BF02699433},
 zbMATH = {205186},
 Zbl = {0896.58024}
}

@article {MR2821431,
    AUTHOR = {Belolipetsky, Mikhail V. and Thomson, Scott A.},
     TITLE = {Systoles of hyperbolic manifolds},
   JOURNAL = {Algebr. Geom. Topol.},
  FJOURNAL = {Algebraic \& Geometric Topology},
    VOLUME = {11},
      YEAR = {2011},
    NUMBER = {3},
     PAGES = {1455--1469},
      ISSN = {1472-2747,1472-2739},
   MRCLASS = {53C23 (57M50)},
  MRNUMBER = {2821431},
MRREVIEWER = {Mikhail\ G.\ Katz},
       DOI = {10.2140/agt.2011.11.1455},
       URL = {https://doi.org/10.2140/agt.2011.11.1455},
}

@book{On-Vin,
  title={Lie groups and algebraic groups},
  author={Onishchik, Arkadij and Vinberg, Ernest},
  year={2012},
  publisher={Springer Science \& Business Media}
}

@article {MR4307692,
    AUTHOR = {Minasyan, Ashot},
     TITLE = {Virtual retraction properties in groups},
   JOURNAL = {Int. Math. Res. Not. IMRN},
  FJOURNAL = {International Mathematics Research Notices. IMRN},
      YEAR = {2021},
    NUMBER = {17},
     PAGES = {13434--13477},
      ISSN = {1073-7928,1687-0247},
   MRCLASS = {20E26 (20E06 20E07 20E34)},
  MRNUMBER = {4307692},
MRREVIEWER = {Motiejus\ Valiunas},
       DOI = {10.1093/imrn/rnz249},
       URL = {https://doi.org/10.1093/imrn/rnz249},
}

@article {MR3001608,
    AUTHOR = {Namazi, Hossein and Souto, Juan},
     TITLE = {Non-realizability and ending laminations: proof of the density
              conjecture},
   JOURNAL = {Acta Math.},
  FJOURNAL = {Acta Mathematica},
    VOLUME = {209},
      YEAR = {2012},
    NUMBER = {2},
     PAGES = {323--395},
      ISSN = {0001-5962,1871-2509},
   MRCLASS = {30F40 (20H10 57M50)},
  MRNUMBER = {3001608},
MRREVIEWER = {Majid\ Heydarpour},
       DOI = {10.1007/s11511-012-0088-0},
       URL = {https://doi.org/10.1007/s11511-012-0088-0},
}

@article {MR2925381,
    AUTHOR = {Brock, Jeffrey F. and Canary, Richard D. and Minsky, Yair N.},
     TITLE = {The classification of {K}leinian surface groups, {II}: {T}he
              ending lamination conjecture},
   JOURNAL = {Ann. of Math. (2)},
  FJOURNAL = {Annals of Mathematics. Second Series},
    VOLUME = {176},
      YEAR = {2012},
    NUMBER = {1},
     PAGES = {1--149},
      ISSN = {0003-486X,1939-8980},
   MRCLASS = {57M50 (30F40)},
  MRNUMBER = {2925381},
MRREVIEWER = {Athanase\ Papadopoulos},
       DOI = {10.4007/annals.2012.176.1.1},
       URL = {https://doi.org/10.4007/annals.2012.176.1.1},
}

@article {MR2821565,
    AUTHOR = {Ohshika, Ken'ichi},
     TITLE = {Realising end invariants by limits of minimally parabolic,
              geometrically finite groups},
   JOURNAL = {Geom. Topol.},
  FJOURNAL = {Geometry \& Topology},
    VOLUME = {15},
      YEAR = {2011},
    NUMBER = {2},
     PAGES = {827--890},
      ISSN = {1465-3060,1364-0380},
   MRCLASS = {57M50 (30F40)},
  MRNUMBER = {2821565},
MRREVIEWER = {Bruno\ P.\ Zimmermann},
       DOI = {10.2140/gt.2011.15.827},
       URL = {https://doi.org/10.2140/gt.2011.15.827},
}

@article {MR860677,
    AUTHOR = {Brooks, Robert},
     TITLE = {Circle packings and co-compact extensions of {K}leinian
              groups},
   JOURNAL = {Invent. Math.},
  FJOURNAL = {Inventiones Mathematicae},
    VOLUME = {86},
      YEAR = {1986},
    NUMBER = {3},
     PAGES = {461--469},
      ISSN = {0020-9910,1432-1297},
   MRCLASS = {32G15 (30F40 52A45)},
  MRNUMBER = {860677},
MRREVIEWER = {Tadashi\ Kuroda},
       DOI = {10.1007/BF01389263},
       URL = {https://doi.org/10.1007/BF01389263},
}

@article {MR2079598,
    AUTHOR = {Brock, Jeffrey F. and Bromberg, Kenneth W.},
     TITLE = {On the density of geometrically finite {K}leinian groups},
   JOURNAL = {Acta Math.},
  FJOURNAL = {Acta Mathematica},
    VOLUME = {192},
      YEAR = {2004},
    NUMBER = {1},
     PAGES = {33--93},
      ISSN = {0001-5962,1871-2509},
   MRCLASS = {57M50 (30F40)},
  MRNUMBER = {2079598},
MRREVIEWER = {Lee\ Mosher},
       DOI = {10.1007/BF02441085},
       URL = {https://doi.org/10.1007/BF02441085},
}

@article {MR2980525,
    AUTHOR = {Chesebro, Eric and DeBlois, Jason and Wilton, Henry},
     TITLE = {Some virtually special hyperbolic 3-manifold groups},
   JOURNAL = {Comment. Math. Helv.},
  FJOURNAL = {Commentarii Mathematici Helvetici. A Journal of the Swiss
              Mathematical Society},
    VOLUME = {87},
      YEAR = {2012},
    NUMBER = {3},
     PAGES = {727--787},
      ISSN = {0010-2571,1420-8946},
   MRCLASS = {57M10 (20E26 20F55)},
  MRNUMBER = {2980525},
MRREVIEWER = {Riikka\ Kangaslampi},
       DOI = {10.4171/CMH/267},
       URL = {https://doi.org/10.4171/CMH/267},
}

@article {MR2684983,
    AUTHOR = {Hruska, G. Christopher},
     TITLE = {Relative hyperbolicity and relative quasiconvexity for
              countable groups},
   JOURNAL = {Algebr. Geom. Topol.},
  FJOURNAL = {Algebraic \& Geometric Topology},
    VOLUME = {10},
      YEAR = {2010},
    NUMBER = {3},
     PAGES = {1807--1856},
      ISSN = {1472-2747,1472-2739},
   MRCLASS = {20F65 (20F67)},
  MRNUMBER = {2684983},
MRREVIEWER = {Eduardo\ Mart\'inez-Pedroza},
       DOI = {10.2140/agt.2010.10.1807},
       URL = {https://doi.org/10.2140/agt.2010.10.1807},
}

@book {MR3307755,
    AUTHOR = {Morris, Dave Witte},
     TITLE = {Introduction to arithmetic groups},
 PUBLISHER = {Deductive Press, [place of publication not identified]},
      YEAR = {2015},
     PAGES = {xii+475},
      ISBN = {978-0-9865716-0-2; 978-0-9865716-1-9},
   MRCLASS = {22E40 (11F70 20G20 20G30 22D10 22D40)},
  MRNUMBER = {3307755},
MRREVIEWER = {S.\ G.\ Dani},
}

@article{Nisnevich,
  title={On groups isomorphically representable by matrices over a commutative field},
  author={Nisnevich, VL},
  journal={Mat. Sb},
  volume={8},
  pages={395--403},
  year={1940}
}

@article {MR3104553,
    AUTHOR = {Agol, Ian},
     TITLE = {The virtual {H}aken conjecture},
      NOTE = {With an appendix by Agol, Daniel Groves, and Jason Manning},
   JOURNAL = {Doc. Math.},
  FJOURNAL = {Documenta Mathematica},
    VOLUME = {18},
      YEAR = {2013},
     PAGES = {1045--1087},
      ISSN = {1431-0635,1431-0643},
   MRCLASS = {20F67 (57Mxx)},
  MRNUMBER = {3104553},
MRREVIEWER = {Thomas\ Koberda},
       URL = {https://elibm.org/article/10000267},
}

@article {MR587545,
    AUTHOR = {Leung, D. S. P.},
     TITLE = {Reflective submanifolds. {III}. {C}ongruency of isometric
              reflective submanifolds and corrigenda to the classification
              of reflective submanifolds},
   JOURNAL = {J. Differential Geometry},
  FJOURNAL = {Journal of Differential Geometry},
    VOLUME = {14},
      YEAR = {1979},
    NUMBER = {2},
     PAGES = {167--177},
      ISSN = {0022-040X,1945-743X},
   MRCLASS = {53C35 (53C40)},
  MRNUMBER = {587545},
MRREVIEWER = {J.\ Szenthe},
       URL = {http://projecteuclid.org/euclid.jdg/1214434966},
}

@article {MR367873,
    AUTHOR = {Leung, Dominic S. P.},
     TITLE = {On the classification of reflective submanifolds of
              {R}iemannian symmetric spaces},
   JOURNAL = {Indiana Univ. Math. J.},
  FJOURNAL = {Indiana University Mathematics Journal},
    VOLUME = {24},
      YEAR = {1974/75},
     PAGES = {327--339},
      ISSN = {0022-2518,1943-5258},
   MRCLASS = {53C35},
  MRNUMBER = {367873},
MRREVIEWER = {J.\ Szenthe},
       DOI = {10.1512/iumj.1974.24.24029},
       URL = {https://doi.org/10.1512/iumj.1974.24.24029},
}

@MISC {agol2018hyperbolic,
    TITLE = "{Hyperbolic $3$-manifold groups that embed in compact Lie groups}",
    AUTHOR = {Agol, Ian},
    year={2018},
    HOWPUBLISHED = {MathOverflow},
    NOTE = {URL:https://mathoverflow.net/q/315430 (version: 2018-11-16)},
    EPRINT = {https://mathoverflow.net/q/315430},
    URL = {https://mathoverflow.net/q/315430}
}

@article {MR549966,
    AUTHOR = {Kneser, Martin},
     TITLE = {Normalteiler ganzzahliger {S}pingruppen},
   JOURNAL = {J. Reine Angew. Math.},
  FJOURNAL = {Journal f\"ur die Reine und Angewandte Mathematik. [Crelle's
              Journal]},
    VOLUME = {311/312},
      YEAR = {1979},
     PAGES = {191--214},
      ISSN = {0075-4102,1435-5345},
   MRCLASS = {20G30 (10C30 20H05 22E40)},
  MRNUMBER = {549966},
MRREVIEWER = {Carl\ Riehm},
       DOI = {10.1515/crll.1979.311-312.191},
       URL = {https://doi.org/10.1515/crll.1979.311-312.191},
}

@article{zbMATH03182231,
 author = {Baumslag, G.},
 title = {On the residual finiteness of generalized free products of nilpotent groups},
 fjournal = {Transactions of the American Mathematical Society},
 journal = {Trans. Am. Math. Soc.},
 issn = {0002-9947},
 volume = {106},
 pages = {193--209},
 year = {1963},
 language = {English},
 doi = {10.2307/1993762},
 zbMATH = {3182231},
 Zbl = {0112.25904}
}

@article {MR4862320,
    AUTHOR = {Tholozan, Nicolas and Tsouvalas, Konstantinos},
     TITLE = {Residually finite non-linear hyperbolic groups},
   JOURNAL = {Comment. Math. Helv.},
  FJOURNAL = {Commentarii Mathematici Helvetici. A Journal of the Swiss
              Mathematical Society},
    VOLUME = {100},
      YEAR = {2025},
    NUMBER = {1},
     PAGES = {1--9},
      ISSN = {0010-2571,1420-8946},
   MRCLASS = {20F67 (20E26 22E40)},
  MRNUMBER = {4862320},
MRREVIEWER = {Monika\ Kudlinska},
       DOI = {10.4171/cmh/581},
       URL = {https://doi.org/10.4171/cmh/581},
}

@article {MR4653708,
    AUTHOR = {Chong, Hip Kuen and Wise, Daniel T.},
     TITLE = {Continuously many quasi-isometry classes of residually finite
              groups},
   JOURNAL = {Glasg. Math. J.},
  FJOURNAL = {Glasgow Mathematical Journal},
    VOLUME = {65},
      YEAR = {2023},
    NUMBER = {3},
     PAGES = {569--572},
      ISSN = {0017-0895,1469-509X},
   MRCLASS = {20E26 (20F06)},
  MRNUMBER = {4653708},
MRREVIEWER = {Jack\ O.\ Button},
       DOI = {10.1017/s0017089523000137},
       URL = {https://doi.org/10.1017/s0017089523000137},
}

@article {MR422501,
    AUTHOR = {Millson, John J.},
     TITLE = {On the first {B}etti number of a constant negatively curved
              manifold},
   JOURNAL = {Ann. of Math. (2)},
  FJOURNAL = {Annals of Mathematics. Second Series},
    VOLUME = {104},
      YEAR = {1976},
    NUMBER = {2},
     PAGES = {235--247},
      ISSN = {0003-486X},
   MRCLASS = {22E40},
  MRNUMBER = {422501},
MRREVIEWER = {\`E.\ Vinberg},
       DOI = {10.2307/1971046},
       URL = {https://doi.org/10.2307/1971046},
}

@article {MR4388367,
    AUTHOR = {Chong, Hip Kuen and Wise, Daniel T.},
     TITLE = {An uncountable family of finitely generated residually finite
              groups},
   JOURNAL = {J. Group Theory},
  FJOURNAL = {Journal of Group Theory},
    VOLUME = {25},
      YEAR = {2022},
    NUMBER = {2},
     PAGES = {207--216},
      ISSN = {1433-5883,1435-4446},
   MRCLASS = {20E26 (20E22 20F05)},
  MRNUMBER = {4388367},
MRREVIEWER = {Valeriy\ G.\ Bardakov},
       DOI = {10.1515/jgth-2021-0094},
       URL = {https://doi.org/10.1515/jgth-2021-0094},
}

@article {MR764305,
    AUTHOR = {Grigorchuk, R. I.},
     TITLE = {Degrees of growth of finitely generated groups and the theory
              of invariant means},
   JOURNAL = {Izv. Akad. Nauk SSSR Ser. Mat.},
  FJOURNAL = {Izvestiya Akademii Nauk SSSR. Seriya Matematicheskaya},
    VOLUME = {48},
      YEAR = {1984},
    NUMBER = {5},
     PAGES = {939--985},
      ISSN = {0373-2436},
   MRCLASS = {20F05 (43A07)},
  MRNUMBER = {764305},
MRREVIEWER = {P.\ Gerl},
}

@article {MR2776645,
    AUTHOR = {Bergeron, Nicolas and Haglund, Fr\'ed\'eric and Wise, Daniel
              T.},
     TITLE = {Hyperplane sections in arithmetic hyperbolic manifolds},
   JOURNAL = {J. Lond. Math. Soc. (2)},
  FJOURNAL = {Journal of the London Mathematical Society. Second Series},
    VOLUME = {83},
      YEAR = {2011},
    NUMBER = {2},
     PAGES = {431--448},
      ISSN = {0024-6107,1469-7750},
   MRCLASS = {57M50 (11F75 53C23)},
  MRNUMBER = {2776645},
MRREVIEWER = {Pablo\ Su\'arez-Serrato},
       DOI = {10.1112/jlms/jdq082},
       URL = {https://doi.org/10.1112/jlms/jdq082},
}

@book {MR559927,
    AUTHOR = {Jacobson, Nathan},
     TITLE = {Lie algebras},
      NOTE = {Republication of the 1962 original},
 PUBLISHER = {Dover Publications, Inc., New York},
      YEAR = {1979},
     PAGES = {ix+331},
      ISBN = {0-486-63832-4},
   MRCLASS = {17-01},
  MRNUMBER = {559927},
}

@article {MR1769939,
    AUTHOR = {Bergeron, Nicolas},
     TITLE = {Premier nombre de {B}etti et spectre du laplacien de certaines
              vari\'et\'es hyperboliques},
   JOURNAL = {Enseign. Math. (2)},
  FJOURNAL = {L'Enseignement Math\'ematique. Revue Internationale. 2e
              S\'erie},
    VOLUME = {46},
      YEAR = {2000},
    NUMBER = {1-2},
     PAGES = {109--137},
      ISSN = {0013-8584},
   MRCLASS = {58J53 (53C20 53C22 53C23 57M50)},
  MRNUMBER = {1769939},
MRREVIEWER = {Athanase\ Papadopoulos},
}

@misc{arXiv:math/0405568,
 author = {Agol, Ian},
 title = {Tameness of hyperbolic 3-manifolds},
 year = {2004},
 howpublished = {Preprint, {arXiv}:math/0405568 [math.{GT}] (2004)},
 url = {https://arxiv.org/abs/math/0405568},
 arXiv = {arXiv:math/0405568}
}

@article {MR2188131,
    AUTHOR = {Calegari, Danny and Gabai, David},
     TITLE = {Shrinkwrapping and the taming of hyperbolic 3-manifolds},
   JOURNAL = {J. Amer. Math. Soc.},
  FJOURNAL = {Journal of the American Mathematical Society},
    VOLUME = {19},
      YEAR = {2006},
    NUMBER = {2},
     PAGES = {385--446},
      ISSN = {0894-0347,1088-6834},
   MRCLASS = {57M50 (30F40 57N10)},
  MRNUMBER = {2188131},
MRREVIEWER = {Bruno\ P.\ Zimmermann},
       DOI = {10.1090/S0894-0347-05-00513-8},
       URL = {https://doi.org/10.1090/S0894-0347-05-00513-8},
}

@article {MR1396777,
    AUTHOR = {Canary, Richard D.},
     TITLE = {A covering theorem for hyperbolic {$3$}-manifolds and its
              applications},
   JOURNAL = {Topology},
  FJOURNAL = {Topology. An International Journal of Mathematics},
    VOLUME = {35},
      YEAR = {1996},
    NUMBER = {3},
     PAGES = {751--778},
      ISSN = {0040-9383},
   MRCLASS = {57M50 (30F40 57N10)},
  MRNUMBER = {1396777},
MRREVIEWER = {Ken-ichi\ Ohshika},
       DOI = {10.1016/0040-9383(94)00055-7},
       URL = {https://doi.org/10.1016/0040-9383(94)00055-7},
}

@article {MR898729,
    AUTHOR = {Long, D. D.},
     TITLE = {Immersions and embeddings of totally geodesic surfaces},
   JOURNAL = {Bull. London Math. Soc.},
  FJOURNAL = {The Bulletin of the London Mathematical Society},
    VOLUME = {19},
      YEAR = {1987},
    NUMBER = {5},
     PAGES = {481--484},
      ISSN = {0024-6093,1469-2120},
   MRCLASS = {57N10},
  MRNUMBER = {898729},
MRREVIEWER = {G.\ Peter\ Scott},
       DOI = {10.1112/blms/19.5.481},
       URL = {https://doi.org/10.1112/blms/19.5.481},
}

@book{Humphreys,
  title={Linear algebraic groups},
  author={Humphreys, James E},
  volume={21},
  year={2012},
  publisher={Springer Science \& Business Media}
}

\end{document}